\documentclass[12pt]{amsart}

\usepackage{amssymb, amscd, amsmath, amsthm, epsf, epsfig, latexsym,psfrag}

\usepackage{pb-diagram}
\usepackage{mathrsfs}

   \textwidth 6.3in
  \evensidemargin .15in
    \oddsidemargin .15in

  \textheight 8.5in
   \headheight .1in




\def\doub{D}
\def\Z{\mathbb{Z}}
\def\R{\mathbb{R}}
\def\Q{\mathbb{Q}}
\def\C{\mathbb{C}}

\def\calm{\mathcal{M}}

\newcommand{\Hrest}{H_1}

\newcommand{\sss}[1]{{\scriptscriptstyle{#1}}}
\newcommand{\branch}{\Sigma}
\newcommand{\inst}{$\mathscr{I}$}
\newcommand{\surg}[3]{S^3_{\scriptscriptstyle\frac{#1}{#2}}(#3)}
\newcommand{\os}{Ozsv{\'a}th-Szab{\'o}}

\newcommand{\sign}{\operatorname{sign}}
\newcommand{\lineb}{{L}_e}
\newcommand{\modE}{\mathcal{M}(W,e)}

\newcommand{\Tr}{\operatorname{Tr}}
\newcommand{\Ind}{\operatorname{Ind}}

\newcommand{\Sing}{\operatorname{Sing}}

\newcommand\cs{{\operatorname{cs}}}
\newcommand\gl{{\operatorname{gl}}}


\newtheorem{theorem}{Theorem}[section]
\newtheorem{lemma}[theorem]{Lemma}
\newtheorem{corollary}[theorem]{Corollary}
\newtheorem{prop}[theorem]{Proposition}

\theoremstyle{definition}
\newtheorem{definition}[theorem]{Definition}

\numberwithin{equation}{section}


\begin{document}

\title{Instantons, Concordance,  and Whitehead doubling }

\author{Matthew Hedden}
\author{Paul Kirk} 
 
\thanks{This work was supported in part by the National Science Foundation under Grants  0604310, 0706979, and 0906258}
 
\address{Paul Kirk: Department of Mathematics, Indiana University, Bloomington, IN 47405  }\email{pkirk@indiana.edu}
 \address{Matthew Hedden: Department of Mathematics, Michigan State University, MI 48824 }
\email{mhedden@math.msu.edu}

\keywords{knot, concordance, Chern-Simons, instanton, Whitehead double}
 \date{\today}

\begin{abstract} We use moduli spaces of instantons and Chern-Simons invariants of flat connections to prove that the Whitehead doubles of $(2,2^n-1)$ torus knots are independent in the smooth knot concordance group; that is, they freely generate a subgroup of infinite rank.  \end{abstract}

\maketitle
 
\section{Introduction}

Call two knots in the 3-sphere {\em concordant} if they arise as the boundary of a smooth and properly embedded cylinder in the 3-sphere times an interval.   Concordance is clearly an equivalence relation.  Modulo this relation, the set of knots forms an abelian group $\mathcal{C}$, with the role of addition played by connected sum and inverse given by considering the mirror image, with reversed orientation.  This concordance group is a much studied object, well motivated by its role as a gateway into the mysterious world of 4-dimensional topology. Indeed, even in this relative situation of studying 3-dimensional manifolds in relation to the 4-dimensional manifolds they bound, one can observe the distinction between the smooth and topological categories in dimension four.  Moreover, the group structure afforded by passing to concordance paves a clearer path through the often intractable field of knot theory.   

Our results focus on a particular satellite operation, (positive, untwisted) Whitehead doubling, and its effect on the concordance group, see Figure \ref{figtref} and Section \ref{subsec:prelim} for a definition.   Our motivation comes from the following conjecture.  To state it recall that a knot is {\em slice} if it is concordant to the unknot or, equivalently, if it bounds a smooth and properly embedded disk in the 4-ball.

\vskip.1in

\noindent {\bf Conjecture} \cite{Kirby}: The Whitehead double of a knot $K$  is slice if and only if $K$ is slice.
\vskip.1in

This conjecture has received considerable attention in the $30$ years since it was stated.   Part of this interest is explained by the fact that it is resoundingly false in the topological category.   To understand this, note that one can define {\em topological} concordance and sliceness  by considering  flat embeddings; that is,  topological embeddings of a cylinder times a disk.  In this context, Freedman's results on 4-dimensional surgery theory show that the untwisted Whitehead double of any knot (and, more generally, any knot with Alexander polynomial one) is topologically slice \cite{Freedman}.   In contrast, it was first observed by Akbulut \cite{Akbulutconf} that  Donaldson's theorem on the diagonalizability of definite intersection forms for closed smooth 4-manifolds implies that the Whitehead double of the trefoil is not smoothly slice.   Thus the conjecture highlights the remarkable distinction between the categories.

\begin{figure}[h]
 
\begin{center}
 \includegraphics[width=150pt]{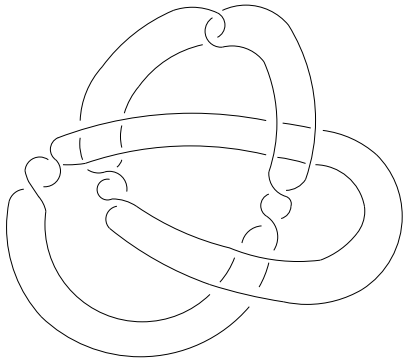}
\caption{ $\doub(T_{2,3})$, the positive untwisted Whitehead double of the right-handed trefoil, the (2,3) torus knot. \label{figtref} }
\end{center}
\end{figure}

Since the initial example, our knowledge of the conjecture in the smooth setting has considerably expanded \cite{Cochran-Gompf,Akbulut, Rudolph,Rudolph1995,LN2006, MO, Doubling}.  These results have all been aimed at extending the family of knots for which the conjecture holds.  With the exception of some recent proofs of prior results using Rasmussen's concordance invariant, all efforts have relied on the analytic techniques of gauge theory or Floer homology.  To date, the widest class of knots for which the conjecture holds are those whose Ozsv{\'a}th-Szab{\'o} concordance invariant is positive \cite{FourBall,Doubling}.  This class includes, for instance, the Whitehead double of any of its members, so the iterated doubles of any knot in this family are not not slice.  

 The purpose of this article is to begin investigation into a generalization of the conjecture.  To state it, observe that one direction of the conjecture is clear: if a knot is slice, then its Whitehead double is also slice.  Indeed, given a concordance between knots $K$ and $J$, the Whitehead doubles of $K$ and $J$ will be concordant by a cylinder which ``follows along" the original cylinder.   This shows that Whitehead doubling, or any satellite operation, induces a function
$$ D:  \mathcal{C} \rightarrow \mathcal{C},$$
from the concordance group to itself.  Although far from a homomorphism, one can ask about the structure of this function.  In particular, the conjecture is equivalent to the statement ``$D^{-1}( 0 ) = 0.$" 

 Let us call a set of  knots {\em independent} if every $N$ element subset generates a free abelian subgroup of $\mathcal{C}$ of rank $N$.    We can generalize the conjecture above. \vskip.1in

\noindent {\bf Conjecture}:    Whitehead doubling preserves independence in the concordance group.
\vskip.1in

A natural test case for the conjecture is provided by positive torus knots.  These knots can be seen to be independent by way of classical signature invariants \cite{Litherland}.  \newpage

Our primary result verifies the conjecture  for an infinite collection of torus knots.  

\vskip.1in

\noindent{\bf Theorem  1.} {\em \ The Whitehead doubles of the $(2, 2^n-1)$ torus knots, where $n=2,3,  \cdots \infty$   are  independent in the smooth concordance group.} 
\vskip.1in

In fact, our technique proves a stronger result.  Namely, we obtain independence of Whitehead doubles of the $(p_i,q_i)$ torus knots for any sequence $\{(p_i,q_i)\}_{i=1}^\infty$ of pairs of relatively prime integers   satisfying 
$$ {p_nq_n(2p_nq_n-1)}>{p_{n-1}q_{n-1}(4p_{n-1}q_{n-1}-1)}.$$
See Theorem \ref{mainresult} for the precise statement.

As the title suggests, the proof of the theorem takes place in the context of gauge theory, and makes use of a slice obstruction coming from the topology   of moduli spaces of instantons for $SO(3)$ bundles over 4-manifolds.   In the presence of a homology ball bounded by the branched covers of a collection of Whitehead doubles, we will construct a non-empty moduli space of instantons with an odd number of singular points. Each singular point has a neighborhood homeomorphic $(0,1]$, from which it follows that the moduli space is a 1-manifold with an odd number of boundary components.  If a certain numerical invariant of the branched covers,  the {\em minimal Chern-Simons invariant of   flat $SO(3)$ connections}, is large enough, it follows that the moduli space is compact, hence contradicting the existence of the putative homology ball.   The numerology of the torus knots involved in our theorem is explained by our calculations of the corresponding growth rate of the minimal Chern-Simons invariant.

 Computing Chern-Simons invariants of flat   connections on 3-manifolds requires   knowledge of the flat moduli space of the 3-manifold.  Theorem \ref{lem6.1}, which is  of independent interest, accomplishes this in the case at hand.  It states that the moduli space of flat $SU(2)$ connections (or, equivalently, the space of conjugacy classes  of $SU(2)$ representations of the fundamental group) of the 2-fold branched cover of a  Whitehead double   has a particularly simple structure. 

\medskip

\noindent{\bf Theorem \ref{lem6.1}.}  {\em Let $D(K)$ be the Whitehead double of a knot $K$, and let $\Sigma(D(K))$ denote the 2-fold branched cover of the 3-sphere, branched over $D(K)$. 
Then every $SU(2)$ representation of $\pi_1(\Sigma(D(K)))$ restricts to an abelian representation on the 2-fold branched  cover of the tubular neighborhood of $K$,   branched over the Whitehead double of its core. }

\medskip

 To put Theorem 1 in context, we should point out that infinite generation of the subgroup of $\mathcal{C}$ generated by topologically slice knots was proved by Endo using similar obstructions \cite{Endo}.   More recently, this subgroup was shown to admit an infinitely generated quotient \cite{HLR}. Almost nothing was known, however, regarding independence of doubled knots, though it follows from work of Manolescu-Owens \cite{MO} and Livingston \cite{Livingston} that the span of Whitehead doubles  contains a free abelian group of rank $2$.  This result uses Heegaard Floer theory, and we find it surprising that despite the many successes of these invariants, Theorem 1 seems out of reach to these more modern techniques. This complementary nature of the two schools of gauge theory seems quite striking, and motivates further study.  

It is also interesting to compare our results with recent work of Cochran, Harvey, and Leidy \cite{CHL}, who consider the effect of various satellite operations on the topological concordance group.   In particular, they conjecture that many such operations are injective, and use them to prove infinite generation of many of the quotients in the Cochran, Orr, Teichner filtration of the topological concordance group \cite{COT}.  Of course by Freedman's theorem, Whitehead doubling induces the zero map on the topological concordance group, so our theorem is in a different realm entirely.   In fact, Theorem 1 provides the first example of a satellite operation whose image in $\mathcal{C}$ has infinite rank, but whose image in the topological concordance group has finite rank (zero).  The techniques and tools developed here  can be viewed as a first step towards a systematic probing of the structure of the smooth concordance group through satellite operations.

\vskip 0.1in

\noindent{\bf Outline:} In the next section, we briefly review the method used to obstruct sliceness of linear combinations of Whitehead doubles.  While the obstruction comes from gauge theory, we hope that our exposition will enable those unfamiliar with this machinery to understand the general technique,  and will indicate how one may apply it for a variety of further applications.    Of particular interest is Theorem \ref{thm:indcriterion}, which provides a general criterion for a collection of homology spheres to be independent in the $\Z/2$-homology cobordism group.  

 Section \ref{2fold} is devoted to a topological analysis of the 2-fold branched covers of Whitehead doubles, with an eye towards implementing the instanton obstruction.  In particular, we analyze the JSJ decomposition of the 2-fold branched covers, and use this to construct cobordisms from them to manifolds whose interaction with gauge theory is more easily understood.

Section \ref{section:proof}, the technical heart of the article,  uses these results to prove Theorem 1.  A key step here is the computation of the minimal Chern-Simons invariants of the branched double covers.   This step is arguably the most challenging   of the article, and is informed by  techniques for analyzing the Chern-Simons invariants of flat connections on 3-manifolds decomposed along tori as developed in   \cite{KK1, KK2, auckly}.   We also prove Theorem 4.1, which is invaluable in this analysis.

 Having proved   Theorem 1, we take a step back in Section \ref{compare} to compare our technique with those afforded by more recently discovered invariants, most notably those from Heegaard Floer homology and Khovanov homology.  As mentioned, it seems very difficult to prove our theorem with these newer invariants, and we discuss reasons for this difficulty and speculate on the feasibility of an alternate proof.  

\vskip 0.1in

\noindent {\bf Acknowledgements:} It is our pleasure to thank Chuck Livingston and Danny Ruberman for their interest, and many helpful conversations.

\section{The instanton cobordism obstruction}\label{gauge}
A useful method for studying concordance is to apply a homomorphism from $\mathcal{C}$ to a 3-dimensional bordism group, and study the image of the relevant concordance classes in this latter group.   Theorem $1$ will be proved in this context, using the well-known observation that the 2-fold branched cover of a slice knot bounds a 4-manifold with the $\Z/2$-homology of a 4-ball (see Lemma \ref{branchslice} below).  Thus to obstruct sliceness it suffices to obstruct a 3-manifold, the 2-fold branched cover, from bounding a $\Z/2$-homology ball.  To this end, we outline a technique for showing that a disjoint union of oriented 3-manifolds cannot bound a negative definite 4-manifold $Q$ with $\Hrest(Q;\Z/2)=0$.   The general strategy was developed by Fintushel-Stern \cite{FSpseudo,FS}, and refined by Furuta \cite{Furuta}.  Their work took place in the context of $SO(3)$ gauge theory on pseudofree orbifolds.  Building on work of Mati{\'c} and Ruberman \cite{matic,ruberman}, we recently recast the technique entirely in terms of gauge theory on manifolds with cylindrical ends \cite{HK}. We choose to adopt this latter perspective throughout.

\subsection{The general idea from a topological perspective}
We begin by describing the argument from a topological perspective, then move on to briefly describe the gauge theory underpinning the technique.  Our discussion will take place in a somewhat simplified setting, but one which will be more than sufficient to prove Theorem $1$.  For more general obstructions  and further details, we refer the reader  to \cite{HK}.

The obstruction results from an interplay between two gauge theoretic notions, which we call {\em Property \inst} and {\em Chern-Simons bounds}, respectively.    To describe the former, let $W$ be a negative definite 4-manifold with $\Hrest(W;\Z/2)=0$.  To a class  $e\in H^2(W;\Z)$ one can associate the $SO(3)$ bundle over $W$ obtained by stabilizing the unique $SO(2)$ bundle with Euler class $e$.
Roughly, we say that  $(W,e)$ has  {\em Property \inst} if  the moduli space of instantons on this $SO(3)$ bundle is  positive dimensional and has  an odd number of singular points (see the next section for a more precise definition).      Points in this moduli space are gauge equivalence classes of solutions to the  anti self-dual Yang-Mills equations which limit, near $\partial W$,  to a fixed flat connection determined by $e|_{\partial W}$.  A feature of these equations is that their solutions come equipped with a natural notion of energy which, for the moduli spaces involved, is given by $-e\cdot e\in \Q$.  Here $e\cdot e$ denotes the cup square of a lift of $e$ to $H^2(W,\partial W;\Q)$, under the intersection form. 
 
The key fact about Property \inst  \ is that it is preserved under negative definite cobordism.  The following proposition will be proved in Subsection \ref{subsec:cobprop}.

\begin{prop} \label{prop:defcob}
Suppose $(W,e)$ has  Property \inst and $Y\subset \partial W$ is an integer homology sphere. For a negative definite 4-manifold $Q$ satisfying $\Hrest(Q;\Z/2)=0$ and $-Y\subset \partial Q$, let  $X=Q\underset{Y}\cup W$ and   $\tilde{e}=0\oplus e\in H^2(X)\cong H^2(Q)\oplus H^2(W)$.   Then $(X,\tilde{e})$ has Property \inst.
\end{prop}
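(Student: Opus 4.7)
My plan is to reduce every ingredient of Property \inst for $(X,\tilde{e})$ to the corresponding ingredient for $(W,e)$, using that $Y$ is an integer homology sphere to make the cohomology and intersection form of $X$ split orthogonally along the gluing.

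First I would verify the topological hypotheses. The Mayer--Vietoris sequence for $X = W\cup_Y Q$, combined with the fact that $Y$ is an integer homology sphere, gives an orthogonal direct-sum decomposition $H_2(X;\Z)\cong H_2(W;\Z)\oplus H_2(Q;\Z)$ of intersection forms, so $X$ is negative definite with $b^+(X)=0$. The same argument with $\Z/2$ coefficients, together with the vanishing of $H_1(W;\Z/2)$ and $H_1(Q;\Z/2)$, yields $H_1(X;\Z/2)=0$, and hence also $b_1(X;\R)=0$. Dually, $H^2(X;\Z)\cong H^2(W;\Z)\oplus H^2(Q;\Z)$, and under this splitting $\tilde{e}=e\oplus 0$ satisfies $\tilde{e}\cdot\tilde{e}=e\cdot e$ and $\tilde{e}|_Y=e|_Y$. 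Hence the Pontryagin number of the associated $SO(3)$ bundle, the asymptotic limiting flat connection along $Y$, and all other data entering the dimension formula are unchanged, so $\calm(X,\tilde{e})$ has the same (positive) formal dimension as $\calm(W,e)$.

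Next I would match the singular strata. Singular (reducible) points of $\calm(X,\tilde{e})$ correspond to integral lifts $c\in H^2(X;\Z)$ of $\tilde{e}\bmod 2$ subject to the self-intersection condition dictated by the energy of the moduli space. Writing $c=c_W+c_Q$ under the Mayer--Vietoris splitting, the mod-$2$ condition forces $c_Q$ to be even; combined with the negative definiteness of the intersection form on $H^2(Q)$ and the self-intersection constraint $c\cdot c = \tilde{e}\cdot\tilde{e}$, this pins $c_Q=0$ and $c_W\cdot c_W = e\cdot e$. The upshot is a canonical bijection between reducibles of $\calm(X,\tilde{e})$ and reducibles of $\calm(W,e)$, so the parity of the singular set is preserved. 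A gluing/stretching argument along $Y$, as in \cite{HK}, then promotes this cohomological bijection to an actual identification of neighborhoods of the singular strata in the two moduli spaces.

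The main obstacle I expect is this last identification: the topological bookkeeping above is essentially routine, but lifting it to a true gauge-theoretic correspondence requires ruling out nontrivial ASD contributions on the $Q$ side. This should follow from the energy identity $\tilde{e}\cdot\tilde{e}=e\cdot e$, which forces any $Q$-component of an instanton to be flat of zero energy, together with the constraints on $Q$ forcing such a flat connection to be cohomologically trivial --- precisely the ingredients packaged into the cylindrical-end framework of \cite{HK}, which I would cite rather than reprove.
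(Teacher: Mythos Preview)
Your overall strategy matches the paper's, but there is a genuine gap in your count of reducibles. You assert that negative definiteness together with $c\cdot c=\tilde e\cdot\tilde e$ ``pins $c_Q=0$'', yielding a bijection between $\Sing(X,\tilde e)$ and $\Sing(W,e)$. This is false: the intersection form controls only the \emph{free} part of $H^2(Q)$, so from $c_Q\cdot c_Q=0$ you can conclude only that $c_Q$ is torsion, not that it vanishes. The paper shows that $c_Q=2a$ ranges over the torsion subgroup $T\subset H^2(Q,\partial Q)$, so that $C(X,\tilde e)=T\times C(W,e)$ and $\Sing(X,\tilde e)=T\times\Sing(W,e)$. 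The parity conclusion survives only because $H_1(Q;\Z/2)=0$ forces $T$ to have \emph{odd} order, a fact you never invoke. Note also that the step forcing $c_Q\cdot c_Q=0$ uses the energy bound $-e\cdot e<2$ built into Property~\inst\ (so that $4a\cdot a$, a nonpositive even integer, is squeezed to zero); you do not mention this bound, and without it $c_Q$ could have strictly negative square.

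A second, smaller gap: the set $C(W,e)$ of cohomological candidates is in general strictly larger than $\Sing(W,e)$, the actual singular points of the moduli space. Promoting the equality $C(X,\tilde e)=T\times C(W,e)$ to $\Sing(X,\tilde e)=T\times\Sing(W,e)$ is not an analytic ``gluing/stretching'' statement about instantons, as your last paragraph suggests, but a purely topological question about embedding $SO(2)$ subbundles $L_{e'}\subset E_e$ compatibly with $L_e$ over the boundary. The paper carries this out explicitly: it builds the required subbundle over $X$ from one over $W$ by modifying a trivializing section near a surface representing the torsion class $a$, and conversely shows that a subbundle over $X$ can be isotoped to agree with $L_{\tilde e}$ near $Y$ by analyzing the clutching map $Y\to SO(3)$ (which is nullhomotopic since its degree vanishes and $Y$ is a homology sphere). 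Deferring this to \cite{HK} is not quite right; the argument is specific to the gluing situation here.
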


Coupled with a compactness criterion for instanton moduli spaces,   Proposition \ref{prop:defcob} gives rise to a cobordism obstruction.  In the form we use, the compactness theorem is a combination of fundamental results of Uhlenbeck \cite{UhlenbeckCompactness}, and Floer, Taubes, and Morgan-Mrowka-Ruberman \cite{FloerInst,TaubesAsympPer, MMR}, respectively.  Their theorems analyze the way in which a sequence of instantons in a given moduli space  can fail to converge.   
The main idea is that any divergent sequence of instantons must take with it a non-zero amount of energy, and that this energy can be lost in two ways.  The first is through bubbling,   a phenomenon which requires $4k$ units of energy, $k\in \mathbb{N}$  \cite{UhlenbeckCompactness}.  The second arises from the non-compactness of the $4$-manifold in question (for analytic reasons, infinite cylindrical ends are added to boundary components of $4$-manifolds), and is often referred to as  ``energy escaping down an end.''      Thus  a divergent sequence is prohibited if the energy of the instantons in a given moduli space is small enough to prohibit either of these phenomena.   

For the latter issue, the quanta of energy which can escape down an end are governed by Chern-Simons invariants.  To make this precise, recall that a  flat connection  $\gamma$ on an $SO(3)$ bundle  over a 3-manifold $Y$ has a  Chern-Simons invariant $\cs(Y,\gamma)\in \R/4\Z\cong (0,4]$ 
(see the next section for a definition).  Given a closed,   oriented 3-manifold $Y$ and class $e\in H^2(Y;\Z)$ we will consider, as above, the $SO(3)$ vector bundle obtained by stabilizing the $SO(2)$ bundle with Euler class $e$.  This bundle, which we denote $E_e$, admits a unique (up to gauge equivalence) reducible flat connection $\alpha_e$ compatible with the given reduction of structure.

 Denote  by $\tau(Y,e)$ the minimum of the differences$$\cs(Y,\gamma)- \cs(Y,\alpha_e) \in (0,4],$$ where $\gamma$ ranges over all flat connections on $E_e$ (again, see the next section for a more precise definition of $\tau(Y,e)$). We have the following compactness criterion for our moduli spaces, arising from the fact that the amount of energy that can  escape down an end in a divergent sequence must equal $\cs(Y,\gamma)- \cs(Y,\alpha_e)$ for some $\gamma$.

\begin{prop}  {\em(\cite[Proposition 2.9]{HK} cf. \cite{UhlenbeckCompactness,FloerInst,TaubesAsympPer,MMR})} \label{prop:compactness} Let $W$ be a negative definite 4-manifold and $e\in H^2(W)$. Let $\modE$ be the moduli space of instantons determined by $e$. If each component $Y\subset \partial W$   satisfies
\begin{equation}\label{eq:csbound} 0< -e\cdot e< \tau(Y,e|_Y)\le 4,\end{equation}  then $\modE$ is compact. \qed
\end{prop}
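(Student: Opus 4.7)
The plan is to show that any sequence $\{A_n\} \subset \modE$ has a subsequence converging in $\modE$, by invoking the classical compactness machinery for anti-self-dual connections on manifolds with cylindrical ends (Uhlenbeck, Floer, Taubes, Morgan--Mrowka--Ruberman) and then ruling out, via the energy bound~\eqref{eq:csbound}, each of the two possible failure modes: interior bubbling and energy escape down an end.

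First I would establish the uniform energy bound. The Chern--Weil formula, applied to any $A \in \modE$ with asymptotic value $\alpha_e$ on each end, identifies the Yang--Mills energy $\int_W |F_A|^2$ with $-e \cdot e$. By hypothesis this lies in $(0,4)$, so the sequence has uniformly bounded energy strictly less than $4$. By Uhlenbeck's bubbling analysis, each bubble point in any ideal limit absorbs at least $4$ units of energy, so the bound $-e \cdot e < 4$ immediately forces the subsequential limit to have no bubbles; convergence therefore holds smoothly on compact subsets of $W$.

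Next I would rule out energy escape down an end. The Floer--Taubes--MMR asymptotic analysis shows that any subsequential limit $A_\infty$ has, on each cylindrical end $Y \times [0,\infty)$, a well-defined asymptotic flat connection $\gamma_Y$ on $E_{e|_Y}$, and that the total energy defect $-e\cdot e - \int_W |F_{A_\infty}|^2$ decomposes as a sum over boundary components of non-negative reals $\cs(Y,\gamma_Y) - \cs(Y,\alpha_e)$, with these Chern--Simons differences lifted from $\R/4\Z$ using the relative Chern--Simons form along each cylinder. By the definition of $\tau(Y,e|_Y)$, each lifted difference is either $0$ or at least $\tau(Y,e|_Y)$. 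Since $-e\cdot e < \tau(Y,e|_Y)$, the positive case is impossible, so every asymptotic limit agrees with $\alpha_e$ up to gauge, forcing $A_\infty \in \modE$ and completing the argument.

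The main obstacle I anticipate is making the Chern--Simons accounting rigorous: one must carefully lift $\cs(Y,\gamma_Y) - \cs(Y,\alpha_e)$ from $\R/4\Z$ to an honest real number via a path of connections on the cylinder that is instanton-like near $+\infty$, and then verify that the minimum positive value obtained as $\gamma_Y$ varies over flat representatives is correctly computed by $\tau(Y,e|_Y)$. The hypothesis $\tau(Y,e|_Y) \leq 4$ in \eqref{eq:csbound} is precisely what guarantees that $\tau$ is realized as a lift inside the fundamental domain $(0,4]$ rather than being the image of some larger positive real under reduction mod $4$; this is what allows $\tau$ to genuinely control the size of a possible energy jump. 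Once this bookkeeping is in place, the two bounds combine straightforwardly to prevent any loss of compactness.
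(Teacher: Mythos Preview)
Your proposal is correct and mirrors exactly the approach the paper outlines. Note that the paper does not actually supply a proof of this proposition: it is stated with an immediate \qed and attributed to \cite[Proposition~2.9]{HK} together with the classical references; the paragraph preceding the statement gives precisely the two-step heuristic you flesh out (bubbling costs at least $4$ units of energy, while escape down an end costs at least $\tau(Y,e|_Y)$), so your write-up is a faithful expansion of the paper's sketch.
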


Propositions \ref{prop:defcob} and \ref{prop:compactness} lead immediately to a cobordism obstruction.  Suppose we wish to show that an integer homology 3-sphere $Y$ doesn't bound any 4-manifold with the $\Z/2$-homology of a 4-ball.  If we know that $Y$ {\em does} bound a pair $(W,e)$ with Property \inst\ then, according to  Proposition \ref{prop:defcob}, $(X=Q\cup_{Y} W,\tilde{e})$ will satisfy Property \inst \ for any putative $\Z/2$-homology ball $Q$ with $-\partial Q=Y$.  This produces a positive dimensional moduli space $\mathcal{M}(X,\tilde{e})$ with an odd number of singular points.  If the Chern-Simons bound $$0< -\tilde{e}\cdot \tilde{e}=-e\cdot e< \tau(M,e|_M)\le 4,$$ is satisfied for each component of $\partial W$ other than $Y$, it follows from Proposition \ref{prop:compactness} that  $\mathcal{M}(X,\tilde{e})$ is compact.  Now the singular points in $\mathcal{M}(X,\tilde{e})$ have neighborhoods homeomorphic to cones on $\mathbb{C}P^N$, where $2N+1=\mathrm{dim}\ \mathcal{M}(X,\tilde{e})$.  When $N=2k$  we immediately have a contradiction, since an odd number of $\mathbb{C}P^{2k}$'s cannot bound a $(2k+1)$-manifold.  A refined argument handles the case when $N$ is odd, but for the purposes of Theorem 1, we need only the case $N=0$; that is,  the moduli spaces used in the proof of our main theorem  are 1-manifolds with boundary. 

 In the subsequent sections, we turn to a more detailed description of Property \inst \ and the Chern-Simons invariants.  In particular, we will see that the boundary of a 4-manifold satisfying Property \inst  \ is naturally divided into 2 types: those components along which we can glue negative definite cobordisms while retaining Property \inst \ (Proposition \ref{prop:defcob} indicates that homology spheres fall into this category), and those components $M$ satisfying the Chern-Simons bound,  \eqref{eq:csbound}. This perspective will be quite useful, and leads to a criterion (see  Theorem \ref{thm:indcriterion}) for a collection of homology spheres to be independent in the homology cobordism group.  The challenge of the present article is to verify that branched double covers of Whitehead doubles of certain torus knots satisfy this criterion.

Before delving any further into gauge theory, we remark that while constructing moduli spaces of instantons with special properties may seem intimidating to the uninitiated, Property \inst \  can often be easily verified using purely topological techniques.  Indeed, in addition to its role in the cobordism obstruction, Proposition \ref{prop:defcob} can also be used to show that many 3-manifolds bound 4-manifolds satisfying Property \inst, even if their interaction with gauge theory is initially obfuscated; all we must do is find an appropriate cobordism from the 3-manifold in question to one known to bound a 4-manifold with Property \inst.   In fact, for the purpose of proving Theorem $1$, it will suffice to know that certain Seifert manifolds bound 4-manifolds with Property \inst, with appropriate Chern-Simons bounds.

\begin{prop} {\em(}\cite[Section 3]{HK}, c.f. \cite{FSpseudo}{\em )} Let $p,q,k>0$, and $gcd(p,q)=1$. \label{truncated} There is a pair $(X,e)$ with Property \inst, with  $e\in H^2(X)$ satisfying $$-e\cdot e=\tfrac{1}{pq(pqk-1)}.$$
 The boundary of $X$ consists of the Brieskorn homology sphere $\Sigma(p,q,pqk-1)$ and three lens spaces $Y_j,  j=1,2,3$, which  satisfy the Chern-Simons bounds   
$$ \tau(Y_1, e_1)\ge\tfrac{1}{p}, \ \ \  \tau(Y_2, e_2)\ge\tfrac{1}{q}, \ \ \  \text{ and } \ \tau(Y_3, e_3)\ge\tfrac{1}{pqk-1}$$
for any class $e_i\in H^2(Y_i)$. \qed
\end{prop}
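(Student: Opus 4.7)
The plan is to build $X$ from the canonical orbifold-cobordism description of $\Sigma(p,q,pqk-1)$ and then appeal to Fintushel--Stern's moduli space analysis on pseudofree orbifolds. Setting $r = pqk-1$, I would realize $\Sigma(p,q,r)$ as the total space of a Seifert fibration over $S^2$ with three exceptional fibers of multiplicities $p, q, r$ and Seifert invariants chosen so that $\Sigma(p,q,r)$ is an integer homology sphere (which is possible because $\gcd(p,q) = \gcd(pq, r) = 1$). Taking the mapping cylinder $\mathbb{E}$ of this Seifert fibration yields a 4-dimensional orbifold cobordism from $\Sigma(p,q,r)$ to $S^2$ with exactly three cone points of isotropy $p$, $q$, $r$. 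Removing open orbifold-disk neighborhoods of these cone points produces a smooth 4-manifold $X$ with
\[
\partial X = \Sigma(p,q,r) \sqcup Y_1 \sqcup Y_2 \sqcup Y_3,
\]
where each $Y_j$ is the link lens space of the $j$-th singularity.

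I would next verify the topological hypotheses of Property \inst. A Mayer--Vietoris computation applied to the plumbed resolution of $\mathbb{E}$ (equivalently, the standard plumbing graph whose boundary is $\Sigma(p,q,r)$) shows that $H_1(X;\Z/2) = 0$, and with the appropriate orientation the intersection form of $X$ is negative definite because the standard Seifert plumbing graph has negative definite intersection form. The class $e\in H^2(X;\Z)$ would then be produced from a distinguished $SO(3)$ V-bundle on $\mathbb{E}$ assembled from the Seifert invariants and with nontrivial holonomy at each cone point; its restriction to the smooth locus $X$ gives a genuine $SO(3)$ bundle whose stabilization is determined by $e$. A direct computation with the rational lift of $e$ to $H^2(X, \partial X;\Q)$, using the Seifert data at each of the three lens-space boundaries, gives $-e \cdot e = 1/(pq(pqk-1))$.

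To establish Property \inst I would invoke the Fintushel--Stern analysis \cite{FSpseudo} of the lowest-energy moduli space of anti-self-dual connections on the corresponding orbifold bundle over $\mathbb{E}$. They show that, after generic perturbation, this moduli space is a $1$-manifold containing an odd number of reducible $U(1)$-type singular points, parametrized by compatible sign choices at the three cone points. The translation from the orbifold description to the cylindrical-end framework of \cite{HK} then produces precisely the moduli space $\mathcal{M}(X,e)$ required by Property \inst.

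The final ingredient is the three Chern--Simons lower bounds on the lens-space boundaries. For any $L(a,b)$ the flat $SO(3)$ connections are classified by conjugacy classes of homomorphisms $\Z/a\to SO(3)$, and for each Euler class $e'\in H^2(L(a,b);\Z)$ the Chern-Simons values $\cs(L(a,b),\gamma)$ of flat connections $\gamma$ on $E_{e'}$ form an arithmetic progression with common denominator $a$; hence the minimum nonzero difference between any two is at least $1/a$. Specializing $a=p,\,q,\,pqk-1$ yields the claimed bounds. The main obstacle I anticipate is not any single step in isolation but the bookkeeping needed to match, at each lens-space end $Y_j$, the distinguished reducible $\alpha_{e|_{Y_j}}$ of the cylindrical-end formalism with the reducible determined by the orbifold V-bundle data, and to verify that the energy $-e\cdot e = 1/(pq(pqk-1))$ is the minimum positive value compatible with these three boundary reducibles. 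Once that matching is in place, the Chern-Simons bounds above can be legitimately compared against $-e\cdot e$, and the compactness criterion of Proposition \ref{prop:compactness} applies at each lens-space end.
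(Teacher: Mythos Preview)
The paper does not prove this proposition; the \qed\ defers to \cite[Section~3]{HK} and \cite{FSpseudo}. Your outline---truncated mapping cylinder of the Seifert fibration $\Sigma(p,q,r)\to S^2$, the canonical $SO(3)$ V-bundle furnishing $e$, Fintushel--Stern's count of reducibles recast in the cylindrical-end framework, and the lens-space Chern--Simons bounds via the finite-fundamental-group argument (which is exactly Lemma~\ref{observation} here)---is precisely the construction carried out in those references, and the ``bookkeeping'' you flag at the end is indeed the content of \cite[Section~3]{HK}.

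One small correction: the truncated mapping cylinder $X$ is not a plumbing, so negative definiteness and $H_1(X;\Z/2)=0$ should not be justified by appealing to a plumbing graph. Rather, $H_2(X;\Q)\cong\Q$ is generated by a rational lift of the zero section of the orbifold disk bundle, and its self-intersection is the orbifold Euler number, which is negative with the standard orientation; the vanishing of $H_1(X;\Z/2)$ comes from a direct Mayer--Vietoris computation on $X$ itself (or from the long exact sequence of the pair $(\mathbb{E},X)$), not from a resolution.
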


\subsection{A bit of gauge theory}\label{subsec:gaugetheory}

In this subsection we fill in some of the gauge theoretic gaps left by the discussion of the previous subsection.  Our purpose here is to give a more precise definition of what it means for a 4-manifold to have Property \inst, and to describe Chern-Simons invariants and some of their key features.  The treatment will be intentionally terse, and a more detailed exposition is the primary purpose of the companion article \cite{HK}.  The discussion here could be viewed as a guide and summary of \cite{HK}. 
\medskip

As above, and for the remainder of this section, let $W$ be a negative definite 4-manifold satisfying $\Hrest(W;\Z/2)=0$.   We adopt the convention that a 4-manifold is negative definite if the cup-square of any non-torsion cohomology class is a negative multiple of the fundamental class.   With this convention, a 4-manifold with $H_2(W;\Q)=0$ is negative definite.  Also observe that since $\Hrest(W;\Z/2)=0$, being negative definite implies $\partial W$ is a union of rational homology spheres.  Thus all 3-manifolds in our discussion satisfy $H^1(Y;\Z)=0$.

 Given a 4-manifold $W$ we can associate a relative Pontryagin number to any pair $(E,\alpha)$, where $E$ is an $SO(3)$ bundle over $W$ and $\alpha$ is a flat connection on $E|_{\partial W}$.  To do this, consider a connection $A$ on $E$ which restricts to  to $\alpha$ on the boundary, and define the {\em relative Pontryagin number} $p_1(E,\alpha)$ by $$p_1(E,
 \alpha)=-\frac{1}{8\pi^2}\int_W \Tr(F(A)\wedge F(A)),$$ 
 where $F(A)$ denotes the curvature 2-form of $A$.  As the notation suggests, this number is independent of the choice of connection extending $\alpha$. 
 
A  class  $e\in H^2(W;\Z)$  determines  the unique $SO(2)$ vector bundle   $\lineb$ over $W$ with Euler class $e$.  Stabilizing $\lineb$ by taking a sum with the trivial 1-dimensional real line bundle $\epsilon$ produces an $SO(3)$ vector bundle, denoted $E_e\cong \lineb\oplus \epsilon$.   

 Now $\lineb|_{\partial W}$ possesses a flat connection $\beta_e$ which is unique up to gauge transformation  (Lemma $2.10$ of \cite{HK}).  We can stabilize $\beta_e$ by the trivial connection on  $\epsilon$ to yield a flat connection on $E_e|_{\partial W}$.  We will denote this stabilized  flat connection by $\alpha_e$.

For the pair $(E_e,\alpha_e)$ we consider here, $p_1(E_e,\alpha_e)$ depends only on  $(W,e)$.  It  can be calculated in terms of the intersection pairing of $W$:   $$p_1(E_e,\alpha_e)=-e\cdot e\in \Q.$$ (See Section 2.6 \cite{HK} for details on the extended rational-valued intersection pairing on $H^2(W;\Z)$, and Proposition 2.13 of \cite{HK} for the identification $p_1(E_e,\alpha_e)=-e\cdot e$.)

 \medskip

 Closely related to $p_1(E,\alpha)$ is the relative Chern-Simons invariant.   Given a rational homology 3-sphere $Y$ and class $e\in H^2(Y;\Z)$, consider the 4-manifold $[0,1]\times Y$ and the corresponding class (which we continue to denote by) $e\in H^2([0,1]\times Y;\Z)$. Let $ E_e$ denote  the  $SO(3)$ bundle over $[0,1]\times Y$ determined by $e$, as described above.  For any flat $SO(3)$ connection $\gamma$ on $E_e|_{\{0\}\times Y}$, choose a connection $A$ on $E_e$ whose restriction to $\{t\}\times Y$ equals $\gamma$ for $t$ near $0$ and equals $\alpha_e$ for $t $ near $1$. Define 
\begin{equation}\label{cs}
\cs(Y,\gamma, \alpha_e)=-\frac{1}{8\pi^2}\int_{[0,1]\times Y}  \Tr(F(A)\wedge F(A)).
\end{equation}
The set  of values taken modulo 4, $\{\cs(Y,   \gamma,\alpha_e)\ | \ \gamma\text{ a flat connection on } E_e \}\subset \R/4\Z$,
is finite, since $\cs(Y,-,\alpha_e)$ is a  a locally constant function on the space of gauge equivalence classes of flat connections on $E_e$, a space with finitely many path components. Identifying $\R/4\Z$ with $(0,4]$ in the obvious way,    define the {\em minimium relative Chern-Simons invariant} $ \tau(Y,e)\in(0,4]$ by
$$ \tau(Y,e) =\min \{\cs(Y,   \gamma,\alpha_e)\ | \ \gamma\text{ a flat connection on } E_e \}.$$  When $e=0$ we  write $\tau(Y)=\tau(Y,e)$.

The Chern-Simons invariant has a useful cobordism property.  To describe it, suppose we have a flat connection $\gamma$ on an $SO(3)$ bundle $E$ over a 3-manifold $Y$.   Consider any extension of  $E$ to a bundle $\tilde{E}$ over some 4-manifold $W$ with $\partial W=Y$.   Then $p_1(\tilde{E},\gamma)$ modulo $\Z$ depends only on the pair $(Y,\gamma)$, and not on our choice of $W$ or the extension of $E$.   This is a consequence of the fact that  the first Pontryagin number of a bundle over a closed 4-manifold is an integer.  In light of this, we denote  $p_1(\tilde{E},\gamma)$ modulo $1$,  by $\cs(Y,\gamma)$.  This becomes useful, due to the obvious equality 
 $$\cs(Y,\gamma,\alpha_e)= \cs(Y,\gamma)-\cs(Y,\alpha_e) \text{ mod }\Z.$$
With care, one can retain mod $4$ information, but for our purposes it is sufficient to estimate 
 $\cs(Y,\gamma,\alpha_e)$ and $\tau(Y,e)$ modulo $\Z$.  In fact, many of our estimates in Section  
\ref{section:proof}  will come from the following lemma, which is a simple consequence of the definitions.

\begin{lemma} \label{observation} Let $Y$ be a rational homology 3-sphere.    Suppose  that for every flat connection $\gamma$ on every $SO(3)$ bundle over $Y$,  $cs(Y,\gamma)$ is a rational number whose denominator divides $p$.  Then $\tau(Y,e)\ge \frac{1}{p}$ for all $e\in H^2(Y)$.

\end{lemma}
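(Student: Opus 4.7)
The plan is to exploit the identity
$$\cs(Y,\gamma,\alpha_e)\equiv \cs(Y,\gamma)-\cs(Y,\alpha_e)\pmod{\Z}$$
recorded just above the lemma statement. The hypothesis applies to every flat connection on every $SO(3)$ bundle over $Y$, so it applies both to an arbitrary flat connection $\gamma$ on $E_e$ and to the distinguished reducible flat connection $\alpha_e$ (which is itself flat on $E_e$). Consequently $\cs(Y,\gamma)$ and $\cs(Y,\alpha_e)$ are each rationals with denominator dividing $p$, and hence so is their difference modulo $\Z$.

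Next I would unwind the identification of $\R/4\Z$ with $(0,4]$ used to define $\tau(Y,e)$. The preceding paragraph shows that $\cs(Y,\gamma,\alpha_e)\bmod \Z$ lies in the finite set $\{0,1/p,2/p,\dots,(p-1)/p\}$. Lifting to a representative in $(0,4]$: if the fractional part is nonzero it is already at least $1/p$; if the fractional part is zero, then since $(0,4]$ is half-open, the representative is one of $1,2,3,4$, each of which is at least $1/p$. In either case $\cs(Y,\gamma,\alpha_e)\ge 1/p$, and taking the minimum over flat $\gamma$ on $E_e$ gives $\tau(Y,e)\ge 1/p$.

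The only place where any care is required is the bookkeeping at the last step: one must verify that the half-open identification $\R/4\Z\cong(0,4]$ (rather than, say, $[0,4)$) prevents the minimum from collapsing to $0$ when the relative Chern-Simons invariant happens to be integer-valued. Apart from this, the argument is essentially formal, being a direct consequence of the cobordism property of the Pontryagin number described in the paragraph preceding the lemma.
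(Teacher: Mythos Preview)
Your proposal is correct and matches the paper's approach: the paper does not give a written proof, stating only that the lemma ``is a simple consequence of the definitions,'' and your argument spells out exactly those definitions---the mod $\Z$ identity $\cs(Y,\gamma,\alpha_e)\equiv \cs(Y,\gamma)-\cs(Y,\alpha_e)$ together with the identification $\R/4\Z\cong(0,4]$. Your care about the half-open interval is appropriate and is precisely what makes the ``simple consequence'' work.
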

 
\noindent For example, if $Y$ has finite fundamental group of order $p$, then $\tau(Y,e)\ge \frac{1}{p}$ for all $e\in H^2(Y)$.

 \medskip
 
Having dispatched the Chern-Simons invariants, we turn our attention to  Property \inst.  We would like this to mean that a certain moduli space of instantons $\modE$ is positive dimensional and has an odd number of singularities. In order to understand this, we must first understand the definition of $\modE$.

For analytic reasons, one attaches ends to $W$ which are isometric to $[0,\infty)\times \partial W$, with corresponding extensions of the bundle $E_e|_{\partial W}$.  We then have a flat connection $\alpha_e$ on the ends $[0,\infty)\times \partial W$.
The moduli space $\modE$ consists of gauge equivalence classes of {\em instantons} on the bundle  $E_e$ which converge exponentially (with respect to appropriate Sobolev norms) along the ends to $\alpha_e$.  Instantons, by definition, are connections $A$ on $E_e$ whose curvature satisfies the elliptic partial differential equation $$F(A)=-\ast F(A). $$ See  Sections $2.1$ and $2.3$ of \cite{HK} for further analytic details and references.  The exponential decay condition ensures that the integral
 $$ -\frac{1}{8\pi^2} \int_{W\cup  ([0,\infty)\times \partial W)} \Tr(F(A)\wedge F(A)) $$
 converges and equals $p_1(E_e,\alpha_e)$.

The natural notion of energy for a  connection is given by the (normalized) $L^2$ norm of its curvature $$   ||F(A)||^2_{L^2}= \frac{1}{8\pi^2}  \int_{W\cup  ([0,\infty)\times \partial W)} \Tr(F(A)\wedge \ast F(A)),$$ which, for an instanton, satisfies 
$$||F(A)||^2_{L^2}=p_1(E_e,\alpha_e)=-e\cdot e.$$  In particular, we see that the energy of an instanton in $\modE$ depends only on the bundle $E_e$ which supports it. 

 For the moduli space to have the requisite properties, it is necessary to assume that the restriction of the flat connection  $\alpha_e$ to each component $Y_i\subset \partial W$ is {\em non-degenerate}.  By definition, this means that the associated first cohomology group $H^1(Y_i;\R^3_{\alpha_e})$ vanishes. This holds, for instance, when  $Y_i$ is a rational homology sphere and $e|_{Y_i}$ vanishes. It also holds for any $e$ if  $Y_i$ is a lens space.  If non-degeneracy holds for each $Y_i\subset \partial W$, we will simply say that $\alpha_e$ is non-degenerate.

\medskip

Calculating the dimension of $\modE$ amounts to performing an index calculation for the linearization of the instanton equation.  In the present context, we denote this index by Ind$^+(W,e)$.  A formula for Ind$^+(W,e)$ is given by Proposition 2.6 of \cite{HK}:

\begin{equation}\label{index} \mathrm{Ind}^+(W,e)=-2e\cdot e- 3 + \tfrac{1}{2} \underset{ \{e|_{Y_i} \mathrm{\ is\  nontrivial \ } \}}\sum (3- h_{\alpha_e}(Y_i)-\rho(Y_i, \alpha_e)),\end{equation}
where  $ Y_1\sqcup\cdots\sqcup Y_n = \partial W$.  The quantities in the  summation come from the boundary terms in the Atiyah-Patodi-Singer theorem, and will be ignored for our purposes.  The key point is that they are only affected by those components of $\partial W$ on which the flat connection $\alpha_e$ is non-trivial, since $h_{\alpha_e}(Y)=3$ and $\rho(Y,\alpha_e)=0$ if $e|_{Y}=0$.

We will consider only the case that $-e\cdot e>0$.  Then, if $\Ind^+(W,e)>0$, one can perturb the Riemannian metric on $W\cup \big( [0,\infty)\times \partial W\big)$ to ensure that $\modE$ is a smooth  manifold away from a finite number of singular points whose neighborhoods are homeomorphic to a cone on $\C P^N$, where $2N+1=\Ind^+(W,e)$.  In the situations which arise in this article, $N=0$. Hence $\modE$ is a 1-manifold with boundary.

\medskip

We are now ready to make the following definition.

\begin{definition} A   compact, connected  negative definite 4-manifold $W$  with $\Hrest(W;\Z/2)=0$  has {\em Property \inst \   with respect to }  $e\in H^2(W;\Z)$  provide that
 \begin{itemize}

\item The class $e$ satisfies $-e\cdot e<2$.
\item The reducible flat connection $\alpha_e$ on $E_e|_{\partial W}$ is non-degenerate.

\item $\Ind^+(W,e)>0$.

 \item  The number of singular points of $\modE$ is odd. \end{itemize}
For brevity, we will often say {\em $(W,e)$ satisfies Property \inst}.
\end{definition}

The number of singular points of $\modE$ is essentially determined by the intersection form of $W$.  
To see how, let us use the notation $\Sing(W,e)$ for the set of singular points of $\modE$. Now define
the set 
$$C(W, e)= \left\{\begin{array}{l}
\\ e'\in H^2(W;\Z) \\ 
\\ \end{array} \right\| 
\left.\begin{array}{l}  e'\cdot e'=e\cdot e 
\\  e'=e\ \text{mod } 2 \\ e'|_{Y}=\pm e|_{Y}\text{ for each component } Y\subset \partial W \\ \end{array}\right\}/{\pm1}.$$

\noindent Note that $C(W, e)$ is finite since $W$ is negative definite.  Proposition 2.15 of \cite{HK} shows that  there is an injective function  $$\Sing(W,e)\hookrightarrow C(W,e).$$   In light of this, we abuse notation and consider $\Sing(W,e)$ as a subset of $C(W,e)$.

In certain circumstances we can easily conclude that $\Sing(W,e)=C(W,e)$. One such situation occurs if $C(W, e)$ contains a unique element (necessarily $e$), and then $\Sing(W,e)$ has a unique element as well.  Another case when a bijection is  guaranteed occurs  when $H^1(\partial W;\Z/2)=0$. In general, to each $e'\in C(W,e)$ one can associate an obstruction living in $H^1(\partial W;\Z/2)$, whose vanishing ensures that $e\in \Sing(W,e)$  (see Theorem 2.16 of \cite{HK}).  In the next section we prove Proposition \ref{prop:defcob}.  The proof shows that if $e+2a\in C(W,e)$  for a class $a\in H^2(W)$ that lifts to  a torsion class $\tilde{a}\in H^2(W,\partial W)$, then $e+2a\in \Sing(W,e)$.

\vskip0.01in
In general, the question of whether a class $e'\in C(W,e)$ lies in $\Sing(W,e)$ is purely topological. Indeed, $e'\in C(W,e)$ lies in $\Sing(W,e)$ if and only if
one can find an $SO(2)$ subbundle $$L_{e'}\subset E_e=L_e\oplus \epsilon$$ such that $L_{e'}$ has Euler class $e'$ and so that the restriction of $L_{e'}$ to each boundary component of $W$ coincides with $L_e$ as unoriented subbundles. See Lemma 2.12 of \cite{HK} for details.

\medskip

\subsection{Cobordism properties}\label{subsec:cobprop} Having introduced the necessary gauge theory, we can now be more precise regarding the behavior of Property \inst \ under cobordism, and its role as a cobordism obstruction.  We observe a natural way to partition the boundary components of 4-manifolds, which we call a cs-partition.   This notion leads to Corollary \ref{cor:defcob2},  a    refined version  of Proposition \ref{prop:defcob}.  It  also leads to a general independence criterion, Theorem \ref{thm:indcriterion}, which will be the foundational tool for this article.

\medskip

We begin by proving Proposition \ref{prop:defcob}. Recall that this result says Property \inst \  is preserved when a 4-manifold $W$ having it is enlarged by a negative definite 4-manifold $Q$, attached to $\partial W$ along a homology sphere.  This yields a manifold $X=Q\underset{Y}\cup W$.

\medskip

\noindent{\it Proof of  Proposition \ref{prop:defcob}.}   
Assume that $(W,e)$ satisfies Property \inst.  We wish to show that $(X,\tilde{e})$ satisfies Property \inst, with  $\tilde{e}=0\oplus e\in H^2(X)\cong H^2(Q)\oplus H^2(W)$. This involves showing four things, and we begin with the most challenging; namely, that the number of singular points of $\mathcal{M}(X,\tilde{e})$ is odd.  

 Denote the  boundary components of $W$ by $Y, R_1,R_2,\cdots, R_n$, and the boundary components of $Q$ by $-Y, P_1, \cdots ,P_m$. Let $T=$Torsion$(H^2(Q,\partial Q))$. Our assumption that $H_1(Q;\Z/2)=0$ implies $H^2(Q)$ has no $2$-torsion and, since $H^2(Q,\partial Q)$ injects into $H^2(Q)$, that $T$ has odd order.

 Our first step is to show  that $C(X,\tilde e)= T\times C(W,e)$.   We can decompose any $\tilde e'\in C(X,\tilde e)$ as   $\tilde e'=   2a + e+ 2b$, where $a\in H^2(Q)$ and $b\in H^2(W)$.  Note that the splitting of $H^2(X)$ which allows this decomposition is orthogonal with respect to the intersection form, and arises because $Y$ is a homology sphere. Thus 
 $$e\cdot e= \tilde{e}\cdot \tilde{e} = \tilde{e}'\cdot \tilde{e}'=4a\cdot a + (e+ 2b)\cdot (e+2b).$$
 The restriction of $2a$ to $\partial Q$ equals zero, since its restriction to   $P_i$ equals $\pm \tilde{e}|_{P_i}=0$, and any class restricts to zero on $Y$ (as it is a homology sphere).  If follows that  $2a $ lifts to $H^2(Q,\partial Q)$, and hence $a\cdot a\in \tfrac{1}{2}\Z$.  Since $Q$ is negative definite, we see that $4a\cdot a$ is a non-positive even integer.

On the other hand, since $W$ is negative definite,  $ (e+  2b)\cdot (e+ 2b) \le 0$. Hence
$$ -2<e\cdot e\leq 4a\cdot a.$$
This implies $a\cdot a=0$, so that $a$ is a torsion class.  Thus $2a$ is a torsion class lifting to $H^2(Q,\partial Q)$. Injectivity of $H^2(Q,\partial Q)\to H^2(Q)$ implies that its lift, which we also denote $2a$, is torsion; that is, $2a\in T$.   Moreover, every class in $T$ can be uniquely written in the form $2a$ (since multiplication by $2$ is an automorphism of any odd order group).
Since $a\cdot a=0$, we have $(e+2b)\cdot(e+2b)=e\cdot e$, implying that $e+2b\in C(W,e)$.  Hence $C(X,\tilde e)= T\times C(W,e)$, as asserted.  

 Next we show that $\Sing(X,\tilde{e})=T\times \Sing(W,e)$.
Suppose  first that $ e' =(e+2b)\in  \Sing(W,e)$ and choose a class $2a\in T\subset H^2(Q,\partial Q)$. We show that $2a+(e+2b)\in \Sing(X,\tilde e)$. 

The bundle $E_e=L_e\oplus \epsilon$ over $W$ admits an $\R^2$  subbundle $L_{e+2b}$ which coincides with $L_e$ (perhaps as unoriented bundles)   over the boundary components $Y, R_1,\cdots , R_n$ of $W$.  
The bundle $L_e$ (and hence also $L_{e+2b}$) is trivial over $Y$, since $Y$ is an integer homology sphere. Any trivialization is unique up to homotopy, since $[Y, SO(2)]=H^1(Y)=0$. Choose a pair of non-vanishing linearly independent sections $s_1, s_2$ of $L_e$ over $Y$ and let $s_3$ be the section of the trivial line bundle in $E_e=L_e\oplus \epsilon$. Let $\tilde{E}_{e}$ be the $\R^3$ bundle over $X$ obtained by gluing the trivial $\R^3$ bundle over $Q$  to $E_{e}$ using the trivialization provided by $s_1,s_2,s_3$ on $Y$.  Then  $\tilde{E}_{e}$ contains the subbundle $\tilde L_{e+2b}$ which agrees with $L_{e+2b}$ over $W$ and is trivial,  spanned by $s_1,s_2$, over $Q$. 

Choose a closed embedded surface $F$ in the interior of $Q$ representing $a$.   Since $a$ is a torsion class, $F$ has a neighborhood diffeomorphic to $F\times D^2$.  One can alter the section $s_3$ over this neighborhood using a map $D^2\to S^2$ which takes the boundary circle to $(0,0,1)$ and wraps the disk once around $S^2\subset \R^3$. Call this new (nonvanishing) section $s_{2a}$.  Then the perpendicular planes to $s_{2a}$ determine an $\R^2$ 
subbundle of $\tilde{E}_{e}$ which agrees with $\tilde L_{e+2b}$ outside the neighborhood of $F$ and has Euler class $2a+e+2b$. We denote it by $L_{2a+(e+2b)}\subset \tilde{E}_e\cong E_{\tilde e}$. By construction, $ L_{2a+(e+2b)}$ and  $\tilde L_{e}$ coincide  outside the neighborhood of $F$ and, in particular, on each boundary component of $X$. Lemma 2.12 of \cite{HK} then shows that $2a+(e+2b)\in \Sing(X,\tilde e)$.

Conversely, suppose that $\tilde e'=2a + (e+2b)\in \Sing(X,\tilde e)\subset T\times C(W,e)$. Then  there is an $\R^2$ subbundle ${L}_{\tilde e'}\subset {E}_{\tilde e}= L_{\tilde e}\oplus \epsilon$ which coincides with $ L_{\tilde e}$ over the boundary components of $X$.    We wish to isotope   ${L}_{\tilde e'}$ slightly so that it coincides with ${L}_{\tilde e}$ in a neighborhood of $Y$.

Since $Y$ is an integer homology sphere,  the restrictions of ${L}_{\tilde e}$ and ${L}_{\tilde e'}$ to $Y$ are trivial bundles. Choose a pair of non-vanishing linearly independent sections $s_1, s_2$ of $L_{\tilde e}$ over $Y$ and let $s_3$ be the section of the trivial line bundle in $E_{\tilde e}= L_{\tilde e}\oplus \epsilon$.  Similarly choose a pair of non-vanishing linearly independent sections $t_1, t_2$ of $L_{\tilde e'}$ over $Y$ and let $t_3$ be the section of the   trivial line bundle perpendicular to  $L_{\tilde e'}$.

Comparing the trivializations gives a well-defined map $f:Y\to O(3)$.  By reversing the sign of $t_3$, if necessary, we can assume $f$ maps to $SO(3)$.  If $f$ is nullhomotopic, then we can use a nullhomotopy to construct a gauge transformation on a collar neighborhood of $Y$ in $X$ which is the identity near the boundary.   Extending this to the rest of $X$ by the identity we arrive at a gauge transformation $g: E_{\tilde{e}}\rightarrow E_{\tilde{e}}$
 which, over $Y$, satisfies the desired equality $g(L_{\tilde e'})=   L_{\tilde e}$.

 A standard argument    using the bundle over the double of $W$ obtained by clutching with $f$ along $Y$ and the identity along the other boundary components, shows that  $4\deg f= e\cdot e -(e+2b)\cdot (e+2b)=0$.  Since $Y$ is an integer homology sphere this implies that $f$ is nullhomotopic. 
 
 Thus we may assume that ${L}_{\tilde e'}$ coincides with ${L}_{\tilde e}$ in a neighborhood of $Y$.  Restricting now to $W$ and applying Lemma 2.12 of \cite{HK} shows that $e+2b=\tilde{e}'|_W\in \Sing(W,e)$.
 
 We conclude that if the number of singular points of $\modE$ is odd, then the number of singular points of $\calm(X,\tilde{e})$ is also odd, since $T$ is an odd torsion group.
 
 The remaining aspects of Property \inst \
  are much simpler to verify. The energy inequality follows from  $-\tilde{e}\cdot\tilde{e}=-e\cdot e$ and the assumption that $(W,e)$ satisfies Property \inst.  For non-degeneracy, observe that each boundary component  of $Q$ is a rational homology sphere and $\tilde{e}|_Q=0$. Thus the reducible flat connection $\alpha_{\tilde{e}}$ is trivial on each $P_i\subset \partial Q$, hence non-degenerate.  Non-degeneracy of   $\alpha_{\tilde{e}}$ on $R_i$ follows from the fact that $\alpha_{\tilde{e}}|_{R_i}=\alpha_e|_{R_i}$, and our assumption.
 
 It remains to show  $\Ind^+(X,\tilde{e})>0$.  But this follows from the fact that  $\Ind^+(W,e)>0$, and the observation that $\Ind^+(X,\tilde{e})=\Ind^+(W,e)$.  This latter observation, in turn, is an immediate consequence of Equation \ref{index}, noting that $\tilde{e}\cdot\tilde{e}=e\cdot e$, and  that $\tilde{e}$ restricts to zero on each $P_i$.   Hence $(X,\tilde{e})$ has Property \inst, as desired.
\qed

\bigskip

Proposition \ref{prop:compactness} shows that if $(W,e)$ has Property \inst \ and $\Ind^+(W,e)=1$, then some boundary component $Y$ of $W$ must satisfy $\tau(Y,e|_Y)\leq -e\cdot e$, since otherwise $\modE$ would be a compact 1-manifold with an odd number of boundary points. In fact, if $(W,e)$ has Property \inst \ then there exists $Y\subset \partial W$ satisfying $\tau(Y,e|_Y)\leq -e\cdot e$, regardless of the index (see Theorem 2.17 of \cite{HK}).

In combination with  Proposition \ref{prop:defcob}, this suggests a strategy for obstructing the existence of certain 4-manifolds. Roughly speaking,  given a pair $(W,e)$ with Property \inst, partition the boundary components of $W$ into two sets. First,  those components $Y$ satisfying the Chern-Simons bound  $\tau(Y, e|_{Y})>-e\cdot e$ and second, the remaining components, all of which are integer homology spheres.  Now we attach negative definite 4-manifolds  to these latter components which, according to  Proposition \ref{prop:defcob}, preserves Property \inst.  Continuing in this fashion one builds a negative definite 4-manifold with a partition of its boundary into homology spheres and rational homology spheres satisfying the Chern-Simons bound.  If the remaining homology spheres were to bound a rational homology punctured ball $Q$, then gluing the two manifolds together would yield a 4-manifold satisfying Property \inst \ and Chern-Simons bounds on all its boundary components.   This results in a  compact moduli space with an odd number of singular points, contradicting the existence of $Q$.

We introduce some notation to formalize this outline.

\begin{definition}\label{piwcs} A triple $(W,e,b)$,  where $W$ is a compact 4-manifold with boundary, $e\in H^2(W)$,  and $0<b\in\R$ a positive real number, is said to admit a {\em $\cs$-partition} if the boundary of $W$ can be partitioned  into two disjoint sets,  $$\partial W= \partial _{\gl}W\sqcup \partial _{\cs}W$$
in such a manner that
\begin{itemize}
\item each component of $\partial _{\gl}W$ is an integer homology sphere, and
\item each component $Y\subset \partial _{\cs}W$ satisfies the Chern-Simons bound $\tau(Y, e|_Y)>b$.
\end{itemize}
\end{definition} 
In general $(W,e, b)$ need not admit any cs-partitions. Moreover, cs-partitions need not be unique since some homology sphere components of $\partial W$ could be considered either in $\partial_\gl W$ or $\partial _\cs W$ if their Chern-Simons invariants satisfy the appropriate bound.   Proposition \ref{prop:compactness} implies the following. 

\begin{prop}\label{prop:compactness2} Let $(W,e,b)$ be a triple admitting a $\cs$-partition with $W$ negative definite, $\partial_\gl W$ empty,  $0<-e\cdot e<4$, and $\Ind^+(W,e)$ positive. Then $\modE$ is compact and the cardinality of $\Sing(W,e)$  is even.
\end{prop}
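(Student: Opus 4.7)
The plan is to derive this proposition as a direct consequence of the previously stated compactness theorem (Proposition \ref{prop:compactness}) together with a standard cobordism argument about the local structure of the moduli space near its singular points. The statement has two parts---compactness and parity of $|\Sing(W,e)|$---and I would handle them in that order.

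First I would establish compactness. Since $\partial_{\gl} W$ is empty, every boundary component $Y$ of $W$ lies in $\partial_{\cs} W$. By the definition of a cs-partition, each such $Y$ satisfies $\tau(Y, e|_Y) > b$, where (with the implicit convention established by the proposition's hypotheses $0 < -e\cdot e < 4$) we take $b = -e\cdot e$, or more generally $b \ge -e\cdot e$. Combined with $\tau(Y,e|_Y) \le 4$, this is exactly the Chern-Simons bound \eqref{eq:csbound} required by Proposition \ref{prop:compactness}, which immediately gives compactness of $\modE$.

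For the parity claim, I would use the hypothesis $\Ind^+(W,e) > 0$ to write $\Ind^+(W,e) = 2N+1$ and invoke the local model for $\modE$: after a generic metric perturbation, $\modE$ is a smooth $(2N+1)$-manifold away from the finite set $\Sing(W,e)$, and each singular point has a neighborhood homeomorphic to the open cone on $\C P^N$. Since we have just shown $\modE$ is compact, excising open cone neighborhoods of the singular points produces a compact manifold $\mathcal{M}'$ of dimension $2N+1$ whose boundary is a disjoint union of $|\Sing(W,e)|$ copies of $\C P^N$. Hence the class $|\Sing(W,e)|\cdot [\C P^N]$ vanishes in the appropriate cobordism group. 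When $N$ is even, $[\C P^N]\ne 0$ in $\Omega^{O}_{2N}$ (detected by the nonvanishing Stiefel--Whitney number $w_{2N}(\C P^N) = N+1 \equiv 1 \pmod 2$), forcing $|\Sing(W,e)|$ to be even. When $N$ is odd, one exploits the natural orientation on $\modE$ (coming from a trivialization of the determinant line of the instanton deformation complex) and invokes the ``refined argument" referenced earlier in the paper.

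The main obstacle is the $N$ odd case: $\C P^N$ is null-cobordant in $\Omega^O_{2N}$ for odd $N$, so the unoriented argument alone cannot settle it and one genuinely needs the orientation data on $\modE$. The $N=0$ case, which is all that is required for Theorem~1, is immediate---a compact $1$-manifold has an even number of boundary points---so the bookkeeping of orientations does not arise for the applications in this paper.
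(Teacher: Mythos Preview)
Your proposal is correct and follows exactly the approach the paper takes: the paper's own justification is the one-line ``Proposition \ref{prop:compactness} implies the following,'' together with the earlier discussion of cones on $\C P^N$, and you have simply filled in those implicit details. Your observation that the statement tacitly requires $b \ge -e\cdot e$ (so that the cs-partition bound feeds into the hypothesis of Proposition \ref{prop:compactness}) is accurate, and this is satisfied in every application in the paper.
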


We   use the notation $e^2=e\cdot e\in \Q$ for a class $e\in H^2(W)$; note that since $W$ is typically assumed to be negative definite, $e^2\leq 0$.
Proposition \ref{prop:defcob} has the following corollary.

\begin{corollary} \label{cor:defcob2} Suppose $(W,e)$ satisfies Property \inst \ and $(W,e,-e^2)$ admits a cs-partition.    Suppose  that $Q$ is a compact, connected, negative definite 4-manifold with $\Hrest(Q;\Z/2)=0$ and suppose $(Q,0, -e^2)$ admits a cs-partition  $\partial_\gl Q\sqcup \partial_\cs Q$.  Finally, suppose  $Y$ is an integer homology sphere such that $Y\subset \partial_\gl W$ and $-Y\subset \partial_\gl Q$.
\vskip0.03in
Then $(X=Q\underset{Y}\cup W,\tilde e)$ satisfies Property \inst \  and $(X,\tilde e, -e^2)$ admits a cs-partition with  \begin{enumerate}\label{infinite}
\item $\tilde{e}=0\oplus e \in H^2(X),$
\item $\partial_\gl X=  (\partial_{\gl} Q \sqcup \partial_\gl W)\setminus Y,$ and
\item $\partial_\cs X = \partial_\cs Q \sqcup \partial_\cs W$.
\end{enumerate}\qed
\end{corollary}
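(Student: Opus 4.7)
The proof is essentially a combination of Proposition \ref{prop:defcob} with a bookkeeping exercise on the cs-partitions, so the plan splits into two steps.

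First, I would verify that Property \inst \ transfers to $(X,\tilde{e})$ by a direct appeal to Proposition \ref{prop:defcob}. The hypotheses of that proposition are all in hand: $(W,e)$ has Property \inst \ by assumption, $Y\subset \partial_\gl W\subset \partial W$ is an integer homology sphere (since the homology sphere condition is built into the definition of $\partial_\gl W$), $Q$ is a negative definite 4-manifold with $\Hrest(Q;\Z/2)=0$, and $-Y\subset \partial_\gl Q\subset \partial Q$. Hence Proposition \ref{prop:defcob} yields that $(X,\tilde{e})$ has Property \inst \ with $\tilde{e}=0\oplus e\in H^2(Q)\oplus H^2(W)\cong H^2(X)$, giving conclusion (1) together with Property \inst.

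Second, I would check that the partition $\partial X=((\partial_\gl Q\sqcup\partial_\gl W)\setminus Y)\sqcup(\partial_\cs Q\sqcup\partial_\cs W)$ satisfies the two conditions in Definition \ref{piwcs} with parameter $b=-e^2$. The identification of $\partial X$ with $(\partial Q\sqcup\partial W)\setminus (Y\sqcup -Y)$ is just the usual description of the boundary of a glued manifold, and this obviously matches the proposed $\partial_\gl X\sqcup \partial_\cs X$, giving (2) and (3). Every component of $\partial_\gl X$ is an integer homology sphere because it was already such a component in either the original cs-partition of $W$ or that of $Q$. For the Chern-Simons bound on $\partial_\cs X$, the key point is that $\tilde{e}$ restricts to $e$ on $\partial W$ and to $0$ on $\partial Q$: so for $Z\subset\partial_\cs W$ one has $\tau(Z,\tilde{e}|_Z)=\tau(Z,e|_Z)>-e^2$ by the hypothesis that $(W,e,-e^2)$ admits a cs-partition, and for $Z\subset\partial_\cs Q$ one has $\tau(Z,\tilde{e}|_Z)=\tau(Z,0)>-e^2$ by the hypothesis that $(Q,0,-e^2)$ admits a cs-partition.

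Since essentially all the analytic content lives in Proposition \ref{prop:defcob}, there is no real obstacle here; the only thing to be careful about is the identification $\tilde{e}|_{\partial W}=e|_{\partial W}$ and $\tilde{e}|_{\partial Q}=0$, which follows immediately from the orthogonal splitting $H^2(X)\cong H^2(Q)\oplus H^2(W)$ available because $Y$ is an integer homology sphere (the same Mayer--Vietoris splitting used in the proof of Proposition \ref{prop:defcob}). With these two observations the Chern--Simons bounds transfer verbatim and the corollary follows.
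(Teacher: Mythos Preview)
Your proposal is correct and matches the paper's approach: the paper marks this corollary with a \qed\ immediately after the statement, indicating it follows directly from Proposition~\ref{prop:defcob} together with the straightforward bookkeeping on the cs-partitions that you spell out. There is nothing to add.
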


 As an illustration of these ideas,  we have the following result.
\begin{prop} \label{infinite2} Let $Y$ be an integer homology sphere.  Suppose there is a pair $(W,e)$   with Property \inst, and    $(W,e,-e^2)$ admits a   $\cs$-partition  with $Y=\partial_\gl W$. Further suppose that there is a negative definite 4-manifold $W'$, and     $(W',0,-e^2)$   admits   a   $\cs$-partition with $Y=\partial_\gl W$.   Then no multiple of $Y$ bounds a 4-manifold with the $\Z/2$-homology of a punctured ball.
\end{prop}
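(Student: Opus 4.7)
The plan is to argue by contradiction. Fix $n\geq 1$ and suppose $n$ disjoint copies of $Y$ bound a 4-manifold $Q$ with the $\Z/2$-homology of a punctured ball; replacing $Q$ by $-Q$ if necessary, we may assume that $\partial Q$ consists of $n$ copies of $-Y$. Such a $Q$ is automatically negative definite with $H_1(Q;\Z/2)=0$, and since each boundary component is an integer homology sphere, the triple $(Q,0,-e^2)$ admits the trivial cs-partition $\partial_\gl Q = n(-Y)$, $\partial_\cs Q = \emptyset$.

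We now build a closed-up negative definite 4-manifold by iterated application of Corollary \ref{cor:defcob2}. First glue $(W,e)$ to $Q$ by identifying $Y \subset \partial_\gl W$ with one copy of $-Y\subset \partial_\gl Q$. The result $X_1 = Q \cup_Y W$ satisfies Property \inst\ with class $\tilde e_1 = 0 \oplus e$ (in particular $\tilde e_1 \cdot \tilde e_1 = e \cdot e$ and $\Ind^+(X_1,\tilde e_1)>0$) and admits the cs-partition $\partial_\gl X_1 = (n-1)(-Y)$, $\partial_\cs X_1 = \partial_\cs W$. For each remaining $-Y$ boundary component, apply the corollary again, now with the current manifold in the ``$W$''-role and a fresh copy of $W'$ in the ``$Q$''-role; here $Y = \partial_\gl W'$ pairs with a $-Y$ component from the previous step, and the hypotheses (negative definiteness, vanishing $\Z/2$-first homology, cs-partition) are in place by assumption. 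After $n-1$ such gluings we arrive at a $4$-manifold $X_n$ satisfying Property \inst, with associated class $\tilde e$ still obeying $\tilde e \cdot \tilde e = e \cdot e$, and with cs-partition $\partial_\gl X_n = \emptyset$, $\partial_\cs X_n = \partial_\cs W \sqcup (n-1)\,\partial_\cs W'$.

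Finally, apply Proposition \ref{prop:compactness2}: since $X_n$ is negative definite, $\partial_\gl X_n$ is empty, $0<-\tilde e\cdot\tilde e<4$ (the upper bound coming from $-e\cdot e<2$ in Property \inst), and $\Ind^+(X_n,\tilde e)>0$, the moduli space $\calm(X_n,\tilde e)$ is compact and the cardinality of $\Sing(X_n,\tilde e)$ is even. But Property \inst\ forces this count to be odd, the desired contradiction. The main point requiring care is the inductive bookkeeping: at each gluing one must check that negative definiteness, the vanishing of $H_1(\cdot\,;\Z/2)$, the cs-partition structure, and Property \inst\ all propagate through repeated gluings along integer homology spheres. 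Each of these propagations is precisely the content built into Corollary \ref{cor:defcob2}, which was designed for exactly this kind of iteration.
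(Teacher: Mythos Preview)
Your proof is correct and follows essentially the same approach as the paper's: assume a putative $\Z/2$-homology punctured ball $Q$ with boundary $n$ copies of $\pm Y$, orient so the boundary consists of copies of $-Y$, glue one copy of $W$ and $n-1$ copies of $W'$ using Corollary~\ref{cor:defcob2} iteratively, and derive a contradiction from Proposition~\ref{prop:compactness2} versus Property~\inst. The only cosmetic difference is that the paper describes the gluing all at once rather than sequentially.
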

\begin{proof} Suppose, to the contrary, that there exists a 4-manifold $Q$ satisfying $H^i(Q;\Z/2)=0$ for $i=1,2$ and with  $\partial Q=nY$, where $n Y= \sign(n) Y\sqcup \cdots \sqcup \sign(n) Y$ denotes the disjoint union of $|n|$ copies of $Y$ or $-Y$, with the orientation of $Y$ depending on the sign of $n$.   Note that by reversing the orientation of $Q$, if necessary, we may assume that $n<0$.

Now to one  boundary component of $Q$ attach a copy of $W$ and to the remaining components attach copies of $W'$.  This  produces a manifold $X$ satisfying $H^2(X)\cong H^2(Q)\oplus H^2(W)\oplus_{i=1}^{n-1} H^2(W')$.   Let $\tilde e=0\oplus e\oplus 0\oplus 0\oplus \cdots\oplus  0$ with respect to this decomposition.  Induction and Corollary \ref{cor:defcob2} imply that $(X,\tilde e)$ satisfies Property \inst\ and that $(X, \tilde e, -e^2)$ admits a cs-partition with  with $\partial_\gl X$ empty. Proposition \ref{prop:compactness2} therefore implies that $\calm(X,\tilde e)$ is compact with an even number of singular points. But since $(X,\tilde e)$ satisfies Property \inst, 
$\calm(X,\tilde e)$ has an odd number of singular points, a contradiction.\end{proof}

The above proposition is useful in showing that a particular    knot has infinite order in the   concordance group.  There are many techniques for proving such a result (see Section \ref{compare}), but 
our broader aim is to develop a practical method to prove that a set of   knots are independent.  The following result will be instrumental in accomplishing this goal.

 \begin{theorem}\label{thm:indcriterion}
Let $\{Y_i\}_{i=1}^M$ be a set of oriented integer homology spheres. For each $i$, suppose there is  a pair $(W_i,e_i)$   with Property \inst  and a negative definite 4-manifold $W_i'$, so  that    $(W_i,e_i,-e_i^2)$   and $(W_i',0,-e_i^2)$  admit   $\cs$-partitions with $Y_i=\partial_\gl W_i=\partial_\gl W'_i$.    If, after reindexing if necessary, 
 $$  -e_i^2 < \tau(\pm Y_j)  \ \ \mathrm{for \ all\ \ }  j<i,$$
then no combination $$ n_1 Y_1 \sqcup n_2 Y_2 \sqcup ... \sqcup n_M Y_M, \ n_i\in \Z$$ bounds a 4-manifold with the $\Z/2$-homology of a punctured 4-ball.
\end{theorem}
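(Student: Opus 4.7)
The plan is to argue by contradiction, building on the strategy of Proposition \ref{infinite2}. Assume, to the contrary, that some non-trivial combination $\bigsqcup_i n_i Y_i$ bounds a compact 4-manifold $Q$ with the $\Z/2$-homology of a punctured 4-ball. Let $i^*$ denote the largest index with $n_{i^*}\neq 0$, and, after reversing the orientation of $Q$ if necessary, arrange that $n_{i^*}<0$, so that $\partial Q$ contains at least one copy of $-Y_{i^*}$. Equip $Q$ with the trivial class $0\in H^2(Q)$ and the cs-partition $\partial_\gl Q=\partial Q$, $\partial_\cs Q=\emptyset$; this is permitted since every component of $\partial Q$ is a homology sphere.

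The bulk of the argument is to apply Corollary \ref{cor:defcob2} repeatedly in order to absorb $Q$ into a closed-up object with $\partial_\gl$ empty. First I glue $(W_{i^*},e_{i^*})$ to $Q$ along one component $-Y_{i^*}\subset\partial_\gl Q$; the resulting pair satisfies Property \inst\ and inherits a cs-partition with threshold $-e_{i^*}^2$. I then inductively cap off the remaining $|n_{i^*}|-1$ copies of $-Y_{i^*}$ by attaching copies of $W'_{i^*}$, each equipped with the trivial class; by Corollary \ref{cor:defcob2}, each such step preserves Property \inst\ and the cs-partition at threshold $-e_{i^*}^2$.

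Call the resulting negative definite 4-manifold $X$ and the associated class $\tilde e$. Its boundary now consists of $\partial_\cs W_{i^*}$, several copies of $\partial_\cs W'_{i^*}$, and the leftover components $\sign(n_j)Y_j$ with $j<i^*$. The class $\tilde e$ restricts trivially to each such $\sign(n_j)Y_j$, and by the reindexing hypothesis $\tau(\pm Y_j)>-e_{i^*}^2$ for every $j<i^*$, so all these components satisfy the required Chern-Simons bound and may be placed in $\partial_\cs X$. Consequently $(X,\tilde e,-e_{i^*}^2)$ admits a cs-partition with $\partial_\gl X$ empty, while $(X,\tilde e)$ still satisfies Property \inst.

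Finally I invoke Proposition \ref{prop:compactness2}: the moduli space $\calm(X,\tilde e)$ is compact and contains an even number of singular points. This directly contradicts Property \inst, which forces the number of singular points to be odd. The main bookkeeping challenge is tracking orientations and Chern-Simons bounds through the successive gluings, but the cs-partition formalism packaged in Corollary \ref{cor:defcob2} makes this essentially automatic; the key structural input is that the class $\tilde e$ is supported on a single summand $W_{i^*}$, so it restricts to zero on all of the problematic $Y_j$'s (for $j<i^*$) that must be handled via Chern-Simons bounds rather than cobordism.
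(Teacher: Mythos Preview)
Your proof is correct and follows essentially the same route as the paper's. The only cosmetic difference is bookkeeping: the paper places the components $\pm Y_j$ with $j<i^*$ into $\partial_\cs Q$ \emph{before} gluing (using the hypothesis $-e_{i^*}^2<\tau(\pm Y_j)$ to give $(Q,0,-e_{i^*}^2)$ the cs-partition $\partial_\gl Q=|n_{i^*}|(-Y_{i^*})$), whereas you put everything into $\partial_\gl Q$ first and re-partition at the end; since cs-partitions need not be unique, both orderings are legitimate.
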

\begin{proof}  Suppose, to the contrary, that there exists a  4-manifold $Q$ with  the $\Z/2$-homology of a punctured 4-ball and  $ \partial Q= n_1 Y_1 \sqcup n_2 Y_2 \sqcup ... \sqcup n_M Y_M$.   By reorienting $Q$ if necessary we may assume that $n_M<0$.  

Consider the triple $(Q, 0, -e_M^2)$.  This admits a $\cs$-partition with  $\partial_\gl Q= |n_M|(-Y_M)$, by our assumption that $-e_M^2<\tau(\pm Y_i)$ for $i<M$. Let $X$ be the 4-manifold obtained by gluing
 $W_M$ and $(|n_M|-1) W_M'$ to $Q$ along  $\partial_\gl Q$.  Let $\tilde e\in H^2(X)$ be the unique class whose restriction to  $W_M$ equals $e_M$ and whose restriction to $Q$ and each copy of  $W_M'$  equals zero.  
 
Corollary \ref{cor:defcob2} and induction then shows that $(X, \tilde e)$ satisfies Property \inst\ and $(X, \tilde e, -e_M^2)$ admits a $\cs$-partition with $\partial_\gl X$ empty.   But   since $(X,\tilde e)$ satisfies Property \inst, 
$\calm(X,\tilde e)$ has an odd number of singular points, contradicting Proposition \ref{prop:compactness2}.
\end{proof}

\section{The topology of branched covers of Whitehead doubles}\label{2fold}

Theorem $1$ is proved by using the instanton cobordism obstruction to show that 2-fold branched covers of linear combinations of Whitehead doubles cannot bound $\Z/2$-homology balls.  To apply the obstruction, it will be useful to understand the 2-fold branched covers of Whitehead doubles from a topological perspective.   We begin by recalling some background and establishing notation.   

\subsection{Preliminaries and Notation}
\label{subsec:prelim}

 We will often work in the context of oriented manifolds with boundary.  Boundaries of all manifolds will be oriented by the outward normal first convention.   

Given an oriented knot $K\subset S^3$, let $\doub(K)$ denote the {\em positive-clasped untwisted Whitehead double} of $K$.  This is a satellite knot of $K$, defined as follows. Consider the Whitehead link $A\cup C\subset S^3$, shown in Figure \ref{fig1}.   The component labeled $A$ is unknotted, and thus its complement $ S^3\setminus n(A)$ (we use $n(K)$ throughout to denote an open tubular neighborhood of  $K$) is homeomorphic to a solid torus $D^2 \times S^1$ in such a way that the meridian of $A$ corresponds to $\{1\}\times S^1$ and the longitude of $A$ corresponds to $\partial D^2\times \{1\}$.  Identify this solid torus with a neighborhood of $K$ by a homeomorphism which takes  the meridian of $A$  to the (Seifert) longitude for $K$  and the longitude of $A$ to the meridian for $K$.   Under this homeomorphism, the image of $C$ becomes a knot $\doub(K)\subset S^3$, the positive-clasped untwisted Whitehead double of $K$. 

\begin{figure}
\psfrag{A}{$A$}
\psfrag{C}{$C$}
\begin{center}
 \includegraphics [height=110pt]{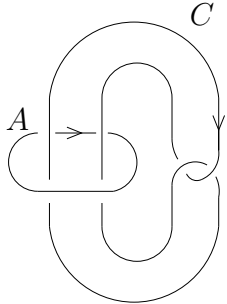}
\caption{\label{fig1} The Whitehead link. Identifying $S^3\setminus n(A)$ with $n(K)$ allows us to view $C$ as a satellite knot living in $n(K)$.  This satellite is the Whitehead double $D(K)$.}
\end{center}
\end{figure}

As mentioned in the introduction, the set of oriented knots modulo the concordance relation forms an abelian group  $\mathcal{C}$.  Addition in $\mathcal{C}$ is played by connected sum of knots, and the inverse operation corresponds to taking the reverse mirror image; that is, in a given projection of $K$ we simultaneously change all crossings (from over to under and vice versa) and reverse the orientation of $K$ as a 1-manifold. 
In light of this, we let $-K$ denote the reverse mirror image of $K$ and $K_1+K_2$ denote the connected sum of $K_1$ with $K_2$.  Thus $K_1-K_2$ means $K_1+(-K_2)$.

In a similar vein, we use the notation $M_1+M_2$ for the oriented connected sum of oriented 3-manifolds, and $-M$ for the manifold $M$ with its orientation reversed,  so that  $M_1-M_2=M_1+(-M_2)$.     The notation is motivated by the fact that 3-manifolds, like knots, can be endowed with group structures if we take into account various cobordism relations.  For our purposes,  two oriented 3-manifolds, $M_1,M_2,$  satisfying $H_*(M_i;\Z/2)\cong H_*(S^3;\Z/2)$ are called {\em $\Z/2$-homology cobordant} if  there is a smooth oriented 4-manifold $Q$ satisfying $\partial Q= -M_1 \sqcup M_2$ and $H_*(Q;\Z/2)\cong H_*(S^3\times I;\Z/2)$.  The set of $\Z/2$-homology cobordism classes form a group called the {\em $\Z/2$-homology cobordism group}, denoted $\Theta_{\Z/2}$.

We will use $\branch (K)$ to denote the   2-fold branched cover of $S^3$, branched over $K$.  Since the Alexander polynomial of  $\doub(K)$ equals 1, every cyclic branched cover of  $\doub(K)$  is an integer homology 3-sphere and, in particular, gives rise to an element in $\Theta_{\Z/2}$.

\medskip

 The following well-known lemma  obstructs sliceness by studying the $\Z/2$-homology cobordism class of a knot's 2-fold branched cover.  For a proof, see \cite[Lemma 2]{cassongordon}.

\begin{lemma}\label{branchslice}
Let $K\subset S^3$ be a knot.  Then $\branch(K)$ is a $\Z/2$ homology 3-sphere; that is, $H_*(\branch(K);\Z/2)\cong H_*(S^3;\Z/2)$.   If $K$ is slice, then $\branch (K)=\partial Q$, where $Q$ is a $\Z/2$-homology 4-ball,  $H_*(Q;\Z/2)\cong H_*(B^4;\Z/2)$.\qed
\end{lemma}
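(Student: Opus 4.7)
The first assertion---that $\branch(K)$ is a $\Z/2$-homology sphere---follows from the classical calculation that a presentation matrix for $H_1(\branch(K);\Z)$ is $V+V^T$, where $V$ is any Seifert matrix for $K$, whence $|H_1(\branch(K);\Z)| = |\Delta_K(-1)|$ equals the (always odd) knot determinant. Hence $H_1(\branch(K);\Z)$ is a finite group of odd order, so $H_1(\branch(K);\Z/2)=0$, and Poincar\'e duality supplies the remaining $\Z/2$-Betti numbers.

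For the second claim, the plan is to produce an explicit $\Z/2$-homology ball by taking the $2$-fold cyclic branched cover $Q$ of $B^4$ over a smooth slice disk $D \subset B^4$ for $K$.  By construction $\partial Q = \branch(K)$, so the task reduces to verifying $H_*(Q;\Z/2) \cong H_*(B^4;\Z/2)$.  I would decompose $Q = \tilde X \cup_{D \times S^1} (D \times D^2)$, where $\tilde X \to X := \overline{B^4 \setminus n(D)}$ is the unbranched $2$-fold cover corresponding to the surjection $\pi_1(X) \twoheadrightarrow H_1(X;\Z/2) = \Z/2$.  The opening move is to compute $H_*(X;\Z) \cong H_*(S^1;\Z)$, with $H_1(X)$ generated by the meridian $\mu$; this falls out of the Thom-isomorphism long exact sequence of the pair $(B^4, B^4 \setminus D)$.

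The crucial middle step is computing $H_*(\tilde X;\Z/2)$.  For this I would invoke the long exact sequence in homology arising from the short exact sequence of $\Z/2[\Z/2]$-coefficient systems
\[ 0 \to \Z/2 \to \Z/2[\Z/2] \to \Z/2 \to 0, \]
noting that in characteristic two the sign representation coincides with the trivial one.  This unfolds to a long exact sequence
\[ \cdots \to H_i(X;\Z/2) \to H_i(\tilde X;\Z/2) \to H_i(X;\Z/2) \to H_{i-1}(X;\Z/2) \to \cdots, \]
and a short chase using $H_*(X;\Z/2) \cong H_*(S^1;\Z/2)$ forces $H_*(\tilde X;\Z/2) \cong H_*(S^1;\Z/2)$.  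A concluding Mayer--Vietoris calculation for the decomposition of $Q$ then delivers the vanishing of $H_i(Q;\Z/2)$ in positive degrees, the essential input being that the meridian of the lifted branch set $\tilde D$ generates $H_1(\tilde X;\Z/2)$.  This geometric fact is a consequence of the observation that $\tilde X \to X$ restricts to the degree-two self-map of the normal circle to $D$, so the upstairs meridian of $\tilde D$ represents the lift of $\mu^2$, which generates $H_1(\tilde X;\Z) \cong \Z$.

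The main technical subtlety is the middle step: passing from the $\Z$-homology of $X$ to the $\Z/2$-homology of its $2$-fold cover $\tilde X$.  Knowing $H_*(X;\Z)$ alone does not determine $H_*(\tilde X;\Z/2)$, and the twisted-coefficient long exact sequence above is tailor-made for effecting this transition in characteristic two.  Once $H_*(\tilde X;\Z/2)$ is in hand, the framing Mayer--Vietoris calculations are straightforward diagram chases.
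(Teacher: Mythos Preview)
The paper does not supply its own proof of this lemma; it simply cites \cite[Lemma~2]{cassongordon} and marks the statement with a \qed. Your argument is a correct and standard route to the result.

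One point deserves tightening. In your final paragraph you assert that the upstairs meridian generates $H_1(\tilde X;\Z)\cong\Z$. The isomorphism $H_1(\tilde X;\Z)\cong\Z$ need not hold: for a slice disk whose complement has complicated fundamental group, $H_1(\tilde X;\Z)$ can acquire (necessarily odd) torsion. Fortunately you do not need the integral statement. What you actually need is that the upstairs meridian generates $H_1(\tilde X;\Z/2)\cong\Z/2$, and this follows directly from your own long exact sequence. You showed that the map $H_1(X;\Z/2)\to H_1(\tilde X;\Z/2)$ induced by the inclusion $\Z/2\hookrightarrow\Z/2[\Z/2]$, $1\mapsto 1+t$, is an isomorphism; on the chain level this map sends a simplex to the sum of its two lifts, i.e.\ it is the transfer. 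The transfer of $[\mu]$ is the class of the connected preimage $p^{-1}(\mu)$, which is precisely the upstairs meridian. With this adjustment the Mayer--Vietoris step goes through as you wrote it.
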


Note that $\branch (-K)=-\branch (K)$, and $\branch (K_1+ K_2)=\branch (K_1)+\branch (K_2)$ (for the latter statement, the reducing sphere in the branched cover is obtained as the branched cover of the sphere used in the non-ambient definition of $K_1+K_2$).  Together with the lemma, these observations show that the 2-fold branched cover operation induces a homomorphism:
$$ \Sigma:  \mathcal{C}\rightarrow \Theta_{\Z/2}.$$
As mentioned, we will prove Theorem $1$ by showing that the image in $\Theta_{\Z/2}$ of linear combinations of Whitehead doubles are not zero.  For this we have the following corollary of Theorem \ref{thm:indcriterion}.  

  \begin{corollary}\label{cor:indknot}
  Let $\{K_i\}_{i=1}^M$ be a set of knots.  For each $i$, suppose there is  a pair $(W_i,e_i)$   with Property \inst  and a negative definite 4-manifold $W_i'$, so  that    $(W_i,e_i,-e_i^2)$   and $(W_i',0,-e_i^2)$  admit   $\cs$-partitions with $\Sigma(K_i)=\partial_\gl W_i=\partial_\gl W'_i$.    If $$  -e_i^2 < \tau(\pm \Sigma(K_j))  \ \ \mathrm{for \ all\ \ }  j<i,$$
then $\{K_i\}_{i=1}^M$ is independent in $\mathcal{C}$.
\end{corollary}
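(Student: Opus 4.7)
My plan is to reduce Corollary \ref{cor:indknot} directly to Theorem \ref{thm:indcriterion} via the homomorphism $\Sigma\colon \mathcal{C}\to \Theta_{\Z/2}$ induced by taking $2$-fold branched covers. Suppose, for contradiction, that some integer combination $K=\sum_{i=1}^M n_i K_i$, with not all $n_i=0$, is slice in $\mathcal{C}$. Since $\Sigma$ sends connected sums to connected sums and mirror-reversal to orientation-reversal, $\Sigma(K)$ is the connected sum of $\mathrm{sign}(n_i)\Sigma(K_i)$ taken $|n_i|$ times for each $i$, and Lemma \ref{branchslice} provides a $\Z/2$-homology $4$-ball $Q$ with $\partial Q=\Sigma(K)$.

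The bridge step is to convert $Q$ into a $4$-manifold $Q'$ whose boundary is the disjoint union $\bigsqcup_{i=1}^M n_i \Sigma(K_i)$ (in the notation of Proposition \ref{infinite2}), while retaining the $\Z/2$-homology of a punctured $4$-ball. I would do this by attaching a $4$-dimensional $3$-handle to $Q$ along each connected-sum $2$-sphere embedded in $\partial Q$; each such attachment reverses one connected-sum decomposition of the boundary. Because $H^2(Q;\Z/2)=0$, every such attaching $2$-sphere is null-homologous in $Q$, so each $3$-handle contributes an independent generator to $H_3(Q';\Z/2)$ while leaving $H_1$ and $H_2$ untouched. A direct Mayer--Vietoris computation then confirms that $Q'$ has the $\Z/2$-homology of a punctured $4$-ball with the appropriate number of boundary components.

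With $Q'$ in hand, the hypotheses of the corollary---existence of pairs $(W_i,e_i)$ with Property \inst\ and negative definite $W_i'$ admitting $\cs$-partitions with $\Sigma(K_i)=\partial_\gl W_i=\partial_\gl W_i'$, together with the Chern--Simons inequalities $-e_i^2<\tau(\pm\Sigma(K_j))$ for $j<i$---are precisely those of Theorem \ref{thm:indcriterion} applied to the integer homology spheres $Y_i=\Sigma(K_i)$. That theorem thus prohibits the existence of $Q'$, yielding the desired contradiction. The main obstacle in this argument is the bridge step: although geometrically transparent, it requires care to track the Mayer--Vietoris bookkeeping across iterated handle attachments and to handle the sign conventions encoded by the $n_i$. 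Modulo this, the corollary follows formally from the independence criterion of Theorem \ref{thm:indcriterion} and the algebraic properties of $\Sigma$.
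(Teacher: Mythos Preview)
Your proposal is correct and follows essentially the same route as the paper's proof: assume a slice relation, apply Lemma~\ref{branchslice} to obtain a $\Z/2$-homology ball bounded by the connected sum of branched covers, attach $3$-handles along the reducing spheres to pass to the disjoint union, and then invoke Theorem~\ref{thm:indcriterion}. The paper treats the bridge step more tersely (simply asserting that the resulting $4$-manifold has the $\Z/2$-homology of a punctured ball), whereas you spell out the Mayer--Vietoris bookkeeping, but the argument is the same.
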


 \begin{proof}  If $n_1K_1\#\cdots \# n_MK_M$ is slice for some $n_i\in \Z$, then Lemma \ref{branchslice} shows that $$\Sigma(n_1K_1\#\cdots \# n_MK_M)\cong n_1\Sigma(K_1)\#\cdots\# n_M(K_M)=\partial Q,$$ where $Q$ is a $\Z/2$-homology ball.  There is a natural cobordism from the connected sum of 3-manifolds to the disjoint union, obtained by attaching 3-handles along the reducing 2-spheres of the prime decomposition.  Attaching this cobordism to the boundary of $Q$  results in a $4$-manifold with the $\Z/2$-homology of a punctured ball, and whose boundary is $$n_1\Sigma(K_1)\sqcup \cdots \sqcup n_M(K_M).$$ Theorem \ref{thm:indcriterion} yields the result.
 \end{proof}

\subsection{Decomposition of $\branch(\doub(K))$ and presentation of $\pi_1$}

This subsection has two purposes.  The first is to obtain a description of the 2-fold branched cover of the Whitehead double of $K$ as a union of three simple pieces: two copies of $S^3\setminus n(K)$, together with the complement of the $(2,4)$ torus link.    This decomposition will be used throughout the paper.   With an eye towards calculating the Chern-Simons invariants, the second aim is to find a suitable presentation of the fundamental group of the $(2,4)$ torus link complement.

 To begin our analysis, note that $\doub(K)$ can be described in a slightly different way.  Namely, we can construct $\doub(K)$ by replacing a tubular neighborhood of the $A$ component of the Whitehead link by 
$S^3\setminus n(K)$.  After this identification, the $C$ component represents $\doub(K)$.  To make this precise, we must specify the way that boundary tori are identified.   To this end, denote the meridian  and longitude  of $K$ by $\mu_K$ and  $\lambda_K$, respectively.  Similarly, denote the  meridian and longitude of the $A$ component of the Whitehead link by $\mu_A$ and $\lambda_A$.  Then the union 
$$ S^3\setminus n(K) \underset{{(\mu_K,\lambda_K)=(\lambda_A,\mu_A)}}\cup S^3\setminus n(A) $$
is homeomorphic to the 3-sphere, and the image of $C$ under this identification is equivalent to $\doub(K)$.  The notation is meant to indicate that the two manifolds are glued by a diffeomorphism of their boundary tori which interchanges meridian-longitude pairs.  Using this description of the Whitehead double, we can easily obtain the following decomposition of its 2-fold branched cover.

 \begin{prop} \label{2foldcover} Given a knot $K\subset S^3$, the 2-fold branched cover $\branch (\doub(K))$ of $S^3$ branched over $\doub(K)$  has a decomposition (see Figure \ref{fig2})
$$\branch (\doub(K))= X_1\underset{T_1}\cup Y\underset{T_2}\cup  X_2,$$
where 
\begin{enumerate}
\item $X_i= S^3\setminus n(K)$, for $i=1,2$. The framing of $\partial X_i$ is inherited from the standard meridian-longitude for  $K$, and denoted $\mu_{K_i}, \lambda_{K_i}$.
\item $Y=S^3\setminus n(A_1\cup A_2)$, where $A_1\cup A_2\subset S^3$ is the $(2,4)$ torus link, as in Figure \ref{fig2}.  
\item $T_i$ is oriented so that $\mu_{K_i}\cdot \lambda_{K_i}=1$, $i=1,2$.
\item  $X_1,Y,$ and $X_2$ are oriented so that $\partial X_1=-T_1 $, $\partial X_2=-T_2$, $\partial Y=T_1\sqcup T_2$.  
\item  The gluing identifications $\partial X_i\subset \partial Y$ are specified by 
$$\mu_{K_i}=\mu_{A_i}^{-2}\lambda_{A_i}\text{ and }\lambda_{K_i}=\mu_{A_i}\text{ for }i=1,2,$$
where  $\mu_{A_i},\lambda_{A_i}, i=1,2$ denote the  standard meridian-longitude pairs for  $A_1\cup A_2$.  
\end{enumerate}

\end{prop}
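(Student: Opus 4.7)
My plan is to analyze the branched cover piece-by-piece using the decomposition of $S^3$ along the satellite torus $\partial n(K)$. First I unpack the satellite description: identify $n(K)\cong S^3\setminus n(A)$ by the homeomorphism swapping meridian and longitude, so that $\doub(K)$ corresponds to $C$. The Whitehead pattern is untwisted, i.e.\ $\lk(A,C)=0$; equivalently $[\doub(K)]=0\in H_1(n(K))$. Both $\mu_K$ and $\lambda_K$ therefore have linking number zero with $\doub(K)$ in $S^3$: $\mu_K$ bounds a meridian disk in $n(K)$ meeting $\doub(K)$ algebraically zero times, while $\lambda_K$ bounds a Seifert surface for $K$ in $S^3\setminus n(K)$ disjoint from $\doub(K)$.

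Consequently, every loop on $\partial n(K)$ lifts in the cover $\branch(\doub(K))\to S^3$. Since the cover is unbranched over $S^3\setminus n(K)$ and trivial there, the preimage splits into two copies $X_1,X_2\cong S^3\setminus n(K)$ separated by two lifted tori $T_1,T_2$. The remaining piece $Y$ is $\Sigma_2(n(K),\doub(K))=\Sigma_2(S^3\setminus n(A),C)$, obtained from $\Sigma_2(S^3,C)=S^3$ (because $C$ is unknotted) by deleting the preimage of $n(A)$. Since $\lk(A,C)=0$, this preimage is a disjoint union $n(A_1)\sqcup n(A_2)$ of two solid tori, the cover restricting to a homeomorphism on each; so $Y=S^3\setminus n(A_1\cup A_2)$.

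The heart of the proof is the identification $A_1\cup A_2=T(2,4)$. I would do this via an explicit diagram: draw the Whitehead link with $C$ a round planar unknot bounding a flat disk $D$, and realize $\Sigma_2(S^3,C)$ by cutting $S^3$ along $D$ and regluing two copies. Pushing $A$ through this construction, the two clasps of the Whitehead link force the two lifts of $A$ to wrap around a common standardly embedded torus as a pair of $(1,2)$-curves, giving $T(2,4)$. The same diagrammatic picture pins down the framings: $\mu_A$ lifts to the standard meridian $\mu_{A_i}$ (since $n(A)\to n(A_i)$ is a homeomorphism), while the base Seifert longitude $\lambda_A$ lifts to the curve $\mu_{A_i}^{-2}\lambda_{A_i}$ on $\partial n(A_i)$, the $-2$ being the self-linking of a $(1,2)$-curve on the torus with respect to its Seifert push-off in the cover $S^3$. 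Combining with the identifications $\mu_K=\lambda_A$ and $\lambda_K=\mu_A$ gives (5); the orientation conventions (3) and (4) are then forced by the requirement that the covering map be orientation preserving. The substantive technical obstacle is the explicit identification $A_1\cup A_2=T(2,4)$ together with the framing-defect computation; everything else is formal covering-space bookkeeping.
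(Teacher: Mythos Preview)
Your proposal is correct and complete in outline; the covering-space bookkeeping, the observation that $\lk(\mu_K,\doub(K))=\lk(\lambda_K,\doub(K))=0$ forces the exterior to lift trivially, and the identification $\Sigma_2(S^3\setminus n(A),C)=S^3\setminus n(A_1\cup A_2)$ are all sound. Your framing computation via the self-linking of a $(1,2)$-curve is also valid and gives the right $-2$.

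The paper, however, takes the opposite tack. Rather than building the cover from pieces, it first redraws the Whitehead double by applying the isotopy of the Whitehead link that interchanges $A$ and $C$ (tracking that $\lambda_A$ picks up two meridional twists from the writhe of the new projection---this is where the $-2$ enters in their argument). It then simply \emph{exhibits} the claimed manifold $X_1\cup_{T_1}Y\cup_{T_2}X_2$ in a symmetric picture, observes the obvious $\pi$-rotation $\Z/2$-symmetry, and checks that the quotient reproduces the redrawn $(S^3,\doub(K))$ with $C$ as the branch set. So the paper's proof is verificational (start from the answer, quotient by the involution, recognize the base), while yours is constructive (start from the base, lift piece by piece). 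Your approach is more systematic and requires no inspired picture; theirs is shorter once the figure is in hand and sidesteps the explicit cut-and-reglue identification of $A_1\cup A_2$ with $T(2,4)$, which you correctly flag as the one nontrivial step in your route.
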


\begin{proof}

First, note that the Whitehead link admits an isotopy that interchanges its components.  Figure \ref{isotope} applies this isotopy to the description of the Whitehead double which preceded the proposition.   Note that  $\lambda_A$ appears to acquire two additional meridional twists during the isotopy; these are added to cancel out the twists coming from the writhe of the new projection of $A$.

It is now clear that our description of the 2-fold branched cover of $ D(K)$ holds.  Indeed, the manifold presented by Figure \ref{fig2} has an obvious $\Z/2$-symmetry; namely, the rotation by $\pi$ about the axis which is perpendicular to the plane of the page and which passes through the central marked point.  Forming the quotient of this action produces the manifold shown in the second part of Figure \ref{isotope}, with $C$ representing the image of the axis (together with the point at infinity) under the quotient projection.  Since the axis coincides with the fixed set of the rotation,  Figure \ref{fig2} presents the 2-fold branched cover of Figure \ref{isotope} branched over $C$ which, by our previous description, is equivalent to  $D(K)\subset S^3$. 
\end{proof}

\begin{figure}[h]
 \psfrag{A}{ $ \mu_{A_2}=\lambda_{K_2} $}
\psfrag{B}{\hskip.05in$ \mu_{K_2}$} 
\psfrag{C}{$A_2$}
\psfrag{D}{$A_1$}
\psfrag{E}{$\mu_{A_1}=\lambda_{K_1}$ }
\psfrag{F}{$\mu_{K_1}$}
\psfrag{p}{$\pi$}
\begin{center}
  \includegraphics[height=180pt]{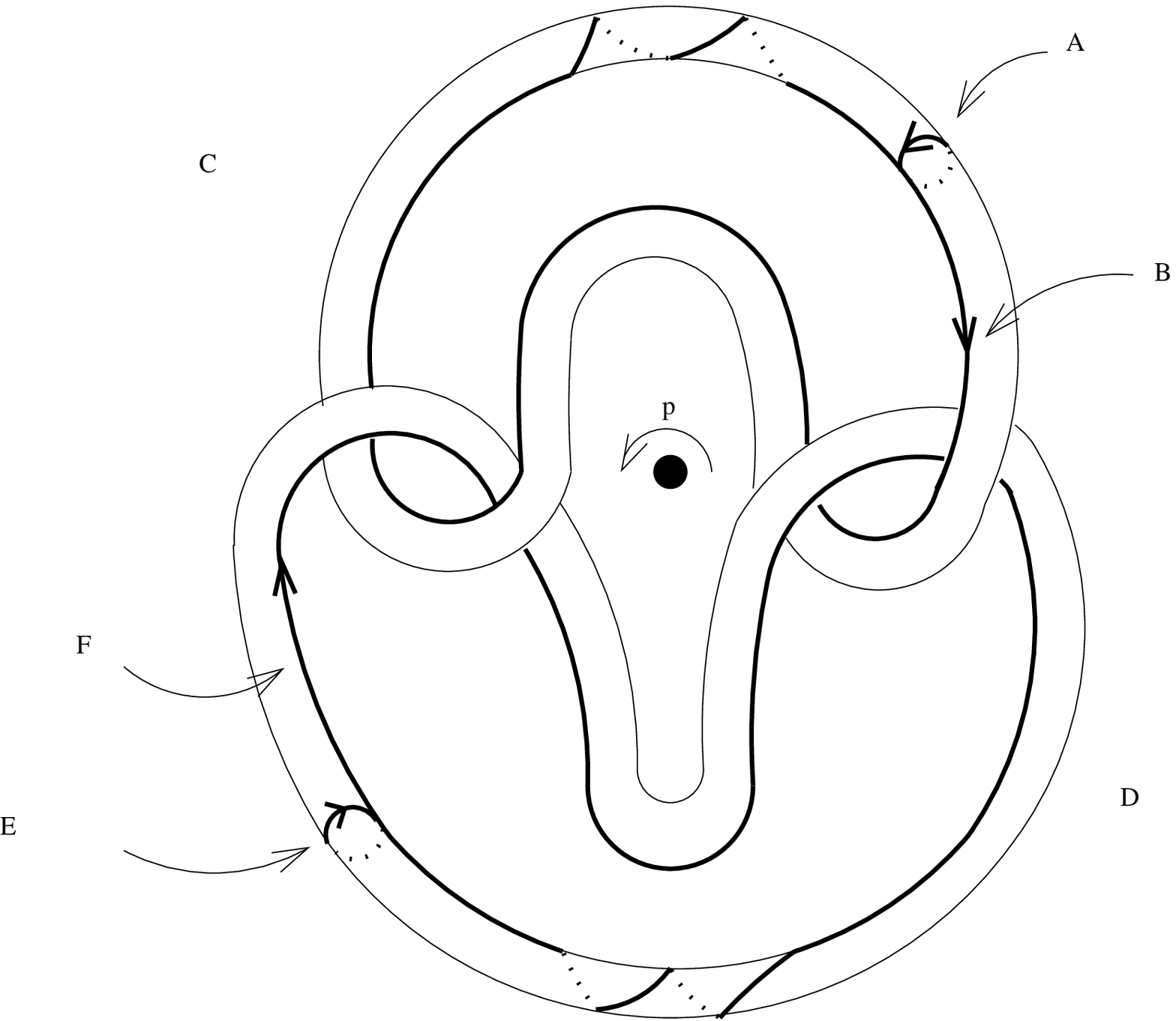}
\caption{\label{fig2} }
\end{center}
\end{figure}

\begin{figure}[h]
 \psfrag{m}{ $ \mu_{A}=\lambda_{K} $}
\psfrag{l}{$\lambda_A=\mu_{K}$} 
\psfrag{A}{$A$}
\psfrag{C}{$C$}
\psfrag{e}{$\sim$ }
\begin{center}
  \includegraphics[height=160pt]{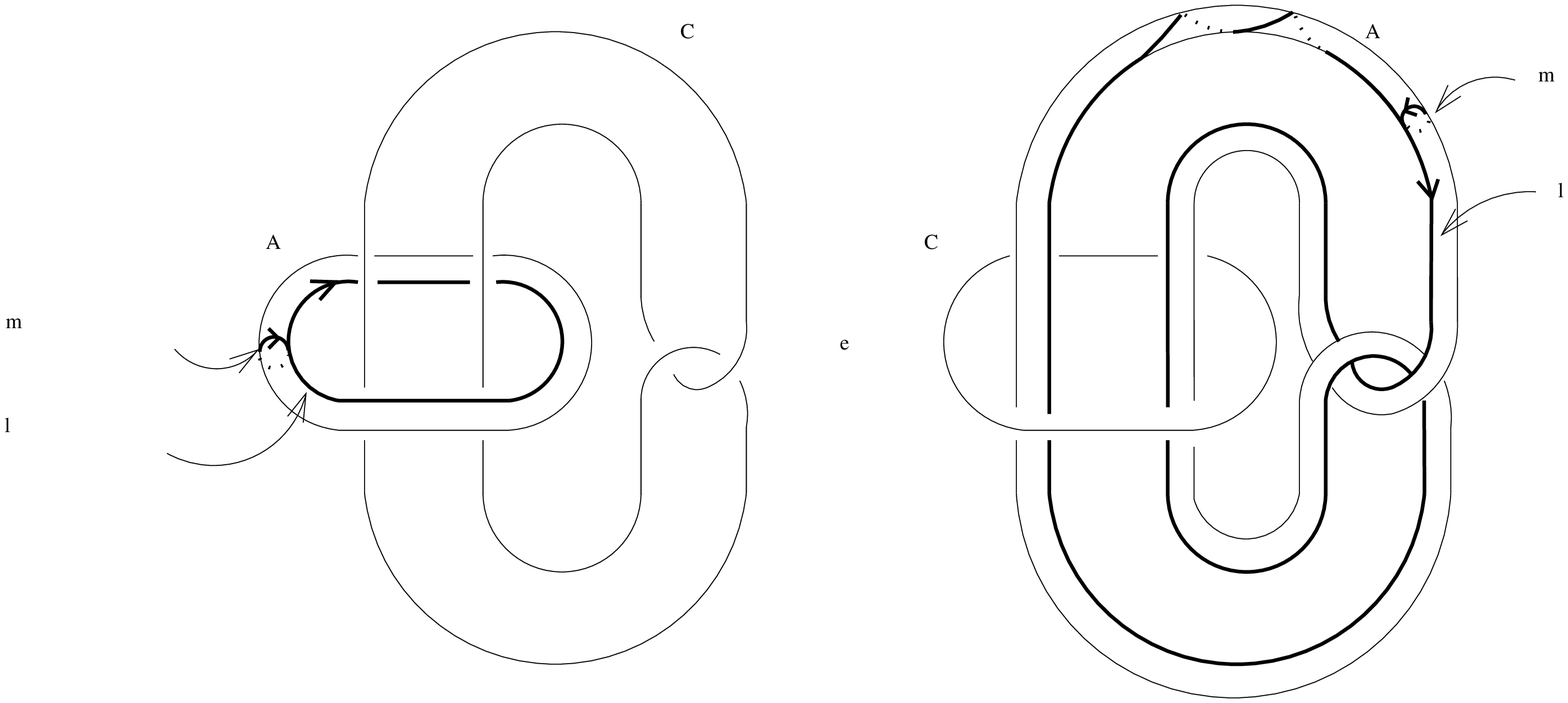}
\caption{\label{isotope} }
\end{center}
\end{figure}

 When analyzing moduli spaces of flat $SO(3)$ connections, we will need a presentation of the fundamental group of $Y=S^3\setminus n(A_1\cup A_2)$. Recall that loops in a knot or link diagram are based by connecting them to a base point lying above the plane of the projection by a straight line segment. In this way the loops $\mu_{A_i},\lambda_{A_i}$ in Figure \ref{fig2} represent elements of $\pi_1(Y)$. Eliminating generators from  the Wirtinger  presentation yields the following.

\begin{prop} \label{pi1} The meridians $\mu_{A_1}$ and $\mu_{A_2}$  generate the fundamental group of  $Y$. Indeed, we have a presentation:
$$\pi_1(Y)=\langle\ \mu_{A_1}, \mu_{A_2} \ | \ [\mu_{A_1}^{-1},\mu_{A_2}][\mu_{A_1},\mu_{A_2}^{-1}] \ \rangle.
$$
The longitudes can be described in this presentation by
$$\lambda_{A_1}=\mu_{A_1}\mu_{A_2}\mu_{A_1}^{-1}\mu_{A_2}, \ \ \ \lambda_{A_2}=\mu_{A_2}\mu_{A_1}\mu_{A_2}^{-1}\mu_{A_1}. $$
  Moreover, the loop $(\mu_{A_1}\mu_{A_2}^{-1})^2$  lies in the center of $\pi_1(Y)$.\qed
\end{prop}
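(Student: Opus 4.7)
The plan is to derive everything from the Wirtinger presentation of the diagram of $A_1 \cup A_2$ in Figure \ref{fig2}. The $(2,4)$-torus link has four crossings and four arcs, giving four Wirtinger generators and four crossing relations, one of which is redundant. I would label the arcs so that one per component represents $\mu_{A_1}$ or $\mu_{A_2}$, use two of the crossing relations to express the remaining two arcs as conjugate words in $\mu_{A_1}$ and $\mu_{A_2}$, and substitute into the third relation. After cyclic reduction this yields a single relation on two generators, which I expect to match the displayed product of commutators $[\mu_{A_1}^{-1},\mu_{A_2}][\mu_{A_1},\mu_{A_2}^{-1}]$.

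For the longitudes I would trace each component of $A_1 \cup A_2$ once in the direction of its orientation, reading off the appropriate conjugate meridian at each overcrossing. Since the diagram realizes the link as the closure of the $2$--braid $\sigma_1^4$, every crossing is between the two distinct strands, so each component has writhe zero and the blackboard framing agrees with the Seifert framing; no writhe correction is therefore needed. As a sanity check, the stated longitudes have exponent sum $0$ in the meridian of their own component and exponent sum $2$ in the other meridian, matching the linking number $\mathrm{lk}(A_1,A_2) = 2$ between the two components.

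For the centrality of $z = (\mu_{A_1}\mu_{A_2}^{-1})^2$, I would argue purely algebraically from the relation. Writing $x = \mu_{A_1}$ and $y = \mu_{A_2}$, the relation $x^{-1}yxy^{-1}\cdot xy^{-1}x^{-1}y = 1$ rearranges to $yxy^{-1}x = xy^{-1}xy$, and left--multiplying by $y^{-1}$ gives $(xy^{-1})^2 = (y^{-1}x)^2$. Since $(y^{-1}x)^2 = y^{-1}(xy^{-1})^2y$, this identity is precisely $y^{-1}zy = z$; hence $z$ commutes with $y$. It commutes trivially with $xy^{-1}$, so it commutes with $(xy^{-1})y = x$ as well, and therefore lies in the center. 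The main obstacle in the whole argument is not conceptual but bookkeeping --- carrying out the Wirtinger simplification correctly and recording crossing signs and orientations consistently --- but once the two--generator one--relation presentation is obtained, both the longitude formulas and the centrality of $z$ follow easily.
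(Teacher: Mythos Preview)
Your proposal is correct and follows exactly the approach the paper indicates: the paper's entire ``proof'' is the sentence preceding the proposition --- ``Eliminating generators from the Wirtinger presentation yields the following'' --- together with a \qed, so you have in fact supplied more detail than the authors do. One small slip: to pass from $yxy^{-1}x = xy^{-1}xy$ to $(xy^{-1})^2 = (y^{-1}x)^2$ you need to multiply on \emph{both} sides by $y^{-1}$ (left and right), not just the left; with that correction your centrality argument is clean.
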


\subsection{Some useful cobordisms}
 
In this subsection, we complete our topological understanding of the  2-fold branched covers by constructing negative definite cobordisms from  $\Sigma(D(T_{p,q}))$ to certain Seifert manifolds, $\Sigma(p,q,2pq-1)$.   The motivation underlying this construction is provided by the fact that these Seifert manifolds bound 4-manifolds with Property \inst.  According to Proposition \ref{prop:defcob}, gluing these latter 4-manifolds to the cobordisms provided here will verify that $\Sigma(D(T_{p,q}))$ bounds a 4-manifold with Property \inst.

\medskip

We begin with a useful lemma which is undoubtedly familiar to those working with the Kirby calculus (similar observations can be found, for instance, in \cite{Cochran-Gompf}).  In essence, the lemma says that performing crossing changes to a knot leads to a cobordism (rel boundary) between knot complements. Keeping track of the sign of the crossings determines the intersection form of these cobordisms.

\begin{figure}
 \psfrag{K-}{negative}
  \psfrag{K+}{positive}
 \begin{center}
 \includegraphics [height=100pt]{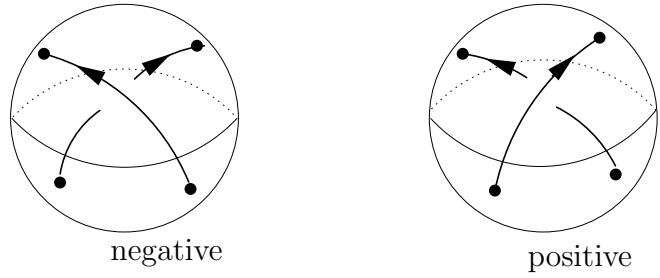}
\caption{\label{fig:crossingchange} Changing a crossing.  Outside a ball, two knots agree.  Making a modification in this ball which passes from left to right is called a negative-to-positive crossing change. }
\end{center}
\end{figure}

\begin{lemma}\label{lem3.2}  Let  $K_1,K_2$ be oriented knots in an oriented  homology 3-sphere $\Sigma$. Suppose that $K_1$ can be transformed to $K_2$ by  $p$ positive-to-negative  and $n$ negative-to-positive  crossing changes (see Figure \ref{fig:crossingchange}). 

 Then adding 2-handles to $[0,1]\times (\Sigma\setminus n(K_1))$ produces  a cobordism rel boundary from $\Sigma\setminus n(K_1)$ to $\Sigma\setminus n(K_2)$; that is,  an oriented 4-manifold $V$ with boundary decomposing as
$$\partial V=-\big(\Sigma\setminus n(K_1)\big)  \underset{\{0\} \times T}\cup \big( [0,1]\times T \big) \underset{ \{1\}\times T}\cup\big(\Sigma\setminus n(K_2)\big),$$where $T$  is the oriented boundary of the neighborhood of $K_1$.  This cobordism satisfies
\begin{enumerate}
\item There is an isomorphism $H_1(V)\cong\Z$ which extends  the 
linking number $H_1(\Sigma\setminus n(K_i))\to \Z$  for $i=1,2$.
\item Let $H$ be a 3-manifold satisfying $H_*(H;\Q)\cong H_*(S^1\times D^2;\Q)$ (and hence  $\partial H\cong T$). 
Then any 4-manifold  obtained by gluing $[0,1]\times H $ to $V$ along $[0,1]\times T \subset \partial V$ so that the two boundary components are rational homology 3-spheres has intersection form $\langle-1\rangle^{\oplus p} \oplus \langle1\rangle^{\oplus n}$.  \end{enumerate}

 \end{lemma}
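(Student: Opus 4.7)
The plan is to realize each crossing change as the attachment of a single $\pm 1$-framed 2-handle. Recall that at any crossing of an oriented knot $K$ in $\Sigma$, there is a small ``crossing disk'' $D$ whose interior meets $K$ transversely in two points of opposite sign; its boundary $c = \partial D$ is a small unknot in $\Sigma\setminus n(K)$, is null-homologous there (since $\lk(c,K)=0$), and $\pm 1$ surgery on $c$ with respect to the disk framing returns $\Sigma$ but with $K$ replaced by the knot obtained by the corresponding crossing change. Fixing conventions so that $-1$ surgery corresponds to a positive-to-negative change and $+1$ surgery to a negative-to-positive change, I would first build $V$ by iterating: attach a 2-handle to $[0,1]\times(\Sigma\setminus n(K_1))$ along a small unknot $c_i$ in the interior of $\{1\}\times(\Sigma\setminus n(K_1))$ with framing $\mp 1$, once for each of the $p+n$ crossing changes. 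Since each $c_i$ lies away from $T$, the result is an oriented cobordism rel boundary from $\Sigma\setminus n(K_1)$ to $\Sigma\setminus n(K_2)$ containing $p$ two-handles of framing $-1$ and $n$ of framing $+1$.

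For part (1), each attaching circle $c_i$ is null-homologous in $\Sigma\setminus n(K_1)$, so a straightforward Mayer--Vietoris (or handle-chain-complex) computation shows that the inclusion $\Sigma\setminus n(K_1)\hookrightarrow V$ induces an isomorphism $\Z \cong H_1(\Sigma\setminus n(K_1)) \to H_1(V)$. By the symmetric description of $V$ as a cobordism from $K_2$ back to $K_1$, the inclusion from the other end also induces an isomorphism on $H_1$, so the identification $H_1(V)\cong \Z$ extends the linking-number homomorphisms on both boundary knot complements.

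For part (2), let $W$ denote the 4-manifold obtained by gluing $[0,1]\times H$ to $V$ along $[0,1]\times T$. Since $H$ is attached away from the 2-handles, $W$ can equally be described as the result of attaching the same $p+n$ 2-handles to $[0,1]\times M_1$, where $M_1 = (\Sigma\setminus n(K_1))\cup_T H$ is a rational homology 3-sphere by hypothesis. Each $c_i$ is null-homologous inside $\Sigma\setminus n(K_1)\subset M_1$, so capping the cores of the 2-handles by bounding 2-chains in $M_1$ produces rational 2-cycles $F_1,\ldots,F_{p+n}$ which, as $H_2([0,1]\times M_1;\Q)=0$, form a basis of $H_2(W;\Q)$. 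In this basis the intersection form is the linking matrix in $M_1$: off-diagonal entries are $\lk_{M_1}(c_i,c_j)$ and diagonal entries are the 2-handle framings corrected by the self-linkings of $c_i$ relative to their bounding surfaces. The crossing disks at distinct crossings can be chosen pairwise disjoint and small, so the $c_i$ are pairwise unlinked in $\Sigma$; since each $c_i$ bounds an integer surface already in $\Sigma\setminus n(K_1)\subset M_1$ (the crossing disk tubed through its two intersections with $K_1$), the off-diagonal linkings in $M_1$ agree with those in $\Sigma$ and vanish. For the diagonal, this tubed surface agrees with the disk framing of $c_i$ in a collar of $c_i$, so the self-linking is $0$, and the diagonal entries equal the framings $\pm 1$. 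This yields the intersection form $\langle -1\rangle^{\oplus p}\oplus\langle +1\rangle^{\oplus n}$.

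The main obstacle is ensuring that the self-linkings of $c_i$, which are manifestly $0$ in the integer homology sphere $\Sigma$, remain $0$ after passing to the possibly torsion-rich rational homology sphere $M_1$. This is where the rational homology solid-torus hypothesis on $H$ enters: because $\lk(c_i,K_1)=0$ the crossing disk survives as an integer 2-chain in $\Sigma\setminus n(K_1)\subset M_1$ with boundary $c_i$, so the framing calculations reduce to familiar integer linking computations in $\Sigma$ and are not sensitive to the homology of $H$ beyond the rank-one condition. The remaining work is standard Kirby-calculus bookkeeping of signs and framings.
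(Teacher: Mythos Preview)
Your construction of $V$ via $2$-handle attachment along null-homologous crossing circles is exactly the paper's, and your argument for part~(1) is equivalent to theirs. The difference lies in how the intersection form in part~(2) is computed. The paper first attaches the $2$-handles to $[0,1]\times\Sigma$ (rather than the knot complement) to obtain a closed cobordism $W$, observes that $W\cong\big([0,1]\times\Sigma\big)\mathbin{\#} p(-\C P^2)\mathbin{\#} n(\C P^2)$, and hence reads off the intersection form immediately; then $V=W\setminus n(F)$ for the trace annulus $F$, and a short homology argument shows $H_2(V)\cong H_2(W)$ with the same form, which persists after gluing $[0,1]\times H$ since no new $H_2$ is created when the boundary components are rational homology spheres. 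Your route instead glues $[0,1]\times H$ first and computes the intersection form as the linking matrix of the $c_i$ in the rational homology sphere $M_1$, using that each $c_i$ bounds an integral surface (the tubed crossing disk) already inside $\Sigma\setminus n(K_1)\subset M_1$. Both arguments are correct; the paper's $\C P^2$ identification is slicker and avoids any linking-number bookkeeping, while your approach is more hands-on and makes explicit why the hypothesis on $H$ is exactly what is needed to control the form.
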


\begin{proof}  Attach   2-handles  to $[0,1]\times \Sigma $ along  unknotted $-1$ framed circles  in $ \{1\}\times \Sigma $ which encircle each crossing one wishes to change from positive to negative, and attach 2-handles  to $[0,1]\times \Sigma $ along  unknotted $+1$ framed circles   which encircle each crossing one wishes to change from  negative to positive.  Choose the attaching circles to have linking number zero 
with $K_1$.  Since the handles are attached along unknots, the resulting 4-manifold $W$ satisfies $\partial W=-\Sigma\sqcup \Sigma$.  As $K_1$ lies in the complement of the attaching regions for the 2-handles, its trace $ [0,1]\times  K_1 $ is  a properly embedded annulus in $ W$ which we denote by $F$.  Let $V=W\setminus n(F)$.

Now $W$ is diffeomorphic to the connected sum of $[0,1]\times  \Sigma $ with $p$ copies of $- {\C P}^2$ and $n$ copies of $\C P^2$. Hence $H_1(W)=0$ and $W$ has   diagonal intersection form
$\langle-1\rangle^{\oplus p} \oplus \langle1\rangle^{\oplus n}$.   Sliding $K_1$ over the 2-handles shows that  
$$\partial (W,F)= -(\Sigma,K_1) \sqcup (\Sigma,K_2).$$
Thus $V$ is a cobordism rel boundary from $\Sigma\setminus n(K_1)$ to $\Sigma\setminus n(K_2)$.

 The condition that the attaching circles for the 2-handles have linking number zero with $K_1$ ensures that the annulus  $F$ is null-homologous; that is, $[F,\partial F]=0 \in H_2(W,\partial W)\cong H_2(W)$.  Thus the linking number $H_1(V)=H_1(W\setminus n(F))\to \Z$ extends the linking number $H_1(\Sigma\setminus n(K_i))\to \Z$ for $i=1,2$. Since $H_1(W)=0$,   the linking number  yields an isomorphism $H_1(V)\cong \Z$.

The homomorphism $H_2(W,V)\to H_1(V)$ is an isomorphism and $H_3(W,V)\to H_2(V)$ is zero. This implies $H_2(V)\cong H_2(W)$. Since the generators of $H_2(W)$ can be represented by cycles which miss $F$ (which, again, follows from the fact the attaching circles for the 2-handles have zero linking with $K_1$)   it follows that  $V$, like $W$, has intersection form  $\langle-1\rangle^{\oplus p} \oplus \langle1\rangle^{\oplus n}$.   Gluing $[0,1]\times  H $ to $V$ along $F\times S^1 =[0,1]\times  T $ yields a 4-manifold with intersection form $\langle-1\rangle^{\oplus p} \oplus \langle1\rangle^{\oplus n}$, provided that this gluing does not create any additional second homology.  However, the assumption that the boundary components of the glued manifold are rational homology spheres implies  $H_2(V)= H_2(([0,1]\times H)\!\!\! \underset{[0,1]\times T}{\cup}\!\!\!  V)$. 
\end{proof}

Recall that  our decomposition of the 2-fold branched cover of $D(K)$ contains two copies of $S^3\setminus n(K)$.  The previous lemma allows us to construct cobordisms from these knot complements to a solid torus (the complement of the unknot) which,  in turn, yield cobordisms from $D(K)$ to simpler manifolds.  If $K$ can be unknotted by  changing positive crossings, these cobordisms will be negative definite. The following lemma makes this precise.  In the statement, $S^3_{\frac{p}{q}}(K)$ denotes the result of  Dehn surgery on $K$ with slope $\frac{p}{q}$.

\begin{lemma}\label{lem3.5}Suppose  $K$ is a knot which can be unknotted by positive-to-negative crossing changes. Then there is
   a negative definite 4-manifold   
 $N(K)$ with $H_1(N(K))=0$, and
 $$\partial N(K)=-\branch (\doub(K))\sqcup  \surg{1}{2}{K}.$$ 
\end{lemma}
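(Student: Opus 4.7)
The plan is to construct $N(K)$ by applying the crossing change cobordism of Lemma \ref{lem3.2} to just one of the two knot complement pieces in the decomposition $\Sigma(D(K)) = X_1 \cup_{T_1} Y \cup_{T_2} X_2$ furnished by Proposition \ref{2foldcover}. Unknotting $K$ by $p$ positive-to-negative crossing changes produces, via Lemma \ref{lem3.2}, a cobordism $V$ rel boundary from $X_1 = S^3 \setminus n(K)$ to the solid torus $V_0 := S^3 \setminus n(U)$. I would then set
$$N(K) \;=\; V \;\cup_{[0,1]\times T_1}\; \bigl([0,1]\times (Y \cup_{T_2} X_2)\bigr).$$
Reassembling the decomposition of the branched cover recovers $-\Sigma(D(K))$ as the $t=0$ boundary, and the $t=1$ boundary is the closed 3-manifold $M := V_0 \cup_{T_1} Y \cup_{T_2} X_2$ obtained by replacing $X_1$ with the solid torus $V_0$. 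I must then show $M \cong S^3_{1/2}(K)$.

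The heart of the argument is tracing slopes through the identifications in Proposition \ref{2foldcover}(5). The key subtlety is that the meridian of $V_0$ (the curve bounding a disk in $V_0$) is the Seifert longitude $\lambda_U$ of the unknot, not the meridian $\mu_U$, since $V_0$ is the complement of the unknot rather than its tubular neighborhood. Because the identification on $T_1$ sends $\lambda_{K_1}$ to $\mu_{A_1}$, gluing $V_0$ to $Y$ along $T_1$ is the trivial Dehn filling on $A_1$, so $V_0 \cup_{T_1} Y = S^3 \setminus n(A_2)$. Since $A_2$ is unknotted, this is itself a solid torus whose meridian on $T_2$ is $\lambda_{A_2}$. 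Inverting the identification on $T_2$ gives $\lambda_{A_2} = \mu_{K_2} + 2\lambda_{K_2}$, so attaching $X_2$ performs Dehn filling along the slope $1/2$ curve on $K$; hence $M \cong S^3_{1/2}(K)$.

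Finally, I would verify the negative definiteness and homology conditions by invoking Lemma \ref{lem3.2}(2) with $H := Y \cup_{T_2} X_2$. Both boundary components of $N(K)$ are integer homology spheres ($\Sigma(D(K))$ because $D(K)$ has trivial Alexander polynomial, and $S^3_{1/2}(K)$ because the surgery coefficient has numerator $\pm 1$), and a Mayer--Vietoris computation shows $H_*(H;\mathbb{Q}) \cong H_*(S^1 \times D^2;\mathbb{Q})$, so Lemma \ref{lem3.2}(2) applies and yields intersection form $\langle -1\rangle^{\oplus p}$, which is negative definite. Another Mayer--Vietoris calculation shows $H_1(N(K)) = 0$: the identification on $T_1$ forces $\mu_{A_1} = \lambda_{K_1} = 0$ in $H_1(V)$, while $\mu_{K_1}$, which is the generator of $H_1(V) \cong \mathbb{Z}$, corresponds on the $H$ side to $-2\mu_{A_1} + \lambda_{A_1} = -2\mu_{A_1}$, also forced to zero. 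The main obstacle throughout is the slope bookkeeping in paragraph two, where one must carefully unravel which curve plays the role of meridian for each of the several solid tori in play.
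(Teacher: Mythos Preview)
Your proposal is correct and follows essentially the same approach as the paper: apply the crossing-change cobordism of Lemma~\ref{lem3.2} to one of the two knot-complement pieces in the decomposition of Proposition~\ref{2foldcover}, then trace slopes to identify the new boundary as $S^3_{1/2}(K)$ (the paper unknots the $X_2$ piece rather than $X_1$, but this is immaterial by the symmetry of the branched cover). The only substantive difference is that the paper dispatches $H_1(N(K))=0$ more quickly by observing that $N(K)$ is built from $[0,1]\times\Sigma(D(K))$ by attaching $2$-handles, whereas you compute it via Mayer--Vietoris.
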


\begin{proof}

The hypothesis allows us to use Lemma \ref{lem3.2} to produce a cobordism rel boundary $V(K)$ between $S^3\setminus n(K)$ and $S^3\setminus n(U)$.  Thus $$\partial V(K)=-\big(S^3\setminus n(K)\big)  \underset{\{0\} \times T}\cup \big( [0,1]\times T \big) \underset{ \{1\}\times T}\cup\big(S^3\setminus n(U)\big).$$
Since only positive crossings were changed, $V(K)$ is negative definite.   

Recall the decomposition of Proposition \ref{2foldcover}  (c.f.~ Figure \ref{fig2})
 $$\branch (\doub(K))=X_1\underset{T_1} \cup Y\underset{T_2}\cup  X_2,$$
 with $X_i= S^3\setminus n(K)$ and gluing identifications on the boundary tori specified by $\mu_{K_i}=\mu_{A_i}^{-2}\lambda_{A_i}$ and $\lambda_{K_i}=\mu_{A_i}$.  In light of this decomposition, let  $$H=X_1\underset{T_1}\cup Y.$$
Gluing $[0,1]\times H$ to $V(K)$ produces a negative definite 4-manifold $N(K)$ with boundary components
 $$\partial_-(N(K))= -\big( H  \underset{T_2} \cup S^3\setminus n(K)\big)=-\big( H  \underset{T_2} \cup X_2\big)=-\branch (\doub(K))$$ and
 $$\partial_+(N(K))=H \underset{T_2}\cup (S^3\setminus n(U)).$$ 

Since $S^3\setminus n(U)$ is a solid torus, 
 $ \partial_+(N(K)) $
is obtained from $H$ by Dehn filling  $\partial H=T_2$ or, equivalently, from  $X_1\cup_{T_1}(S^3\setminus n(A_1))$ by Dehn surgery on $A_2$.  This is the {\em trivial} slope (i.e.  $\frac{1}{0}$) Dehn surgery, since the longitude $\lambda_U$ bounds a disk in $S^3\setminus n(U)$  and $\lambda_U$ is identified with $\mu_{A_2}$. 

Therefore, 
$$ \partial_+(N(K))=X_1\underset{T_1} \cup (S^3\setminus n(A_1)).$$
Note, however, that $A_1$ is also unknotted so $S^3\setminus n(A_1)$ is a solid torus. Since $X_1=S^3\setminus n(K)$, it follows that $\partial_+(N(K))$ is a Dehn surgery on $K$.  To understand which surgery, observe that the longitude $\lambda_{A_1}$ bounds a disk in $S^3\setminus n(A_1)$, so the Dehn surgery is the one for which $\lambda_{A_1}=\mu_{A_1}^2\mu_{K_1}= \lambda_{K_1}^2\mu_{K_1}$ is killed, i.e. 
$$\partial_+(N(K))=\surg{1}{2}{K} .$$ 
Thus  $\partial N(K)=-\branch (\doub(K))\sqcup \surg{1}{2}{K} $, as claimed.  

Examining  the construction in the proof of Lemma \ref{lem3.2} shows that $N(K)$ is obtained from $I\times \branch (\doub(K))$ by attaching 2-handles, and hence  $H_1(N(K))=0$. 
 \end{proof}

\medskip

Let $p,q$ be a pair of relatively prime positive integers, and denote by $T_{p,\pm q}$   the $(p,\pm q)$ torus knot.   For $k>0$,    the Seifert fibered homology sphere
 $-\Sigma(p,q, pqk \mp 1)$ is diffeomorphic to $\surg{1}{k}{T_{p,\pm q}}$  as oriented manifolds \cite[Proposition 3.1]{Moser1971}.  In particular, $\surg{1}{k}{T_{p,q}} =-\Sigma(p,q,kpq-1) $ for $k>0$.
Since $T_{p,q}$ has a projection with all positive crossings, Lemma \ref{lem3.5} yields the cobordisms promised at the beginning of this subsection as an immediate corollary.

\begin{corollary} \label{prop5.1}  Given $p,q>0$ relatively prime, there exists a pair  $(W_{p,q},e)$ with Property \inst, with $\partial W_{p,q}$  the disjoint union of the (oppositely oriented) 2-fold branched cover of the Whitehead double of  $T_{p,q}$   and three lens spaces $Y_i$
$$\partial W_{p,q}= -\Sigma(D(T_{p,q}))\sqcup  Y_1\sqcup Y_2\sqcup Y_3.$$
The class $e$ satisfies  $-e^2=\tfrac{1}{pq(2pq-1)},$  and the triples $(W_{p,q}, e, \tfrac{1}{pq(2pq-1)}) $ and  $(W_{p,q}, 0, \tfrac{1}{pq(2pq-1)}) $ admit a $\cs$-partition with $\partial_\gl W_{p,q}= -\Sigma(D(K))$.
\end{corollary}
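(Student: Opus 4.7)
The approach is to build $W_{p,q}$ by gluing two 4-manifolds already in hand along a common Brieskorn boundary, transfer Property \inst\ across the seam using Proposition \ref{prop:defcob}, and finally read off both cs-partitions from the Chern-Simons bounds supplied by Proposition \ref{truncated} and Lemma \ref{observation}.

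First, I would apply Proposition \ref{truncated} with the given coprime $p, q$ and $k = 2$ to produce a pair $(X, e)$ having Property \inst\ with $-e^2 = \tfrac{1}{pq(2pq-1)}$ and boundary
$$\partial X = \Sigma(p,q,2pq-1) \sqcup Y_1 \sqcup Y_2 \sqcup Y_3,$$
where the three lens spaces satisfy $\tau(Y_1, e|_{Y_1}) \geq \tfrac{1}{p}$, $\tau(Y_2, e|_{Y_2}) \geq \tfrac{1}{q}$, and $\tau(Y_3, e|_{Y_3}) \geq \tfrac{1}{2pq-1}$. Second, I would apply Lemma \ref{lem3.5} to $K = T_{p,q}$; this is legitimate because the positive torus knot admits a projection in which every crossing is positive and can be unknotted by positive-to-negative crossing changes. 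The lemma produces a negative definite 4-manifold $N(T_{p,q})$ with $H_1(N(T_{p,q})) = 0$ and boundary $-\Sigma(D(T_{p,q})) \sqcup \surg{1}{2}{T_{p,q}}$, and the Moser identification cited in the text gives $\surg{1}{2}{T_{p,q}} \cong -\Sigma(p,q,2pq-1)$.

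With the two pieces in hand, I would form
$$W_{p,q} = X \cup_{\Sigma(p,q,2pq-1)} N(T_{p,q})$$
by identifying the Brieskorn boundary of $X$ with its orientation reverse inside $\partial N(T_{p,q})$, and define $\tilde e \in H^2(W_{p,q})$ as $e \oplus 0$ under the splitting $H^2(W_{p,q}) \cong H^2(X) \oplus H^2(N(T_{p,q}))$ that is available because the gluing sphere is an integer homology sphere. Proposition \ref{prop:defcob}, with the roles of $W$ and $Q$ there played by $X$ and $N(T_{p,q})$ respectively, then delivers Property \inst\ for $(W_{p,q}, \tilde e)$, with $-\tilde e^2 = -e^2 = \tfrac{1}{pq(2pq-1)}$ and boundary $-\Sigma(D(T_{p,q})) \sqcup Y_1 \sqcup Y_2 \sqcup Y_3$, as required.

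For the cs-partitions, I would declare $\partial_\gl W_{p,q} = -\Sigma(D(T_{p,q}))$ (an integer homology sphere) and $\partial_\cs W_{p,q} = Y_1 \sqcup Y_2 \sqcup Y_3$. For the triple $(W_{p,q}, \tilde e, \tfrac{1}{pq(2pq-1)})$ the required strict inequalities $\tau(Y_j, \tilde e|_{Y_j}) > \tfrac{1}{pq(2pq-1)}$ reduce to $q(2pq-1) > 1$, $p(2pq-1) > 1$, and $pq > 1$, all of which hold for any non-trivial torus knot. For the triple $(W_{p,q}, 0, \tfrac{1}{pq(2pq-1)})$ the restriction of $0$ to each $Y_j$ is trivial, and I would invoke Lemma \ref{observation}: each lens space $Y_j$ has finite cyclic fundamental group of order $p$, $q$, or $2pq-1$, so every Chern-Simons invariant of every flat $SO(3)$ connection on $Y_j$ is rational with denominator dividing that order, yielding the same bounds for $\tau(Y_j, 0)$. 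I do not expect any step to pose a serious obstacle: the argument is essentially an assembly of previously constructed pieces, and the only real care required is in tracking orientations across the Brieskorn gluing so that $\surg{1}{2}{T_{p,q}}$ and $\Sigma(p,q,2pq-1)$ are matched with opposite signs.
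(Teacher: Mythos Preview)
Your proposal is correct and follows essentially the same route as the paper: apply Proposition \ref{truncated} with $k=2$, glue on the negative definite cobordism $N(T_{p,q})$ from Lemma \ref{lem3.5} along the Brieskorn sphere, and extend $e$ by zero. The only cosmetic difference is that the paper packages the transfer of Property \inst\ together with the cs-partition in a single appeal to Corollary \ref{cor:defcob2}, whereas you invoke Proposition \ref{prop:defcob} for Property \inst\ and then verify the cs-partition inequalities by hand; your separate invocation of Lemma \ref{observation} for the $e=0$ case is unnecessary since Proposition \ref{truncated} already gives the lens space bounds for \emph{every} class, but it is not wrong.
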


\begin{proof} According to Proposition \ref{truncated}, there is a pair $(X,e)$ satisfying Property \inst\ such that the boundary of $X$ is the disjoint union of $\Sigma(p,q,2pq-1) $ and three lens spaces 
$Y_1\sqcup Y_2\sqcup Y_3$.  Moreover, the lens spaces satisfy the Chern-Simons bounds $$\tau(Y_i,e_i)>-e^2=\tfrac{1}{pq(2pq-1)}, $$ for any classes $e_i\in H^2(Y_i)$.
 Thus $(X,e,-e^2)$ and $(X,0,-e^2)$ admit $\cs$-partitions with $$\partial_\gl X= \Sigma(p,q,2pq-1) \ \ \ \ \ \  \partial_\cs X=Y_1\sqcup Y_2\sqcup Y_3.$$  
 
 Let $N(T_{p,q})$ be the negative definite manifold of Lemma \ref{lem3.5}, whose boundary is
 $$\partial N(T_{p,q})= -\Sigma(D(T_{p,q}))\sqcup S^3_{\frac{1}{2}}(T_{p,q})=-\Sigma(D(T_{p,q}))\sqcup -\Sigma(p,q,2pq-1).$$   Gluing $N(T_{p,q})$ to $X$ yields a negative definite 4-manifold $W_{p,q}$, with boundary 
 $$\partial W_{p,q} = -\Sigma(D(T_{p,q}))\sqcup  Y_1\sqcup Y_2\sqcup Y_3.$$
 Extend the class $e$ by zero over $N(T_{p,q})$. Then  Corollary \ref{cor:defcob2} implies that $(W_{p,q},e)$ satisfies Property \inst, and that  the triples $(W_{p,q}, e, \tfrac{1}{pq(2pq-1)}) $ and  $(W_{p,q}, 0, \tfrac{1}{pq(2pq-1)}) $ admit a $\cs$-partition with $\partial_\gl W_{p,q}= -\Sigma(D(K))$.
 \end{proof}

 \section{ Chern-Simons invariants and the proof of Theorem $1$} \label{section:proof}
 
 In this section we bring together our newfound topological understanding of the $2$-fold branched covers of  Whitehead doubles with the instanton obstruction developed in Section \ref{gauge}.    Before beginning, we observe that  
 Proposition \ref{infinite2}  and  Corollary \ref{prop5.1} have an immediate consequence; namely, that the Whitehead double  of $T_{p,q}$ has infinite order in the smooth knot concordance group when $p,q>0$.   This result can be seen by a variety of methods (see, for instance, nearly all the references mentioned in the introduction).    Proving Theorem 1, however, is clearly a big leap from this result.  It involves studying the interaction between different Whitehead doubles in $\mathcal{C}$.  To accomplish this, we would like to use our independence criterion (Theorem \ref{thm:indcriterion}) or, more precisely, its corollary (Corollary \ref{cor:indknot}).   Since Corollary \ref{prop5.1} has already provided the requisite $4$-manifolds, it remains to understand  the Chern-Simons invariants of $\Sigma(D(T_{p,q}))$.  The present section develops such an understanding.

\subsection{Chern-Simons invariants of $\branch (\doub(K))$} 

We begin in some generality, analyzing the flat moduli spaces and Chern-Simons invariants of $\branch (\doub(K))$ for an arbitrary knot $K$.  In the next subsection we apply this analysis to the torus knots at hand.

\medskip

For any finite CW complex $X$ and compact Lie group $G$, let $\chi_G(X)$ denote the space of conjugacy classes of representations $\pi_1(X)\to G$.

For a knot $K\subset S^3$, the moduli space of flat $SO(3)$ connections on $\branch (\doub(K))$ is homeomorphic (in fact analytically isomorphic \cite{FKK}) to  $\chi_{SO(3)}(\branch (\doub(K)))$.   
Given any $\Z/2$--homology sphere $\Sigma$, a simple obstruction theory argument shows that  the 2-fold cover $SU(2)\to SO(3)$ induces a bijection between the space $\chi_{SU(2)}(\Sigma)$ of conjugacy classes of $SU(2)$  representations of $\pi_1(\Sigma)$ and the space $\chi_{SO(3)}(\Sigma)$ of conjugacy classes of $SO(3)$  representations.    Indeed, the obstruction to lifting a representation $\gamma:\pi_1(\Sigma)\to SO(3)$ to $SU(2)$ lies in $H^2(\Sigma;\Z/2)$, which vanishes since $\Sigma$ is a $\Z/2$-homology sphere.  A similar argument using the fact that $H^1(\Sigma;\Z/2)=0$ shows that the lift is unique.

Since $\branch (\doub(K))$ is a homology 3-sphere, 
$$\chi_{SU(2)}(\branch (\doub(K)))=\chi_{SO(3)}(\branch (\doub(K))).$$ 
 
The map $BSU(2)\to BSO(3)$ induced by the 2-fold cover $SU(2)\to SO(3)$ takes the first Pontryagin class $p_1\in H^4(BSO(3))$ to $-4c_2\in H^4(BSU(2))$, where $c_2$ denotes the second Chern class.  More generally, the corresponding identification of Lie algebras $a:\mathfrak{su}(2)\to \mathfrak{so}(3)$ satisfies
 $\Tr_3(a(x)^2)=4 \Tr_2(x^2)$, where $\Tr_n$ denotes the trace on $n\times n$ matrices. It follows that 
 the second Chern form of an $SU(2)$  connection, $c_2(A)=\frac{1}{8\pi^2}\Tr_2(F(A)\wedge F(A))$, and the first Pontryagin form of the corresponding $SO(3)$ connection,  $p_1(a(A))=- \frac{1}{8\pi^2}\Tr_3(a(F(A))\wedge a(F(A)))$, satisfy $c_2(A) =-4 p_1(A)$. Thus for a $\Z/2$-homology sphere, the $SO(3)$ Chern-Simons invariant of a representation $\gamma$ (defined in terms of the first Pontryagin form) and the $SU(2)$ Chern-Simons invariant  (defined in terms of the second Chern form) of its unique lift $\tilde{\gamma}$ are related by $\cs(\gamma)=-4\cs(\tilde{\gamma})$.

These observations allow us to work in the technically and notationally simpler context of $SU(2)$ representations.  We will identify $SU(2)$ with the unit quaternions, so that the diagonal maximal torus corresponds to the subgroup  of unit complex numbers $\{e^{i\theta}\}$.

\medskip 
To ensure that the instanton moduli space is compact requires establishing    Chern-Simons bounds, as explained in Section \ref{gauge}. Obtaining an explicit   description of the moduli space of flat $SO(3)$ connections  on a closed 3-manifold sufficient to estimate all Chern-Simons invariants is a difficult problem in general, but it turns out that for the 2-fold branched cover of the Whitehead double of a knot there are essentially only two kinds of interesting flat connections, as we next explain.

We return once again to the decomposition
of Proposition \ref{2foldcover}   
$$\branch (\doub(K))= X_1\underset{T_1} \cup Y\underset{T_2}\cup X_2,$$
 with the gluing identifications on the boundary tori specified by $\mu_{K_i}=\mu_{A_i}^{-2}\lambda_{A_i}$, and $\lambda_{K_i}=\mu_{A_i}$.

 The space  $ \chi_{SU(2)}(\branch (\doub(K)))$ decomposes  into eight subsets as follows. Given  labels $x_1,y,x_2\in \{A,N\}$ let $\chi_{SU(2)}(\branch (\doub(K)))_{ x_1y x_2}$ denote those conjugacy classes of $SU(2)$ representations of $\pi_1(\branch (\doub(K)))$ whose restriction  to the subgroups $\pi_1(X_1), \pi_1(Y), \pi_1(X_2)$ have image (respectively) 
an abelian or non-abelian subgroup of $SU(2)$.   

More precisely, the inclusion  of $X_1 $ into $\branch (\doub(K))$ induces a homomorphism  on fundamental groups.   Composing this with a representation $\tilde\gamma:\pi_1(\branch (\doub(K)))\to SU(2)$ yields a representation $\pi_1(X_1)\to SU(2)$ whose image is either an abelian or non-abelian subgroup of $SU(2)$.   Changing $\tilde \gamma$ within its conjugacy class  (or changing the base point)  does not change the type of this image.  Thus the label $x_1\in \{A,N\}$ can unambiguously be assigned to the conjugacy class of $\tilde \gamma$.  Similar comments apply to $ Y $ and $ X_2 $.

The following theorem shows that only three of these eight subsets of  $ \chi_{SU(2)}(\branch (\doub(K)))$  are interesting.

\begin{theorem} \label{lem6.1}  Let $K\subset S^3$ be any knot.  Then the four subspaces $$\chi_{SU(2)}(\branch (\doub(K)))_\sss{ANN}\ \ \ \ \ \ \ \ \ \ \ \ \chi_{SU(2)}(\branch (\doub(K)))_{\sss{NNA}}$$ $$ \chi_{SU(2)}(\branch (\doub(K)))_\sss{ANA}  \ \ \ \ \ \ \ \ \ \ \ \ \chi_{SU(2)}(\branch (\doub(K)))_\sss{NNN}$$ 
are empty. The subspace $\chi_{SU(2)}(\branch (\doub(K)))_\sss{AAA}$ contains only the trivial representation.

Thus every non-trivial representation lies in one of
$$\chi_{SU(2)}(\branch (\doub(K)))_\sss{AAN} \ \ \ \ \ \chi_{SU(2)}(\branch (\doub(K)))_\sss{NAA} \ \ \ \ \ \chi_{SU(2)}(\branch (\doub(K)))_\sss{NAN}.$$
The branched covering transformation induces an involution on these three subsets that exchanges the first two and leaves the third invariant.
\end{theorem}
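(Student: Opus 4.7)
The central claim to establish is that for any representation $\rho: \pi_1(\Sigma(D(K))) \to SU(2)$, the restriction $\rho|_{\pi_1(Y)}$ is abelian; this immediately rules out the four subspaces with ``N'' in the middle (ANN, NNA, ANA, and NNN). For the AAA claim, note that if all three restrictions are abelian then by Van Kampen $\rho$ is abelian, hence factors through $H_1(\Sigma(D(K));\Z)=0$ and is trivial. The branched covering involution swaps $X_1$ with $X_2$ while preserving $Y$ setwise, so on the level of conjugacy classes it exchanges AAN with NAA and preserves NAN.

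I will prove the main claim by contradiction. Suppose $\rho|_Y$ is non-abelian. The center of any non-abelian subgroup of $SU(2)$ is contained in the center $\{\pm 1\}$ of $SU(2)$, so by the last statement of Proposition \ref{pi1} we have $\rho((\mu_{A_1}\mu_{A_2}^{-1})^2) = \pm 1$. If this value is $+1$, then $\rho(\mu_{A_1}\mu_{A_2}^{-1})^2 = 1$ forces $\rho(\mu_{A_1}\mu_{A_2}^{-1}) = \pm 1$ (these are the only involutions in $SU(2)$), so $\rho(\mu_{A_1}) = \pm\rho(\mu_{A_2})$, which commute and contradict non-abelianness of $\rho|_Y$. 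Thus $\rho(\mu_{A_1}\mu_{A_2}^{-1})^2 = -1$, and after conjugation in $SU(2)$ we may assume $\rho(\mu_{A_1}\mu_{A_2}^{-1}) = i$, equivalently $\rho(\mu_{A_1}) = i\,\rho(\mu_{A_2})$.

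Substituting this normalization into the gluing identifications $\mu_{K_i} = \mu_{A_i}^{-2}\lambda_{A_i}$ from Proposition \ref{2foldcover}, together with the longitude formulas $\lambda_{A_1} = \mu_{A_1}\mu_{A_2}\mu_{A_1}^{-1}\mu_{A_2}$ and $\lambda_{A_2} = \mu_{A_2}\mu_{A_1}\mu_{A_2}^{-1}\mu_{A_1}$ from Proposition \ref{pi1}, a short quaternion computation yields $\rho(\mu_{K_1}) = \rho(\mu_{K_2}) = -1$. Now comes the key topological input: Dehn filling $X_i = S^3\setminus n(K)$ along the meridian $\mu_{K_i}$ produces $S^3$, so $\mu_{K_i}$ normally generates $\pi_1(X_i)$. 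Composing $\rho|_{X_i}$ with the central quotient $SU(2) \to SU(2)/\{\pm 1\} = SO(3)$ sends $\mu_{K_i}$ and all its conjugates to the identity, so this composition is trivial. Therefore $\rho(\pi_1(X_i))\subseteq\{\pm 1\}$, and in particular $\rho|_{X_i}$ is abelian.

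Since $\lambda_{K_i}$ lies in the commutator subgroup of $\pi_1(X_i)$ and $\rho(\pi_1(X_i))$ is abelian, we conclude $\rho(\lambda_{K_i}) = 1$. But the gluing identifications give $\lambda_{K_i} = \mu_{A_i}$, so $\rho(\mu_{A_i}) = 1$ for $i=1,2$. As $\pi_1(Y)$ is generated by $\mu_{A_1}$ and $\mu_{A_2}$, the restriction $\rho|_Y$ is trivial, contradicting the assumption of non-abelianness. I expect the main obstacle to be the quaternion calculation showing $\rho(\mu_{K_1})=\rho(\mu_{K_2})=-1$; once this holds the conclusion follows from the elementary fact that meridians normally generate knot groups, together with the gluing identification $\lambda_{K_i}=\mu_{A_i}$.
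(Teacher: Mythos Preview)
Your argument for the central claim (that $\rho|_{Y}$ is abelian) is correct and follows the same overall strategy as the paper: show $\rho(\mu_{K_1})\in\{\pm 1\}$, deduce $\rho(\pi_1(X_1))\subset\{\pm 1\}$, then use $\lambda_{K_1}=\mu_{A_1}$ to force $\rho(\mu_{A_1})$ central, contradicting non-abelianness.  The paper, however, avoids your case split and coordinate normalization entirely by observing the group-theoretic identity
\[
\mu_{K_1}=\mu_{A_1}^{-2}\lambda_{A_1}=\mu_{A_1}^{-1}\mu_{A_2}\mu_{A_1}^{-1}\mu_{A_2}
=\mu_{A_1}^{-1}(\mu_{A_1}\mu_{A_2}^{-1})^{-2}\mu_{A_1},
\]
so $\rho(\mu_{K_1})$ is conjugate to $\rho((\mu_{A_1}\mu_{A_2}^{-1})^{-2})=\pm 1$, hence equals $\pm 1$.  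This single line replaces your separation into the $+1$ and $-1$ cases and the subsequent quaternion calculation.  Your route is valid (the computation $a^{-1}ba^{-1}b=-1$ when $ab^{-1}=i$ does check out), just longer.

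There is one genuine gap: in the AAA case you assert ``by Van Kampen $\rho$ is abelian.''  Van Kampen only tells you $\pi_1(\Sigma(D(K)))$ is generated by the three pieces; it does \emph{not} follow that three abelian images in $SU(2)$ generate an abelian subgroup (e.g.\ $\langle i\rangle$ and $\langle j\rangle$ do not).  You need the paper's observation: conjugate so that $\rho|_Y$ lands in a fixed maximal torus $\{e^{i\theta}\}$; then $\rho(\mu_{K_i})$ lies in this torus, and since $\rho|_{X_i}$ factors through $H_1(X_i)=\Z\langle\mu_{K_i}\rangle$, the image $\rho(\pi_1(X_i))$ lies in the \emph{same} torus.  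Only then does the whole image lie in a common circle and hence abelianize.
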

\begin{proof}
We use the presentation for $\pi_1(Y)$ derived in  Proposition \ref{pi1}.   

We first show that the restriction of any $SU(2)$ representation of $\pi_1(\branch (\doub(K)))$ to $\pi_1(Y)$ has image an abelian subgroup of $SU(2)$. 
Suppose to the contrary that there exists a representation $\tilde \gamma:\pi_1(\branch (\doub(K)))\to SU(2)$ whose restriction to $\pi_1(Y)$ has non-abelian image.  The centralizer of any non-abelian subgroup of $SU(2)$ consists only of the center $\{\pm 1\}$.  The loop $(\mu_{A_1}\mu_{A_2}^{-1})^2$ lies in the center of $\pi_1(Y)$, and hence is sent by $\tilde \gamma$ to $\pm 1$.  Thus
$$\tilde  \gamma(\mu_{K_1})=\tilde \gamma(\mu_{A_1}^{-2}\lambda_{A_1})=\tilde \gamma(\mu_{A_1}^{-1}\mu_{A_2}\mu_{A_1}^{-1}\mu_{A_2})
=\tilde \gamma(\mu_{A_1}^{-1}(\mu_{A_1}\mu_{A_2}^{-1})^{-2} \mu_{A_1})=\pm 1.$$

Since $X_1$ is the complement of a knot in $S^3$, $\pi_1(X_1)$ is normally generated by its meridian $\mu_{K_1}$.  Therefore $\tilde \gamma$ sends every loop in $\pi_1(X_1)$ to $\{\pm 1 \}$.   In particular, we have  $$\tilde \gamma(\lambda_{K_1})=\tilde \gamma(\mu_{A_1})\in \{\pm 1 \} $$   Since both  $\pm 1$ are central, the fact that $\pi_1(Y)$ is generated by $\mu_{A_1}$ and $\mu_{A_2}$, implies that the restriction of $\tilde \gamma$ to $\pi_1(Y)$ is abelian. This contradicts our assumption, and hence every representation of $\pi_1(\branch(\doub(K)))$ restricts to an abelian representation of $\pi_1(Y)$.

Now suppose that $\tilde \gamma\in \chi_{SU(2)}(\branch (\doub(K)))_\sss{AAA}$. We must show that $\tilde \gamma$ is the trivial representation.  We may assume, by conjugating $\tilde \gamma$ if needed,  that the 
restriction of $\tilde \gamma$ to $\pi_1(Y)$ takes values in the maximal torus $\{e^{i\theta}\}$. In particular, $\tilde \gamma(\mu_{A_1})$ and $\tilde \gamma(\lambda_{A_1})$ lie in this torus.   
 Thus there exists an angle $\theta_0$ so that $\tilde \gamma(\mu_{A_1}^{-2}\lambda_{A_1})=e^{i\theta_0}$.

Since $\mu_{K_1}=\mu_{A_1}^{-2}\lambda_{A_1}$ generates $H_1(X_1)$ and the restriction of $\tilde \gamma$ to $\pi_1(X_1)$ is abelian, the restriction of $\tilde \gamma$ to $\pi_1(X_1)$ factors through $H_1(X_1)=\Z\langle \mu_{K_1}\rangle $. Thus every loop in $\pi_1(X_1)$ is sent to the subgroup of the maximal torus generated by $e^{i\theta_0}$.
It follows that the restriction $\tilde \gamma:\pi_1(X_1)\to SU(2)$ has image in the (same) maximal torus $\{e^{i\theta}\}$.   An identical argument shows that the restriction of $\tilde \gamma$ to $\pi_1(X_2)$ also takes values in the maximal torus $\{e^{i\theta}\}$, and so $\tilde \gamma$ itself is an abelian representation. Since $\branch (\doub(K))$ is a homology sphere, $\tilde \gamma$ must be the trivial representation. 

The last assertion follows from the observation that the 2-fold branched covering transformation exchanges $X_1$ and $X_2$ and leaves $Y$ invariant.
\end{proof}

 Theorem \ref{lem6.1} now implies  the following.
 
\begin{corollary}\label{cor6.2}  $\tau(\branch (\doub(K))$ equals the minimum 
of 4 and the values of the two functions
$$-4\cs:\chi_{SU(2)}(\branch (\doub(K)))_\sss{NAA}\to (0,4]$$
and
$$-4\cs:\chi_{SU(2)}(\branch (\doub(K)))_\sss{NAN}\to (0,4]$$
 \end{corollary}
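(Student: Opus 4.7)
The plan is to read off the corollary directly from the definition of $\tau$, from Theorem \ref{lem6.1}, and from the invariance of Chern-Simons invariants under the branched covering involution. First I would unwind the definition: $\tau(\branch(\doub(K))) = \tau(\branch(\doub(K)), 0)$ is the minimum of $\cs(Y, \gamma) - \cs(Y, \alpha_0)$ as $\gamma$ ranges over flat connections on the (unique, trivial) $SO(3)$ bundle on $\branch(\doub(K))$, interpreted in $(0, 4]$. The trivial flat connection contributes $0 \in \R/4\Z$, which corresponds to $4$ under the identification $\R/4\Z \cong (0, 4]$. This accounts for the ``$4$'' in the statement and handles the edge case in which both listed subspaces are empty.

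Next I would apply the discussion preceding Theorem \ref{lem6.1}. Since $\branch(\doub(K))$ is an integer homology sphere, flat $SO(3)$ connections are in bijection with elements of $\chi_{SU(2)}(\branch(\doub(K)))$ via the unique lift, and the two kinds of Chern-Simons invariants are related by $\cs(\gamma) = -4\cs(\tilde\gamma)$. Thus minimizing over nontrivial flat $SO(3)$ connections is equivalent to minimizing $-4\cs$ over nontrivial $SU(2)$ representations. By Theorem \ref{lem6.1}, every such nontrivial representation lies in exactly one of the three subspaces $\chi_{SU(2)}(\branch(\doub(K)))_{\sss{AAN}}$, $\chi_{SU(2)}(\branch(\doub(K)))_{\sss{NAA}}$, or $\chi_{SU(2)}(\branch(\doub(K)))_{\sss{NAN}}$, so the minimum reduces to one over this finite union.

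To collapse three subspaces to two, I would appeal to the branched covering involution $\phi\colon \branch(\doub(K)) \to \branch(\doub(K))$. As the deck transformation of the oriented double cover $\branch(\doub(K)) \to S^3$, $\phi$ is an orientation-preserving self-diffeomorphism, so precomposition by $\phi_*$ on representations preserves Chern-Simons invariants. The final sentence of Theorem \ref{lem6.1} records that this action exchanges $\chi_{SU(2)}(\branch(\doub(K)))_{\sss{AAN}}$ and $\chi_{SU(2)}(\branch(\doub(K)))_{\sss{NAA}}$, so these two subspaces carry identical sets of $-4\cs$ values. Consequently the minimum over the three subspaces agrees with the minimum over just $\chi_{SU(2)}(\branch(\doub(K)))_{\sss{NAA}}$ and $\chi_{SU(2)}(\branch(\doub(K)))_{\sss{NAN}}$, and combining with the trivial contribution of $4$ yields the claimed formula.

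There is no serious obstacle: all the substantive work lives in Theorem \ref{lem6.1}, and what remains is bookkeeping with definitions. The only points requiring any care are the standard facts that the deck transformation of an oriented double cover is orientation-preserving, and that orientation-preserving diffeomorphisms preserve $\cs$ (immediate from its definition as an integral of curvature forms).
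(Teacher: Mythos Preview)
Your proposal is correct and follows essentially the same argument as the paper: reduce to the three subspaces via Theorem~\ref{lem6.1}, use that the branched covering involution is an orientation-preserving diffeomorphism interchanging the $AAN$ and $NAA$ strata to identify their Chern--Simons values, and note that the trivial connection contributes~$4$. You have simply spelled out in more detail the passage between $SO(3)$ and $SU(2)$ Chern--Simons invariants and the handling of the trivial connection.
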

\begin{proof}
The  non-trivial 2-fold branched covering transformation  is an orientation preserving  diffeomorphism of $\branch (\doub(K))$ which  interchanges the subspaces  $\chi_{SU(2)}(\branch (\doub(K)))_\sss{AAN}$ and $\chi_{SU(2)}(\branch (\doub(K)))_\sss{NAA}$. Hence  the  Chern-Simons values on $\chi_{SU(2)}(\branch (\doub(K)))_\sss{AAN}$ and $\chi_{SU(2)}(\branch (\doub(K)))_\sss{NAA}$ are the same.  

Recall that $\tau(\branch (\doub(K))$ denotes the minimal $SO(3)$ Chern-Simons invariant, taken in $(0,4]$.  Since the trivial connection has Chern-Simons invariant  4, the assertion follows. 
\end{proof}
 
 Let $\theta$ denote the trivial representation.

 \begin{prop}\label{lem6.3}  There is a   bijective   correspondence
 between  the flat moduli spaces $$\chi_{SU(2)}(\branch (\doub(K)))_\sss{NAA}\ \ \longleftrightarrow  \ \ \chi_{SU(2)}(\surg{1}{2}{K} )\setminus \{\theta\}.$$ This correspondence preserves the Chern-Simons invariants modulo $\Z$. 
 \end{prop}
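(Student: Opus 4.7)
The plan is to unpack the structure of a representation $\tilde\gamma\in\chi_{SU(2)}(\branch(\doub(K)))_\sss{NAA}$ using the decomposition of Proposition \ref{2foldcover}, and to show that the non-abelian restriction $\tilde\gamma|_{\pi_1(X_1)}$ must kill the surgery slope $\mu_{K_1}\lambda_{K_1}^2$. This reduces the problem to understanding non-trivial $SU(2)$ representations of $\pi_1(\surg{1}{2}{K})$, the coincidence of non-trivial and non-abelian being automatic since $\surg{1}{2}{K}$ is an integer homology sphere.

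To derive the forward correspondence, I would use Theorem \ref{lem6.1} to conjugate $\tilde\gamma$ so that $\tilde\gamma|_{\pi_1(Y)}$ lands in a maximal torus $\{e^{i\theta}\}$, writing $\tilde\gamma(\mu_{A_j})=e^{i\alpha_j}$ for $j=1,2$. The longitude formulas of Proposition \ref{pi1} force $\tilde\gamma(\lambda_{A_j})=e^{2i\alpha_{3-j}}$; the $T_2$ gluing $\lambda_{K_2}=\mu_{A_2}$ combined with the abelian, hence $H_1$-factoring, restriction $\tilde\gamma|_{X_2}$ then yields $\tilde\gamma(\lambda_{K_2})=1$ and therefore $\alpha_2=0$. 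Setting $\alpha:=\alpha_1$, the $T_1$ gluing gives $\tilde\gamma(\mu_{K_1})=e^{-2i\alpha}$ and $\tilde\gamma(\lambda_{K_1})=e^{i\alpha}$, so $\tilde\gamma(\mu_{K_1}\lambda_{K_1}^2)=1$, and $\tilde\gamma|_{X_1}$ descends to the sought $\bar\rho\in\chi_{SU(2)}(\surg{1}{2}{K})$. Conversely, given a non-trivial (hence non-abelian) $\bar\rho$, the boundary values $\bar\rho(\mu_{K_1})$ and $\bar\rho(\lambda_{K_1})$ determine $\alpha$ up to a sign that is absorbed by $SU(2)$-conjugation, and the formulas above then prescribe a unique abelian extension to $Y$ and $X_2$ compatible with both gluings, giving the inverse bijection.

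For the Chern-Simons claim, I would use the cobordism $N(K)$ of Lemma \ref{lem3.5}, which has $\partial N(K)=-\branch(\doub(K))\sqcup\surg{1}{2}{K}$, and construct a flat $SU(2)$ connection on $N(K)$ whose boundary values are $\tilde\gamma$ and $\bar\rho$. Recall $N(K)=V(K)\cup_{[0,1]\times T}([0,1]\times H)$, where $H=X_1\cup_{T_1} Y$ and $V(K)$ is the cobordism rel boundary of Lemma \ref{lem3.2} built by attaching 2-handles along circles $C_j$ having zero linking number with $K$. Over $[0,1]\times H$ I would pull back the flat connection for $\tilde\gamma|_H$. Over $V(K)$, the abelian representation $\tilde\gamma|_{X_2}$ factors through $H_1(X_2)\cong\Z$, and Lemma \ref{lem3.2}(1) identifies this with $H_1(V(K))\cong\Z$, so it extends uniquely to a flat abelian connection whose restriction to $S^3\setminus n(U)\subset\partial_+ V(K)$ realizes the solid-torus portion of $\bar\rho$ under the identifications from the proof of Lemma \ref{lem3.5}. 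Since this global connection is flat, its relative Pontryagin number vanishes, yielding $\cs(\branch(\doub(K)),\tilde\gamma)\equiv\cs(\surg{1}{2}{K},\bar\rho)\pmod{\Z}$.

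The main obstacle will be keeping the numerous meridian-longitude identifications consistent, since the boundary tori play different roles under the Whitehead-double decomposition and under the $N(K)$ construction; the argument is essentially bookkeeping but requires care to verify that the piecewise flat connection on $N(K)$ has precisely the claimed boundary representations.
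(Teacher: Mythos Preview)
Your argument is correct and lands on essentially the same cobordism the paper uses, but the organization differs. The paper builds the cobordism $N=([0,1]\times X_1)\cup([0,1]\times Y)\cup V$ first and then uses the single fact that abelian representations of $\pi_1(X_2)$ extend uniquely over $V$ (since both factor through $H_1(V)\cong\Z$) to get the bijection and the Chern--Simons statement simultaneously: the correspondence is literally ``extend flatly over $N$,'' so CS preservation mod $\Z$ is immediate. You instead establish the bijection by direct computation on the pieces---reading off $\alpha_1,\alpha_2$, forcing $\alpha_2=0$ from $\lambda_{K_2}\mapsto 1$, and checking $\mu_{K_1}\lambda_{K_1}^2\mapsto 1$---and only afterwards invoke the cobordism for the CS claim. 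Your route is more explicit (and the formulas you extract could be useful downstream), while the paper's is more uniform; both are valid and ultimately rest on the same mechanism.

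One small point: you cite Lemma~\ref{lem3.5} for $N(K)$, but that lemma carries the hypothesis that $K$ can be unknotted by positive-to-negative crossing changes, which is irrelevant here (you only need the cobordism and its $H_1$, not negative definiteness). Since Proposition~\ref{lem6.3} is stated for arbitrary $K$, you should either note that the construction and boundary identification in the proof of Lemma~\ref{lem3.5} go through verbatim using arbitrary crossing changes, or simply build $N$ directly from Lemma~\ref{lem3.2} as the paper does.
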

 \begin{proof}    
 Let $U\subset S^3$ denote the unknot.  Use Lemma \ref{lem3.2} to construct a 4-dimensional cobordism rel boundary  $V$ from $S^3\setminus n(K_2)$ to $S^3\setminus n(U)= S^1\times D^2$ such that the linking number $H_1(S^3\setminus n(K_2))\to \Z$ extends to an isomorphism $H_1(V)\to \Z$.  
 
 Since any abelian representation factors through  first homology, every abelian $SU(2)$ representation of $\pi_1(X_2)$ extends uniquely to $\pi_1(V)$ and, symmetrically, every (necessarily abelian)    $SU(2)$ representation of $\pi_1(S^1\times D^2)$ extends uniquely to $\pi_1(V)$.  In other words, extending over $V$ sets up a 1-1 correspondence between  abelian representations  of  $\pi_1(X_2)$ 
and  $\pi_1(S^1\times D^2)$.

Glue $[0,1]\times Y$ to $V$  to obtain a cobordism rel boundary $W$ from $Y\underset{T_2} \cup X_2$ to $Y\underset{T_2} \cup (S^1\times D^2)$
 $$W= ([0,1]\times Y )\underset{[0,1]\times T_2 } \cup V.$$   Since $\pi_1([0,1]\times Y )=\pi_1(Y)$,  extending over $W$ sets up a 1-1 correspondence between   $SU(2)$ representations of $\pi_1 (Y\cup_{T_2}X_2)$ whose restriction to $\pi_1(X_2)$ are abelian and  $SU(2)$ representations of $\pi_1(Y\cup_{T_2} (S^1\times D^2))$.   
Notice that $Y\cup_{T_2} (S^1\times D^2)$ is again a solid torus, so that any $SU(2)$ representation of 
$\pi_1(Y\cup_{T_2} (S^1\times D^2))$ is abelian, and hence  its unique extension to $\pi_1(W)$ is also abelian.

Repeat  the construction, gluing $[0,1]\times X_1 $ to $W$  to obtain a cobordism  
$$N=([0,1]\times X_1 )\underset{[0,1]\times T_1 } \cup W = ([0,1]\times X_1 ) \underset{[0,1]\times T_1 } \cup  ([0,1]\times Y ) \underset{[0,1]\times T_2 }  \cup V$$
with two boundary components,  
$$\partial N_-=-\big(X_1\underset{ T_1 } \cup  Y\underset{ T_2 } \cup X_2 \big) =-\branch (\doub(K))$$ and 
$$\partial N_+=X_1\underset{ T_1 } \cup  Y\underset{ T_2 } \cup (S^1\times D^2)= X_1\underset{ T_1 } \cup  (S^1\times D^2)=\surg{1}{2}{K}.$$
As before, extending over $N$ sets up a 1-1 correspondence between representations of $\pi_1(\branch (\doub(K)))$ whose restriction to $\pi_1(X_2)$ (and hence also $\pi_1(Y)$) are abelian and representations of $\pi_1(\surg{1}{2}{K})$.  

Every $SU(2)$ representation of $\surg{1}{2}{K} $ restricts to an abelian representation on the solid torus $S^1\times D^2$. Moreover, since $\surg{1}{2}{K} $ is a homology sphere, every non-trivial $SU(2)$ representation of $\pi_1(\surg{1}{2}{K}) $ necessarily restricts to a non-abelian representation of $\pi_1(X_1)$.
  Thus the above correspondence induces a well-defined bijection on conjugacy classes
$$ \chi_{SU(2)}(\surg{1}{2}{K} )\setminus \{\theta\}\ \ \longleftrightarrow \ \ \chi_{SU(2)}(\branch (\doub(K)))_\sss{NAA}.$$

Precisely, the correspondence is given by extending a representation over $N$, or, in the context of connections, extending   a flat connection over the cobordism $N$.  Since the Chern-Simons invariant is a flat cobordism invariant modulo $\Z$,  the correspondence preserves Chern-Simons invariants modulo $\Z$, as asserted. 
 \end{proof}

 \subsection{Whitehead doubles of torus knots}
 The Chern-Simons invariants for the 2-fold branched cover Whitehead doubles of torus knots can now be estimated.  

\begin{theorem}\label{thm3.9} Let $p,q>0$ be relatively prime, and let $T_{p,q}$ be the $(p,q)$ torus knot.\hfill

\begin{enumerate} \item  Suppose $\gamma \in \chi_{SU(2)}(\branch (\doub(T_{p,q})))_\sss{NAA}$. Then $\cs(\gamma)$ is a rational number with denominator dividing $4pq(2pq-1)$.
\item Suppose $\gamma \in \chi_{SU(2)}(\branch (\doub(T_{p,q})))_\sss{NAN}$. Then $\cs(\gamma)$ is a rational number with denominator dividing $4pq(4pq-1)$
\end{enumerate}
These statements remain true with the orientation of $\branch (\doub(T_{p,q}))$ reversed. 
Hence $$\tau(\pm \branch (\doub(T_{p,q})),\theta)\ge \frac{1}{pq(4pq-1)}.$$
\end{theorem}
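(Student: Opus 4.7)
By Corollary \ref{cor6.2}, estimating $\tau(\Sigma(D(T_{p,q})))$ amounts to bounding the SO(3) Chern-Simons invariants of non-trivial flat connections in the two subspaces $\chi_{SU(2)}(\Sigma(D(T_{p,q})))_{\sss{NAA}}$ and $\chi_{SU(2)}(\Sigma(D(T_{p,q})))_{\sss{NAN}}$, and comparing these with the trivial-connection value $4$. Since on a $\Z/2$-homology sphere one has $\cs(\gamma) = -4\,\cs(\tilde\gamma)$ with $\tilde\gamma$ the unique $SU(2)$ lift, the plan is to establish the two stated SU(2) denominator bounds, then convert via this factor of four and apply Lemma \ref{observation}.

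For part (1), I would apply Proposition \ref{lem6.3} directly: its Chern-Simons-preserving bijection (mod $\Z$) identifies $\chi_{SU(2)}(\Sigma(D(T_{p,q})))_{\sss{NAA}}$ with $\chi_{SU(2)}(S^3_{1/2}(T_{p,q}))\setminus\{\theta\}$. By Moser's theorem \cite{Moser1971}, $S^3_{1/2}(T_{p,q})\cong -\Sigma(p,q,2pq-1)$, so the question reduces to computing Chern-Simons invariants on the Brieskorn homology sphere $\Sigma(p,q,2pq-1)$. The SU(2) flat moduli space of a Seifert fibered homology sphere $\Sigma(a,b,c)$ is well understood (Fintushel-Stern; see also \cite{KK1,KK2,auckly}): non-trivial flat connections are indexed by rotation numbers on the Seifert fibration, and their Chern-Simons invariants are rationals with denominator dividing $4abc$. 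Specializing to $(a,b,c)=(p,q,2pq-1)$ yields the claimed denominator $4pq(2pq-1)$.

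Part (2) is the main obstacle, because no filling-by-solid-torus trick applies when both $\gamma|_{X_i}$ are non-abelian. The plan is to exploit the fact that $\gamma|_Y$ is abelian, hence factors through $H_1(Y)=\Z\langle\mu_{A_1}\rangle\oplus\Z\langle\mu_{A_2}\rangle$ and is determined by two peripheral eigenvalues, which must simultaneously arise as boundary values of non-abelian torus knot representations on $X_1$ and $X_2$. Using the cut-and-paste Chern-Simons formalism of Kirk-Klassen and Auckly \cite{KK1,KK2,auckly}, I would decompose $\cs(\gamma)$ along $T_1\sqcup T_2$ into contributions from $X_1$, $X_2$, and $Y$, together with path-of-connections correction terms on the tori. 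Each $X_i$ contribution lies in the Chern-Simons spectrum of non-abelian representations of $\pi_1(S^3\setminus T_{p,q})$, whose denominators divide $4pq$. The additional factor $(4pq-1)$ in the common denominator should emerge from the algebraic compatibility equation that forces the peripheral eigenvalue to be a $(4pq-1)$-th root of unity when both sides support non-abelian extensions glued across the abelian slice on $Y$. A cleaner alternative, parallel to Proposition \ref{lem6.3}, would be to construct an explicit cobordism from $\Sigma(D(T_{p,q}))$ to $S^3_{1/4}(T_{p,q})\cong -\Sigma(p,q,4pq-1)$ carrying the NAN character variety into the non-trivial part of the Brieskorn character variety while preserving CS modulo $\Z$; the appearance of $1/4$ rather than $1/2$ would reflect that both $X_i$ now contribute genuine surgery data.

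The orientation-reversed versions of (1) and (2) are immediate, since orientation reversal leaves $\chi_{SU(2)}$ unchanged and negates Chern-Simons values. Combining the two parts and dividing by $4$ to pass from SU(2) to SO(3) denominators, the non-trivial SO(3) Chern-Simons values of $\pm\Sigma(D(T_{p,q}))$ have denominators dividing $pq(2pq-1)$ or $pq(4pq-1)$. For $p,q\ge 1$ coprime these reciprocals are both strictly less than $4$. Hence Corollary \ref{cor6.2} and Lemma \ref{observation} yield
$$\tau(\pm\Sigma(D(T_{p,q})))\ \ge\ \min\!\left(\tfrac{1}{pq(2pq-1)},\tfrac{1}{pq(4pq-1)}\right)\ =\ \tfrac{1}{pq(4pq-1)},$$
as claimed. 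The technical heart of the proof lies in part (2): carefully tracking the interaction between the two non-abelian $X_i$ contributions glued across the abelian slice on $Y$, and extracting the precise denominator $4pq(4pq-1)$, is the key obstacle.
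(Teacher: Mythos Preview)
Your treatment of part (1) and the final deduction of the $\tau$-bound match the paper exactly. The gap is in part (2): you have correctly located the difficulty but have not found the mechanism that produces the factor $4pq-1$, and both of your proposed routes remain speculative. In particular, the assertion that the peripheral eigenvalues are forced to be $(4pq-1)$-th roots of unity is not justified (and is not how the denominator arises), and there is no evident flat cobordism to $S^3_{1/4}(T_{p,q})$ carrying the $NAN$ stratum --- the trick of Proposition~\ref{lem6.3} works precisely because one $X_i$ is abelian and can be replaced by a solid torus, which fails when both are non-abelian.

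The paper's argument for part (2) rests on a geometric fact you have not used: the torus knot complement $X_i$ is Seifert fibered over $D^2$, and its regular fiber $F_i=\mu_{K_i}^{pq}\lambda_{K_i}$ is central in $\pi_1(X_i)$. Any non-abelian $SU(2)$ representation therefore sends $F_i$ to $\pm1$, so the associated $SO(3)$ representation $\gamma$ kills $F_i$. This allows $\gamma|_{X_i}$ to be extended over the mapping cylinder $M_i^\circ$ of the Seifert fibration (with cone neighborhoods removed), whose new boundary components are the lens spaces $L(p,q)$ and $L(q,p)$. Gluing $M_1^\circ$, $[0,1]\times Y$, and $M_2^\circ$ produces a flat $SO(3)$ cobordism from $\Sigma(D(T_{p,q}))$ to $2L(p,q)\sqcup 2L(q,p)\sqcup \hat Y$, where $\hat Y$ is the Dehn filling of $Y$ along the curves $\mu_{A_i}^{1-2pq}\lambda_{A_i}^{pq}$. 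A direct linking-matrix computation gives $H_1(\hat Y)=\Z/(4pq-1)$, and since $\gamma|_Y$ is abelian, so is $\gamma|_{\hat Y}$. Flat cobordism invariance and the elementary fact that abelian $SO(3)$ representations have Chern--Simons denominator dividing the order of their image then yield a sum of five fractions with denominators $p,q,p,q,4pq-1$, hence an $SO(3)$ denominator dividing $pq(4pq-1)$ and an $SU(2)$ denominator dividing $4pq(4pq-1)$. This is the missing idea; the paper notes that the Kirk--Klassen splitting approach you sketch could also be carried out, but the mapping-cylinder argument is substantially cleaner.
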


\begin{proof}    Proposition \ref{lem6.3} implies that  the values of  Chern-Simons invariants modulo $\Z$ of representations $\tilde \gamma \in \chi_{SU(2)}(\branch (\doub(K)))_{NAA}$ coincide with the 
the values of Chern-Simons invariants of non-trivial representations $\tilde \gamma \in  \chi_{SU(2)}(\surg{1}{2}{K} )$.  When $K$ is a $(p,q)$-torus knot, then $\surg{1}{2}{K} = -\Sigma(p,q,2pq-1)$.  The values of Chern-Simons
invariants of flat $SU(2)$  connections on $\Sigma(p,q,2pq-1)$ are rational with denominator dividing $4pq(2pq-1)$ (\cite{FS},  \cite[Theorem 5.2]{KK1}, \cite[Lemma 3.2]{HK}).

This leaves the $NAN$ representations. The Chern-Simons invariants can be computed explicitly  using the torus decomposition  approach of \cite{KK1,KK2}.  We give an alternative and simpler argument, which is sufficient to establish the estimate.  This argument is inspired by Fintushel-Stern's method of computing Chern-Simons invariants of flat $SO(3)$ connections on Seifert-fibered homology spheres, and informed by Klassen's identification of the representation spaces of  torus knots \cite{K}.

Consider $\tilde \gamma\in  \chi_{SU(2))}(\branch (\doub(K)))_\sss{NAN}$. The restriction  of $\tilde \gamma$ to  $X_1$,  $\tilde \gamma:\pi_1(X_1)\to SU(2), $ is non-abelian.  Since   $X_1$ is the $(p,q)$ torus knot complement, it is Seifert fibered over $D^2$. The regular fiber $F_1\subset X_1$ represents a central element in $\pi_1(X_1)$, and since the centralizer of any non-abelian subgroup of $SU(2)$ is just $\pm 1$, $F_1$ is necessarily sent to $\pm 1$ by $\tilde \gamma$.  Note that  $F_1$ can be taken to lie  on $T_1$ and is represented by the loop $\mu_{K_1}^{pq}\lambda_{K_1} $. Hence $\tilde \gamma(\mu_{K_1}^{pq}\lambda_{K_1})=\pm 1$.  The same argument applied to $X_2$ shows that   $\tilde \gamma(\mu_{K_2}^{pq}\lambda_{K_2})=\pm 1$.
Therefore, the composite $${\gamma}:\pi_1(\branch (\doub(K)))\xrightarrow{\tilde \gamma}SU(2)\to SO(3)$$ sends the loops
$\mu_{K_1}^{pq}\lambda_{K_1}$ and $\mu_{K_2}^{pq}\lambda_{K_2}$ to $1$.

The mapping cylinder $M_1$ of the Seifert fibration $X_1\to D^2$ is an orbifold with two singular points: a cone on $L(p,q)$ and a cone on $L(q,p)$. The boundary of $M_1$ is the Dehn filling of $X_1$ which kills the regular fiber  $\mu_{K_1}^{pq}\lambda_{K_1}$, i.e. $ {pq}$-surgery on the $(p,q)$ torus knot. 
Let $M_1^{\circ}$ denote the complement of the neighborhoods of the cone points.   A straightforward calculation shows that the inclusion $\pi_1(X_1)\to \pi_1(M_1^{\circ})$  is surjective, with kernel generated by $F_1$.   Thus the restriction of ${\gamma}$ to $\pi_1(X_1)$ extends to $\pi_1(M_1^{\circ})$. Again, the mirror argument provides a manifold $M_2^{\circ}$ with boundary the union of $L(p,q), L(q,p)$ and the Dehn filling of $X_2$ which kills $\mu_{K_2}^{pq}\lambda_{K_2}$, over which ${\gamma}$ extends. 

Glue $M_1^{\circ}$, $[0,1]\times Y $, and $M_2^{\circ}$ together along neighborhoods $[0,1]\times T_1 $ and $[0,1]\times T_2 $ of their boundaries.  This produces a 4-manifold
$$M=M^{\circ}_1\underset{[0,1]\times T_1 }\cup  ([0,1]\times  Y ) \underset {[0,1]\times T_2 } \cup M^{\circ}_2$$
with boundary 
$$\partial M=-\branch (\doub(K))\sqcup 2L(p,q)\sqcup 2L(q,p)\sqcup  \hat{Y}$$
over which ${\gamma}$ extends.    We denote this extension again by $\gamma$:
$$\gamma:\pi_1(M)\to SO(3).$$
Here $\hat{Y}$ denotes the Dehn filling of the two components of the boundary of $Y$ in which $\mu_{K_1}^{pq}\lambda_{K_1}= \mu_{A_1}^{1-2pq}\lambda_{A_1}^{pq}$
(by Proposition \ref{2foldcover})  and $\mu_{K_2}^{pq}\lambda_{K_2}= \mu_{A_2}^{1-2pq}\lambda_{A_2}^{pq}$ are killed. In  other words, $\hat{Y}$ is obtained from the $(2,4)$ torus link by performing $\frac{1-2pq}{pq}$-surgery on each component.  

Notice that the restriction of the representation ${\gamma}$ to $\hat{Y}$ is abelian. Moreover, $H_1(\hat{Y})$ is presented by the $2\times 2$ matrix 
$$\begin{pmatrix} 1-2pq&2pq\\2pq&1-2pq
\end{pmatrix}.$$
Thus $H_1(\hat Y )=\Z/(4pq-1)$.

Chern-Simons invariants modulo $\Z$ are flat cobordism invariants, and are additive over disjoint unions, and so
 $\cs(\gamma, \branch (\doub(K)))=-4 \cs(\tilde\gamma, \branch (\doub(K)))$ is the sum of five $SO(3)$ Chern-Simons invariants: the four Chern-Simons invariants of the restriction of ${\gamma}$ to the lens spaces, and the Chern-Simons invariant of the corresponding abelian representation  of $\hat Y$.  
 
 The Chern-Simons invariant of any abelian $SO(3)$ representation with finite image on any closed 3-manifold   is a rational number with denominator the order of the image.  This can be seen by integrating the Chern-Simons form over a fundamental domain in the corresponding finite cover.  Hence  $\cs(\gamma, \branch (\doub(K)))$ is the sum of five fractions, with denominators $p,q,p,q,4pq-1$. Thus it  is a fraction with denominator dividing $pq(4pq-1)$.
 
Reversing the orientation of a 3-manifold changes the signs of  its Chern-Simons invariants.  Renormalizing to place its value in $(0,4]$ by adding $4k$ does not alter the denominator. It follows that these estimates are valid with either orientation.   The last statement is a consequence of   Corollary \ref{cor6.2}. 
\end{proof}

We can now prove our main result, which includes   Theorem 1 of the introduction.
 
 \begin{theorem}\label{mainresult}
Let $\{(p_i,q_i)\}_{i=1}^\infty$ be a sequence of relatively prime integers satisfying 
$$ {p_nq_n(2p_nq_n-1)}>{p_{n-1}q_{n-1}(4p_{n-1}q_{n-1}-1)},$$ 
 For example,  $(p_n,q_n)=(2, 2^n-1)$, or  $(p_n,q_n)=(n, nk_n-1)$ where $k_n>\sqrt{2} k_{n-1}$.

 Then the positive-clasped untwisted Whitehead doubles $\{\doub(K_{p_i,q_i})\}_{i=1}^\infty$ of the  $(p_i,q_i)$ torus knots  are independent in the smooth concordance group. 
\end{theorem}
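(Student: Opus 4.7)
The plan is to deduce the theorem from Corollary \ref{cor:indknot} applied to the family $\{D(T_{p_i,q_i})\}_{i=1}^{M}$ for each positive integer $M$, which by definition gives independence of the full infinite collection.

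For each $i$, take $W_i = W_i' = W_{p_i,q_i}$, the negative definite 4-manifold produced by Corollary \ref{prop5.1}. That corollary simultaneously provides the pair $(W_i, e_i)$ with Property \inst\ (where $-e_i^2 = \tfrac{1}{p_iq_i(2p_iq_i-1)}$) and $\cs$-partitions of both $(W_i, e_i, -e_i^2)$ and $(W_i, 0, -e_i^2)$ whose gluing boundary component is $-\Sigma(D(T_{p_i,q_i}))$. To fit the hypothesis $\partial_\gl W_i = \Sigma(K_i)$ of Corollary \ref{cor:indknot} exactly, we apply the corollary to the reverse mirrors $K_i = -D(T_{p_i,q_i})$, using that $\Sigma(-K) = -\Sigma(K)$ and that a set of knots is independent in $\mathcal{C}$ if and only if the set of its reverse mirrors is.

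It remains to verify the Chern-Simons inequality required by Corollary \ref{cor:indknot}. Theorem \ref{thm3.9} yields the uniform lower bound
$$ \tau(\pm \Sigma(D(T_{p_j,q_j}))) \geq \tfrac{1}{p_jq_j(4p_jq_j-1)}, $$
so the required inequality $-e_i^2 < \tau(\pm\Sigma(D(T_{p_j,q_j})))$ reduces to
$$ p_iq_i(2p_iq_i-1) > p_jq_j(4p_jq_j-1) \quad \text{for all } j<i. $$
The standing hypothesis of the theorem is precisely this statement when $j = i-1$. Since $4p_{i-1}q_{i-1}-1 > 2p_{i-1}q_{i-1}-1$, that same hypothesis forces the sequence $n \mapsto p_nq_n(4p_nq_n-1)$ to be monotone increasing, and a straightforward induction extends the inequality to all $j<i$. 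Verifying the two illustrative sequences is then elementary: for $(p_n,q_n) = (2, 2^n-1)$ a direct evaluation for $n \geq 2$ suffices, and for $(p_n,q_n) = (n, nk_n-1)$ with $k_n > \sqrt{2}\, k_{n-1}$ the two sides differ asymptotically by the factor $k_n^2/(2k_{n-1}^2) > 1$.

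The substantive obstacles to the argument have all been surmounted earlier in the paper: the construction of 4-manifolds with Property \inst\ bounded by $\Sigma(D(T_{p,q}))$ (Corollary \ref{prop5.1}, built from the crossing-change cobordism of Lemma \ref{lem3.5} and the Fintushel-Stern style example of Proposition \ref{truncated}), and, most substantially, the Chern-Simons denominator estimate of Theorem \ref{thm3.9} (which uses Theorem \ref{lem6.1} to reduce the analysis to $NAA$ and $NAN$ representations, Proposition \ref{lem6.3} to transport the $NAA$ case to representations of $\surg{1}{2}{T_{p,q}} = -\Sigma(p,q,2pq-1)$, and a Seifert-fibered orbifold argument for the $NAN$ case). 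With those results in hand, the main theorem reduces to the arithmetic above.
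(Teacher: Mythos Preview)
Your proof is correct and follows essentially the same route as the paper's: invoke Corollary~\ref{prop5.1} to supply the pairs $(W_i,e_i)$ with Property~\inst\ and the required $\cs$-partitions, use Theorem~\ref{thm3.9} for the Chern-Simons lower bound, reduce to the arithmetic inequality $p_iq_i(2p_iq_i-1) > p_jq_j(4p_jq_j-1)$ for $j<i$ via a monotonicity/induction argument, and conclude with Corollary~\ref{cor:indknot}. Your handling of the orientation discrepancy (that $\partial_{\gl}W_i = -\Sigma(D(T_{p_i,q_i}))$ rather than $+\Sigma$) by passing to reverse mirrors is in fact more explicit than the paper, which glosses over this point.
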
 

\begin{proof}  
Corollary \ref{prop5.1}  establishes the existence of 4-manifolds $W_i=W_{p_i,q_i}$ satisfying Property \inst \ with respect to classes $e_i\in H^2(W_i)$, with $-e_i^2= \tfrac{1}{p_iq_i(2p_iq_i-1)}$. Moreover, the boundary of $W_i$ admits a partition 
$$\partial_\gl W_i= -\Sigma(D(T_{p_i,q_i}), \ \ \partial_\gl W_i=Y_{1,i}\sqcup Y_{2,i}\sqcup Y_ {3,i}$$
which is a $\cs$-partition for {\em both} $(W_i, e_i, -e_i^2)$ and $(W_i, 0, -e_i^2)$.

 Note that 
 $$  p_nq_n(2p_nq_n-1) > p_{n-1}q_{n-1}(4p_{n-1}q_{n-1}-1) >p_{n-1}q_{n-1}(2p_{n-1}q_{n-1}-1)$$
 and hence  if $1\leq j<i$, 
 $$  p_iq_i(2p_iq_i-1) \ge p_{j+1}q_{j+1}(2p_{j+1}q_{j+1}-1)>p_{j}q_{j}(4p_{j}q_{j}-1) .$$
  Theorem \ref{thm3.9}  gives the estimate $\tau(\pm\Sigma(T_{p_j,q_j}))\ge \tfrac{1}{p_jq_j(4p_jq_j-1)}$, and therefore
$$\tau(\pm\Sigma(T_{p_j,q_j}))\ge \tfrac{1}{p_jq_i(4p_jq_j-1)}>\tfrac{1}{p_{i}q_{i}(2p_{i}q_{i}-1)}=-e_i^2.
 $$  
The result now follows from Corollary \ref{cor:indknot} \end{proof}

 \section{Comparison with other techniques}\label{compare}
 Many striking developments have occurred in the twenty-five years since instantons were first used to study cobordism theory.   In this section we view our results in the light of these advances.

We begin by pointing out that while significant progress has been made in understanding the topological concordance group,  these techniques cannot be brought to bear on the questions addressed here.  As mentioned in the introduction, the Whitehead double of any knot is topologically slice. Hence any invariant which can prove independence of Whitehead doubles in the concordance group must be manifestly smooth. 

In this direction, the past decade has seen an explosion of activity in the study of smooth concordance.  The activity has been facilitated by a wide variety of newly discovered invariants.  Broadly speaking, these invariants come in two flavors;  invariants of knots and three-manifolds 
defined in the context of Heegaard Floer homology (or several other conjecturally equivalent theories, e.g. monopole Floer homology), and invariants of knots derived from Khovanov homology and its generalizations.  The former invariants are analytically defined, while the latter are defined algebraically in the context of quantum algebra and its categorification.    Since both approaches have led to striking new applications, the reader may be surprised that instantons were the tool of choice in the problem at hand.  

A distinguishing feature of our approach is that, unlike many of the modern invariants, the instanton moduli spaces used here {\em do not} provide homomorphisms from the  concordance group to  the integers.   When using homomorphisms to $\Z$ to detect  independence in $\mathcal{C}$, one needs at least as many homomorphisms as there are knots in the set whose independence one would like to establish.  Since we consider infinite sets of Whitehead doubles, a proof of our theorem in such contexts would necessarily require the computation of an infinite number of invariants for each knot.  In contrast, our proof required essentially two computations for each knot in the family: first, the verification that  the branched double cover bounds a 4-manifold with Property \inst  \ and second, the  calculation of a lower bound for the minimal Chern-Simons invariant.   

Despite this, one may still hope that the modern invariants could prove our theorem, a possibility which we now discuss.   Let us first examine the natural approaches using Khovanov homology. In this realm one has invariants $s_n(K)$, $n=2,... \infty$  which are related to smooth 4-dimensional topology by the inequality \begin{equation}\label{eq:s} |s_n(K)|\le 2(n-1)g_4(K),\end{equation} where $g_4(K)$ is the smooth 4-ball genus of $K$, \cite{RasSlice,Wu2009,Lobb}.  Here, $s_2(K)$ is  Rasmussen's invariant \cite{RasSlice}, defined via Lee's \cite{Lee} perturbation of Khovanov homology \cite{Khovanov2000}; the invariants for $n>2$ are defined by similar techniques applied in the context of Khovanov and Rozansky's link homology theory for $sl_n$ \cite{KhovRoz}. The remarkable feature of these invariants is that the inequality is sharp for torus knots, thus proving Milnor's conjecture through purely combinatorial means.  It is expected that, like $s_2$, each invariant provides a homomorphism $s_n:\mathcal{C}\rightarrow \Z$, and one might hope that this collection of homomorphisms could be used to prove the independence of Whitehead doubles of torus knots.  Unfortunately, this approach cannot work.  The key point is that Whitehead doubles of torus knots are {\em quasipositive}.  This means they can be presented as the closure of a braid which is a product of conjugates of positive half-twists or, more geometrically, that they arise as the intersection of a smooth algebraic curve in $\C^2$ with the unit-sphere.    Equation \eqref{eq:s} is sharp  for quasipositive knots \cite{Olga2006,Shumakovitch,Wu2008}. Since the Whitehead doubles of torus knots have 4-genus equal to $1$, the collection of invariants $\{ s_n(K)\}$ would therefore be too weak to prove their independence.  

A second approach is to make use of the observation  that satellite operations induce functions $\mathcal{C}\rightarrow \mathcal{C}$.  Precomposing $s_n:\mathcal{C}\rightarrow \Z$ with such a function seems likely to contain very interesting information.  However, examining these invariants for  satellites of  connected sums of Whitehead doubles is far beyond the present computational reach.  
 
Having dispensed with the known invariants from quantum algebra, we turn to invariants defined by Floer homology.    The most well-known of these is the \os \ concordance invariant, $\tau(K)$.  This invariant shares several key features of $s_2(K)$ but is, in general, distinct from it \cite{Stau}.  By itself, however, $\tau$ can only show that Whitehead doubles of torus knots generate a free abelian subgroup of rank one, as it is a homomorphism from $\mathcal{C}$ to $\Z$.  In fact $\tau(D(T_{p,q}))=1$ for all $p,q,$ since $\tau$ satisfies a genus inequality analogous to  \eqref{eq:s}, which is also sharp for quasipositive knots \cite{SQPfiber,Livingston}.  Thus we must either find an infinite collection of additional homomorphisms or use a different approach.   

Both avenues may be pursued through the use of the ``correction terms" in \os  \ Floer homology (see \cite{AbsGrad} for a definition).  These are defined in terms of the grading and module structure on the Floer homology groups, and provide invariants  of Spin$^c$ rational homology cobordism.   As we have seen, by applying topological constructions to knots, we can obtain maps from the concordance group to various cobordism groups.  Two natural maps are provided by forming branched covers and performing Dehn surgery.  We have already seen the first map through Lemma \ref{branchslice}.  Indeed, by considering  prime power order cyclic branched covers, we obtain an infinite family of homomorphisms  $$\Sigma_{p^k}:  \mathcal{C} \rightarrow \Theta_{\Z/p},$$ where $\Sigma_{p^k}$ denotes the map which takes the concordance class of a knot to the $\Z/p$-homology cobordism class of its $p^k$-fold branched cyclic cover (here, $p$ is a prime).   One can obtain homomorphisms, $\delta_{p^k}:\mathcal{C}\rightarrow \Theta_{\Z/p} \rightarrow \Z$ by considering the correction term of a particular Spin$^c$ structure on these covers \cite{MO,Jabuka}.  Although we have not calculated these invariants (except in the case $\delta_2$, where they were calculated in \cite{MO}), we have used cobordism arguments similar to those presented here to obtain bounds which tightly constrain the behavior of $\delta_{p^k}(D(T_{r,s}))$.  Indeed, it seems likely that the essential information contained in these invariants is that of the correction term for $+1$ surgery on $T_{r,s}$.  Heuristically, the fact that the  branched cyclic covers of $D(T_{r,s})$ decompose along tori into  a standard manifold  glued to a number of copies of $S^3\setminus n(T_{p,q})$ should destroy the chance of independence of the $\delta$ invariants applied to Whitehead doubles.  This could be justified rigorously if the $(2+1)$ dimensional TQFT inherent in Floer homology were more computable.  Similar remarks should apply to the concordance invariants derived by considering $\tau$ of satellites.

On the other hand, one could break free from homomorphisms by applying the correction terms to Dehn surgery on linear combinations of Whitehead doubles of torus knots. Like the branched covering construction, Dehn surgery provides maps from the concordance group to cobordism groups.  These maps are more like those induced by satellite constructions, and are not homomorphisms.  Unfortunately, this approach fails, due to the fact that the knot Floer homology invariants of Whitehead doubles are well-understood \cite{Doubling}, together with the fact that the correction terms of Dehn surgery on a knot are determined by the knot invariants \cite{Knots}.  For instance, if one considers $D(T_{2,3})-D(T_{2,7})$, the part of the knot Floer homology complex of $D(T_{2,3})\#-D(T_{2,7})$ relevant to the correction term calculations is identical to the knot Floer homology complex of $T_{2,3}\#-T_{2,3}$, a slice knot (see \cite{HLR} for details on computing correction terms for surgeries on Whitehead doubles).  Thus any concordance information extracted from the correction terms of Dehn surgeries on $D(T_{2,3})\#-D(T_{2,7})$ would necessarily be trivial.   

Thus it appears that moduli spaces of instantons contain information obscured, or perhaps even lost, by the modern invariants.   The key fact seems to be the appearance of $SO(3)$ Chern-Simons invariants as the units of energy for instantons in the ends of the moduli spaces.   These invariants carry information about the fundamental group which is presently missing in modern Floer theories or Khovanov homology.  This also underlies the lack of an instanton-free proof of Property $P$.  For these reasons, we find further study of non-abelian gauge theory on 3- and 4-manifolds a well-motivated pursuit.  Indeed in the realm of concordance alone, the instanton obstruction provides a powerful  and complementary technique to those afforded by the Khovanov or \os \ theories.   In light of this, we feel the technique would be a valuable addition to the toolbox of anyone interested in the smooth concordance group.


\begin{thebibliography}{999}

\bibitem{Akbulutconf} Akbulut, Selman {\em unpublished observation} at the CBMS conference held at Santa Barbara, August 1983.

\bibitem{Akbulut}
Akbulut, Selman; Matveyev, Rostislav \emph{Exotic structures and adjunction
  inequality}, Turkish J. Math. \textbf{21} (1997), no.~1, 47--53.



 \bibitem{APS1} Atiyah, M. F.; Patodi, V. K.; Singer, I. M. {\em Spectral asymmetry and Riemannian geometry. I.} Math. Proc. Cambridge Philos. Soc. 77 (1975), 43--69.


 \bibitem{auckly} Auckly, David R. {\em Topological methods to compute Chern-Simons invariants.} Math. Proc. Cambridge Philos. Soc. 115 (1994), no. 2, 229--251.
 
 
 \bibitem{cassongordon}  Casson, A.J.~ \ and\ Gordon, C.~McA.~{\em Cobordism of classical knots,} in {\it \`A la recherche de la topologie perdue}, 181--199, Progr. Math., 62, Birkh\"auser, Boston, Boston, MA. 

\bibitem{Cochran-Gompf} Cochran, Tim D.; Gompf, Robert E. {\em Applications of Donaldson's theorems to classical knot concordance, homology $3$-spheres and property $P$.} Topology 27 (1988), no. 4, 495--512.


\bibitem{CHL} Cochran, Tim; Harvey, Shelly; Leidy, Constance {\em Link concordance and generalized doubling operators.} Algebr. Geom. Topol. 8 (2008), no. 3, 1593--1646.


\bibitem{COT} Cochran, Tim D.; Orr, Kent E.; Teichner, Peter {\em Structure in the classical knot concordance group.} Comment. Math. Helv. 79 (2004), no. 1, 105--123.





\bibitem{Endo} Endo, Hisaaki {\em Linear independence of topologically slice knots in the smooth cobordism group.} Topology Appl. 63 (1995), no. 3, 257--262.



 \bibitem{Freedman}  Freedman, Michael Hartley {\em The topology of 4-dimensional manifolds.} J. Differential Geom. 17 (1982), no. 3, 357--453.
 
 
 
 \bibitem{FKK}  Fine, B.; Kirk, P.; Klassen, E. {\em A local analytic splitting of the holonomy map on flat connections.} Math. Ann. 299 (1994), no. 1, 171--189.
    
\bibitem{Furuta}  Furuta, Mikio {\em Homology cobordism group of homology $3$-spheres.} Invent. Math. 100 (1990), no. 2, 339--355.
 
 
 
\bibitem{FSpseudo}  Fintushel, Ronald; Stern, Ronald J. {\em Pseudofree orbifolds.}  Ann. of Math. (2) 122 (1985), no. 2, 335--364. 
 
\bibitem{FS} Fintushel, Ronald; Stern, Ronald J. {\em Instanton homology of Seifert fibred homology three spheres.} Proc. London Math. Soc. (3) 61 (1990), no. 1, 109--137.
 
 \bibitem{FloerInst}
Floer, Andreas \emph{An instanton-invariant for {$3$}-manifolds}, Comm. Math.
  Phys. \textbf{118} (1988), no.~2, 215--240. 
 
 \bibitem{GRS}
 Grigsby, J.~Elisenda; Ruberman, Daniel;   Strle, Sa{\v{s}}o \emph{Knot
  concordance and {H}eegaard {F}loer homology invariants in branched covers},
  Geom. Topol. \textbf{12} (2008), no.~4, 2249--2275.
 
\bibitem{Doubling}
Hedden, Matthew \emph{Knot {F}loer homology of {W}hitehead doubles}, Geom.
  Topol. \textbf{11} (2007), 2277--2338. 
   
\bibitem{SQPfiber}
Hedden, Matthew \emph{Notions of positivity and the {O}zsv{\'a}th-{S}zab{\'o}
  concordance invariant}, J. Knot Theory Ramifications (To appear, available at
  http://arxiv.org/abs/math/0509499).
   
 \bibitem{Stau}
Hedden, Matthew; Ording, Philip \emph{The {O}zsv\'ath-{S}zab\'o and
  {R}asmussen concordance invariants are not equal}, Amer. J. Math.
  \textbf{130} (2008), no.~2, 441--453.
   
 \bibitem{HK} Hedden, Matthew; Kirk, Paul A.  {\em Chern-Simons invariants, SO(3) instantons, and $\Z/2$-homology cobordism.} preprint (2010).
 
 \bibitem{HLR}
Hedden, Matthew; Livingston, Charles; Ruberman, Daniel \emph{{Topologically slice
  knots with nontrivial Alexander polynomial}}, http://arxiv.org/abs/1001.1538.


 
 \bibitem{Jabuka}
Jabuka, Stanislav \emph{Concordance invariants from higher order covers},
  http://arxiv.org/abs/0809.1088.

 
 \bibitem{Kirby} {\em Problems in low-dimensional topology.} Edited by Rob Kirby. AMS/IP Stud. Adv. Math., 2.2, Geometric topology (Athens, GA, 1993), 35--473, Amer. Math. Soc., Providence, RI, 1997. 57-02
 
 \bibitem{Khovanov2000}
Khovanov, Mikhail \emph{A categorification of the {J}ones polynomial}, Duke
  Math. J. \textbf{101} (2000), no.~3, 359--426. 
 
 \bibitem{KhovRoz}
Khovanov, Mikhail; Rozansky, Lev \emph{Matrix factorizations and link homology},
  Fund. Math \textbf{199} (2008), 1--91.

 
  \bibitem{KK1} Kirk, Paul A.; Klassen, Eric P. {\em Chern-Simons invariants of $3$-manifolds and representation spaces of knot groups.} Math. Ann. 287 (1990), no. 2, 343--367.


 \bibitem{KK2} Kirk, Paul; Klassen, Eric {\em Chern-Simons invariants of $3$-manifolds decomposed along tori and the circle bundle over the representation space of $T\sp 2$.} Comm. Math. Phys. 153 (1993), no. 3, 521--557.
 
 
 \bibitem{K} Klassen, Eric Paul {\em Representations of knot groups in ${\rm SU}(2)$.} Trans. Amer. Math. Soc. 326 (1991), no. 2, 795--828.
 
 

\bibitem{Lee}
Lee, Eun~Soo \emph{An endomorphism of the {K}hovanov invariant}, Adv. Math.
  \textbf{197} (2005), no.~2, 554--586.
  
\bibitem{Litherland} Litherland, R. A.
{\em Signatures of iterated torus knots.} `Topology of low-dimensional manifolds' (Proc. Second Sussex Conf., Chelwood Gate, 1977), pp. 71--84, 
Lecture Notes in Math., 722, Springer, Berlin, 1979. 

\bibitem{Livingston} Livingston, Charles {\em Computations of the Ozsv\'ath-Szab\'o knot concordance invariant.} Geom. Topol. 8 (2004), 735--742.



\bibitem{LN2006}
Livingston, Charles;  Naik, Swatee \emph{{Ozsv{\'a}th-Szab{\'o} and Rasmussen
  invariants of doubled knots}}, Algebr. Geom. Topol. \textbf{6} (2006),
  651--657.

\bibitem{Lobb}
Lobb, Andrew \emph{A slice genus lower bound from {${\rm sl}(n)$}
  {K}hovanov-{R}ozansky homology}, Adv. Math. \textbf{222} (2009), no.~4,
  1220--1276.


\bibitem{MO}  Manolescu, Ciprian; Owens, Brendan {\em A concordance invariant from the Floer homology of double branched covers.} Int. Math. Res. Not. IMRN 2007, no. 20, Art. ID rnm077, 21 pp. 

 
\bibitem{matic} Mati\'c, Gordana {\em ${\rm SO}(3)$-connections and rational homology cobordisms.} J. Differential Geom. 28 (1988), no. 2, 277--307.



\bibitem{MMR}
Morgan, John~W.; Mrowka, Tomasz; Ruberman, Daniel \emph{The {$L^2$}-moduli
  space and a vanishing theorem for {D}onaldson polynomial invariants},
  Monographs in Geometry and Topology, II, International Press, Cambridge, MA,
  1994.

\bibitem{Moser1971}
Moser, Louise \emph{Elementary surgery along a torus knot}, Pacific J. Math.
  \textbf{38} (1971), 737--745.

\bibitem{FourBall}
Ozsv{\'a}th, Peter~S.;   Szab{\'o}, Zolt{\'a}n \emph{Knot {F}loer homology and
  the four-ball genus}, Geom. Topol. \textbf{7} (2003), 615--639.

\bibitem{AbsGrad}
Ozsv{\'a}th, Peter~S.;   Szab{\'o}, Zolt{\'a}n \emph{Absolutely graded {F}loer homologies and
  intersection forms for four-manifolds with boundary}, Adv. Math. \textbf{173}
  (2003), no.~2, 179--261.

\bibitem{Knots}
Ozsv{\'a}th, Peter~S.;   Szab{\'o}, Zolt{\'a}n  \emph{Holomorphic disks and knot invariants},
  Adv. Math. \textbf{186} (2004), no.~1, 58--116.

\bibitem{Olga2004}
Plamenevskaya, Olga \emph{{Bounds for the Thurston--Bennequin number from Floer
  homology}}, Algebr. Geom. Topol. \textbf{11} (2004), no.~4, 547--561.

\bibitem{Olga2006}
Plamenevskaya, Olga \emph{Transverse knots and {K}hovanov homology}, Math.
  Res. Lett. \textbf{13} (2006), no.~4, 571--586.

\bibitem{ruberman}  Ruberman, Daniel {\em Rational homology cobordisms of rational space forms.} Topology 27 (1988), no. 4, 401--414.

\bibitem{RasSlice}
Rasmussen, Jacob \emph{{Khovanov homology and the slice genus}}, Invent. Math.
  (To appear, available at http://arxiv.org/abs/math/0402131).


\bibitem{Rudolph} Rudolph, Lee {\em Quasipositivity as an obstruction to sliceness.} Bull. Amer. Math. Soc. (N.S.) 29 (1993), no. 1, 51--59. 

\bibitem{Rudolph1995}
Rudolph, Lee \emph{An obstruction to sliceness via contact geometry and
  ``classical" gauge theory}, Invent. Math. \textbf{119} (1995), 155--163.


\bibitem{Shumakovitch}
 Shumakovitch, Alexander~N. \emph{Rasmussen invariant, slice-{B}ennequin
  inequality, and sliceness of knots}, J. Knot Theory Ramifications \textbf{16}
  (2007), no.~10, 1403--1412. 

\bibitem{TaubesAsympPer}
 Taubes, Clifford H. \emph{Gauge theory on asymptotically periodic
  {$4$}-manifolds}, J. Differential Geom. \textbf{25} (1987), no.~3, 363--430.

\bibitem{UhlenbeckCompactness}
Uhlenbeck, Karen~K. \emph{Connections with {$L^{p}$} bounds on curvature},
  Comm. Math. Phys. \textbf{83} (1982), no.~1, 31--42. 
  
  
\bibitem{Wu2008}
Wu, Hao \emph{Braids, transversal links and the {K}hovanov-{R}ozansky theory},
  Trans. Amer. Math. Soc. \textbf{360} (2008), no.~7, 3365--3389.

\bibitem{Wu2009}
Wu, Hao \emph{On the quantum filtration of the {K}hovanov-{R}ozansky
  cohomology}, Adv. Math. \textbf{221} (2009), no.~1, 54--139. 








 \end{thebibliography}
\end{document}